\documentclass[12pt]{amsart}
\usepackage[latin1]{inputenc}
\usepackage[english]{babel}
\binoppenalty 10000
\relpenalty 10000
\usepackage[pdftex]{graphicx}
\usepackage{wrapfig}
\usepackage{pdfsync}
\usepackage{epsfig}
\usepackage{epstopdf}
\usepackage{latexsym,amssymb,amsfonts,amsmath, amscd, euscript, MnSymbol}
\usepackage{enumerate}
\usepackage{verbatim}

\textwidth 140mm
\textheight 220mm
\oddsidemargin 8mm



\newcommand{\M}{\mathbb{M}}

\newcommand{\R}{\mathbb{R}}
\newcommand{\N}{\mathbb{N}}

\newcommand{\Q}{\mathbb{Q}}

\def\Power #1 { \powerset(#1) }
\def\Bidom #1 { {\mathfrak P} (#1) }
\newcommand{\sib}{sib}

\newcommand{\Max}{Max}

\newcommand{\age}{Age}
\newcommand{\Mon}{\mathbf M}
\newcommand{\Cog}{\mathcal Cog}
\newtheorem{definition}{{\bf Definition}}[section]
\newtheorem{theorem}[definition]{{\bf Theorem}}

\newtheorem{proposition}[definition]{\noindent {\bf Proposition}}
\newtheorem{lemma}[definition]{\noindent {\bf Lemma}}

\newtheorem{claim}[definition]{\noindent {\bf Claim}}

\newtheorem{example}[definition]{\noindent {\bf Example}}

\newtheorem{problem}[definition]{\noindent {\bf Problem}}

\newtheorem{conjecture}[definition]{\noindent {\bf Conjecture}}

\def\proofref #1 {{\noindent  {\bf Proof} (#1).}\ }

\def\endproof{\hfill {\kern 6pt\penalty 500
\raise -0pt\hbox{\vrule \vbox to5pt {\hrule width 5pt
\vfill\hrule}\vrule}}}
\def\centerpicture #1 by #2 (#3){\leavevmode
        \vbox to #2{
        \hrule width #1 height 0pt depth 0pt
        \vfill
        \special{pictfile #3}}}
\baselineskip =18pt
\sloppy

\title[Siblings]{Siblings of countable cographs}

\author[G.Hahn]{Ge\v na  Hahn*} 
\address{Informatique et Recherche Op\'erationnelle, Universit\'e de
Montr\'eal, CP 6128  succursale Centre-Ville, Montr\'eal, Qu\'ebec,
H3C 3J7, Canada}
\email{hahn@iro.umontreal.ca} 
\thanks{The visit of the first author in November 2019 was partially supported by the Institut Camille Jordan of Lyon and   by NSERC of Canada Grant \# } 
\author[M.Pouzet]{Maurice Pouzet} \address{ICJ, Math\'ematiques,
Universit\'e Claude-Bernard Lyon1, 43 bd. 11 Novembre 1918, 69622
Villeurbanne Cedex, France and Mathematics \& Statistics, University of Calgary, Calgary, Alberta, T2N 1N4, Canada}

 \email{pouzet@univ-lyon1.fr }
\author[R.E. Woodrow]{Robert Woodrow**} \address {Mathematics \&
Statistics, University of Calgary, Calgary, Alberta, T2N 1N4, Canada }\thanks{***The stay of the  third author in November 2019 was supported by  LABEX MILYON (ANR-10-LABX-0070) of Universit\'e de Lyon within the program ``Investissements d'Avenir (ANR-11-IDEX-0007)" operated by the French National Research Agency (ANR)}
\email{woodrow@ucalgary.ca>}  

\date{\today}

\begin{document}

\dedicatory{\dagger Dedicated to the memory of Ivo G. Rosenberg}

\keywords{graphs, cographs, trees, equimorphy, isomorphy, well quasi ordering}
\subjclass[2000]{Partially ordered sets and lattices (06A, 06B)}


\begin{abstract} We show that every countable cograph has either one or infinitely many siblings. This answers, very  partially, a conjecture of Thomass\'e. The main tools are the notion of well quasi ordering and   the correspondence between cographs and some labelled ordered trees. 
\end{abstract} 

\maketitle


\section{Introduction} 

\subsection{Thomass\'e conjecture} A \emph{relation}  $R$ is a pair $(V, \rho)$ where $\rho$ is a subset of $V^n$ for some non negative integer $n$; this integer is the \emph{arity} of $\rho$ (which is also called an $n$-ary relation).  A \emph{sibling} of a relation $R$ is any $R'$ such that $R$ and $R'$ are embeddable in each other.  In \cite{thomasse1} (p.2,  Conjecture 2), Thomass\'e made the following:
%
\begin{conjecture} Every countable relation $R$ has  $1$, $\aleph_0$ or $2^{\aleph_0}$ siblings, these siblings counted  up to isomorphy. 
\end{conjecture}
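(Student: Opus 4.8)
The plan is to treat the conjecture as a counting problem for a single equivalence relation restricted to one equimorphy class, and to attack it with descriptive set theory. First I would normalise: fix the arity $n$ and realise every countable $n$-ary relation on the domain $\N$ as a point of the Polish space $X = 2^{\N^{n}}$. For a fixed $R \in X$, the assertion $R \preceq R'$ means that some injection $f \colon \N \to \N$ is relation-preserving and relation-reflecting; quantifying over $f$ exhibits $\preceq$ as the projection of a closed set, so it is analytic ($\Sigma^1_1$). Hence the sibling set $\mathrm{Sib}(R) = \{R' \in X : R \preceq R' \text{ and } R' \preceq R\}$ is analytic, and the number of siblings of $R$ up to isomorphy is exactly the number of classes of the isomorphism relation $\cong$ — the orbit relation of the natural $S_\infty$-action on $X$, itself analytic — restricted to $\mathrm{Sib}(R)$. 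A reformulation I would record at the outset is that every sibling is isomorphic to an induced substructure $R\restriction S$ satisfying $R \preceq R\restriction S$; so one is really counting, up to isomorphy, the \emph{self-embedding-rich} induced substructures of $R$.

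With this setup the coarse skeleton of the trichotomy is immediate from Burgess's theorem: every analytic equivalence relation on a Polish space has either at most $\aleph_0$ classes, or exactly $\aleph_1$ classes, or a perfect set of pairwise inequivalent elements and hence $2^{\aleph_0}$ classes. Applied to $\cong$ on $\mathrm{Sib}(R)$, this already places the number of siblings in $\{\text{finite}\}\cup\{\aleph_0,\aleph_1,2^{\aleph_0}\}$. The conjecture thus reduces to two independent tasks: (i) eliminate every finite value strictly above $1$ (the ``one or infinitely many'' dichotomy), and (ii) eliminate the value $\aleph_1$ whenever $\aleph_1 < 2^{\aleph_0}$.

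For (i) I would argue through the monoid of self-embeddings. If $R$ has a sibling $R\restriction S \not\cong R$ with $S \subsetneq \N$, fix an embedding $h$ of $R$ into $R\restriction S$ and interleave $h$ with the witness of non-isomorphy, producing a descending family of induced substructures whose isomorphism types ought not to stabilise. The delicate point is to certify that infinitely many of the resulting types are genuinely distinct: this is where a well-quasi-ordering or tree-decomposition input — precisely the apparatus this paper develops for cographs — is the natural engine, rather than any cardinal-arithmetic bookkeeping, since embeddability is trivial among siblings and gives no leverage on its own.

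Task (ii) is the main obstacle, and I expect it to be genuinely hard. Ruling out exactly $\aleph_1$ classes for the analytic relation $\cong \restriction \mathrm{Sib}(R)$ is exactly a topological Vaught phenomenon: the assertion that no intermediate value $\aleph_1$ occurs between $\aleph_0$ and $2^{\aleph_0}$ for the isomorphism action of $S_\infty$ on $\mathrm{Sib}(R)$ is an instance of the topological Vaught conjecture, which is open. The only honest way I see to close it in full generality is to prove that $\cong$ restricted to $\mathrm{Sib}(R)$ is \emph{smooth}, i.e. Borel reducible to equality on $\N$, which would forbid the $\aleph_1$-without-a-perfect-set case outright; equimorphy is a strong constraint — all siblings share the same age and carry the same self-embedding monoid — and one hopes this forces a Borel classification of the types. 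Establishing such smoothness in general, however, is at least as strong as topological Vaught, which is why the tractable route, pursued in the present paper, is to retreat to a structured class: for countable cographs the correspondence with labelled ordered trees supplies, via the wqo machinery, exactly the missing Borel classification, so that both (i) and the ambiguity of (ii) collapse and the dichotomy can be proved there, leaving the full trichotomy as the genuinely open core.
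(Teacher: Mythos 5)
You were asked to prove what is, in this paper, an open conjecture: the statement is Thomass\'e's conjecture itself, which the paper explicitly does \emph{not} prove. The paper's actual contribution (Theorem \ref{thm:siblings-cographs}) is the much weaker dichotomy that a countable \emph{cograph} has one or infinitely many siblings, obtained from the tree decomposition of cographs plus b.q.o.\ machinery (Laver, Thomass\'e), with the remark that only under CH does even this yield the full trichotomy for cographs. So there is no proof in the paper to match yours against, and your proposal, as you yourself concede, is not a proof either: your task (i) (excluding finite values strictly above $1$) is left to an unproved ``interleaving'' sketch whose delicate point --- certifying infinitely many distinct types --- is exactly where all the work lies, and your task (ii) (excluding exactly $\aleph_1$) is, as you correctly diagnose, at least as hard as the topological Vaught conjecture. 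Your descriptive-set-theoretic skeleton is sound as far as it goes: $\mathrm{Sib}(R)$ is analytic, isomorphy is the $S_\infty$-orbit relation, and Burgess's trichotomy for $\Sigma^1_1$ equivalence relations (applied after pulling $\cong$ back to Baire space along a continuous surjection onto the analytic set $\mathrm{Sib}(R)$, so that class counts are preserved) confines the number of siblings to $\{\text{finite}\}\cup\{\aleph_0,\aleph_1,2^{\aleph_0}\}$. But this is a framing of the problem, not progress on it; the known hard content of the conjecture is untouched.

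Beyond the overall gap, two concrete errors in the program. First, smoothness means Borel reducibility to equality on a Polish space such as $2^{\N}$, not to equality on $\N$; as you stated it, smoothness would force at most countably many siblings, which is absurd (many relations have $2^{\aleph_0}$ siblings). With the correct definition your intended inference does work, since the range of a Borel reduction into $2^{\N}$ is analytic and the perfect set property then yields $\aleph_0$ or $2^{\aleph_0}$ classes. Second, and more seriously, smoothness of $\cong$ restricted to $\mathrm{Sib}(R)$ is false in general, so ``prove smoothness in all cases'' cannot be the route: for $R$ the Rado graph, $\mathrm{Sib}(R)$ is the class of all countable graphs embedding Rado, and isomorphism of countable connected graphs Borel-reduces into it via $H\mapsto H\oplus \mathrm{Rado}$ (Rado is connected, so components must match up), making the restricted relation Borel-complete rather than smooth. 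What your approach actually requires is a Glimm--Effros-style dichotomy (smooth or perfectly many classes) for $S_\infty$-orbit relations on invariant analytic sets --- precisely the open territory you name. The paper avoids all of this by retreating to a structured class: for cographs, the correspondence with densely labelled meet-trees (Theorem \ref{thm:correspondence}) and the b.q.o.\ of labelled countable chains support a well-founded induction and an outright construction of $2^{\aleph_0}$ siblings when the decomposition tree has no least element (Theorem \ref{label:chain/antichain}). In short: your proposal is an honest and largely correct situating of the conjecture, marred by the smoothness misstatement, but it establishes nothing beyond the known Burgess bound.
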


A positive answer was given for chains in \cite{LPW}.
It is not even known if $R$ has one or infinitely many siblings, even if $R$ is a countable loopless undirected graph;  in fact, this is unsolved for countable trees  as we will see later with a conjecture of Bonato and Tardif.  

Graphs, and more generally binary structures, can be decomposed in simpler pieces via labelled trees, the bricks being the indecomposable structures. A binary relation $R:= (V, \rho)$ is \emph{indecomposable} if it has no  non-trivial module, alias "interval"; a \emph{module} of $R$ is any subset $A$ of $V$ such that for every $a,a'\in A$, $b\in V\setminus A$, the equalities 
$\rho(a,b)=\rho(a',b)$ and $\rho(b,a)=\rho(b,a')$ hold; $A$ is \emph{trivial} if $A=\emptyset$, $A = \{a\}$ for some $a\in V$, or $A=V$ (see \cite{ehrenfeucht1} for the general theory, \cite{harju-rozenberg}, \cite{ille-woodrow1, ille-woodrow2, boussairi-al} and \cite{courcelle-delhomme} for infinite binary structures). 

We  conjecture that Thomass\'e's conjecture reduces to the case of countable indecomposable structures. That is for graphs:

\begin{conjecture}  A countable graph $G$ has $1$, $\aleph_0$ or $2^{\aleph_0}$ siblings if every induced indecomposable subgraph has $1$, $\aleph_0$ or $2^{\aleph_0}$ siblings\end{conjecture}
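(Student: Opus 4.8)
The plan is to argue by induction along the \emph{modular (Gallai) decomposition tree} $T$ of the countable graph $G$. Each internal node of $T$ is obtained by writing a graph as a composition of its maximal proper modules: the quotient by that partition is either a complete graph (a \emph{series} node), an independent set (a \emph{parallel} node), or a prime, i.e. indecomposable, graph (a \emph{prime} node). The leaves of $T$ are the vertices of $G$, and every module appearing in $T$ is itself an induced subgraph of $G$; likewise a transversal picking one vertex from each block of a prime node induces a copy of the corresponding prime quotient inside $G$. In particular the hypothesis is \emph{hereditary} --- if every induced indecomposable subgraph of $G$ has $1$, $\aleph_0$ or $2^{\aleph_0}$ siblings, the same holds for every piece appearing in $T$ --- which is exactly what legitimises an induction on $T$. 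The target is to show that the sibling count of a graph is $1$ when its decomposition is ``rigid'', and otherwise jumps to $\aleph_0$ or $2^{\aleph_0}$, and that this trichotomy is preserved at each type of node.

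First I would dispose of the two \emph{transitive} node types, which by complementation are dual (series is the complement of parallel, and complementation preserves siblings). If $G=\bigoplus_{i\in I}G_i$ is the disjoint union of its connected components, then any $G'$ with $G\hookrightarrow G'\hookrightarrow G$ is again a disjoint union whose components embed into the $G_i$ and conversely; so counting siblings of $G$ reduces to two inputs, the sibling counts of the factors (supplied by induction) and the number of admissible redistributions of the indexed family $(G_i)_{i\in I}$. The latter is governed by well quasi ordering: if the family of isomorphism types of components is a well quasi order with no infinite antichain and no type of infinite multiplicity, the configuration is finite and rigid, giving a single sibling, whereas an infinite antichain or an infinitely repeated factor is easily parlayed into $\aleph_0$ or $2^{\aleph_0}$ siblings. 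The solved case of chains in \cite{LPW} handles the ordering constraints that arise when $I$ carries extra structure.

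The delicate case, and the true obstacle, is a \emph{prime} node, where the hypothesis must be used in earnest. Here $G=H[\,G_v:v\in V(H)\,]$ is a substitution of graphs into the vertices of a prime graph $H$, and by the hereditary remark $H$ has $1$, $\aleph_0$ or $2^{\aleph_0}$ siblings. One wants to prove a \emph{rigidity} statement: every sibling $G'$ of $G$ can be written as $H'[\,G'_w\,]$ with $H'$ a sibling of $H$ and each $G'_w$ a sibling of one of the $G_v$, so that the sibling count of $G$ factors through the already-controlled counts of $H$ and of the modules. The difficulty is that a mutual embedding between $G$ and $G'$ need not respect modular structure: unlike the finite setting, where uniqueness of the Gallai decomposition forces such compatibility, a countable embedding may spread a single prime quotient across incompatible modules of the target, and limit phenomena along infinite branches of $T$ can destroy the clean substitution form. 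Proving that embeddings can always be corrected so as to carry prime quotients to prime quotients and modules to modules --- and coupling this with a well quasi ordering of the labelled decomposition trees that bounds the number of legitimate reassemblies --- is the crux of the whole reduction. The cograph theorem established in this paper is precisely the special case in which $T$ has no prime node at all, so that only the transitive analysis of the previous step is required.
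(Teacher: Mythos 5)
The first thing to note is that the statement you set out to prove is stated in the paper as a \emph{conjecture}: the paper offers no proof of it. It only establishes the special case of cographs, and even there only in the weakened form ``one or infinitely many siblings'' (Theorem \ref{thm:siblings-cographs}), the full trichotomy $1$, $\aleph_0$, $2^{\aleph_0}$ being obtained only under CH. So your proposal cannot be measured against a proof in the paper; it must stand on its own, and it does not: you yourself concede that the prime-node rigidity statement --- correcting arbitrary mutual embeddings so that they carry prime quotients to prime quotients and modules to modules --- is ``the crux of the whole reduction'', and you give no argument for it. A plan that names the crux and leaves it open is not a proof of an open conjecture.

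Beyond that, two of your supporting claims fail in ways that matter. First, your induction scheme is not available: for a countable graph the modular decomposition tree can have no root, infinite branches and limit (non-robust) nodes, so ``induction along $T$'' is not well-founded. The paper's induction (in the cograph case) is instead well-founded induction on a quasi-order refining embeddability, and this is legitimate only because countable cographs are b.q.o.\ by Thomass\'e's theorem; for arbitrary countable graphs embeddability is not w.q.o., so even this substitute is missing, and your appeal to ``well quasi ordering of the labelled decomposition trees'' assumes exactly what would need to be proved. Second, your closing claim that the cograph theorem ``is precisely the special case in which $T$ has no prime node at all, so that only the transitive analysis of the previous step is required'' is false. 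Cographs are indeed the graphs with no prime node, but a countably infinite connected cograph with connected complement is neither a series nor a parallel composition at the top: by case (3) of Theorem \ref{decomposition-cographs} it is a sum over a densely labelled chain with no least element, and handling this limit case is the bulk of the paper (Theorem \ref{label:chain/antichain} and Lemma \ref{lem:main}), requiring Laver's b.q.o.\ theory of labelled chains and a delicate construction of $2^{\aleph_0}$ pairwise non-isomorphic siblings --- nothing like your redistribution-of-components analysis. Relatedly, even in the genuine disjoint-union case, your assertion that an infinite antichain or an infinitely repeated factor is ``easily parlayed into $\aleph_0$ or $2^{\aleph_0}$ siblings'' glosses over exactly the distinction ($\aleph_0$ versus $2^{\aleph_0}$) that the paper states it cannot yet make without CH.
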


This paper is a contribution in that direction. 

If our conjecture holds, then Thomass\'e's conjecture  must hold  for countable cographs. A \emph{cograph} is a graph with no induced subgraph isomorphic to a $P_4$, a path on four vertices. As it is well known, no induced subgraph of a cograph with more than two vertices can be indecomposable (see \cite{sumner} for finite graphs,  \cite{kelly} for infinite graphs).

We prove a weaker version of Thomass\'e's conjecture:
\begin{theorem} \label{thm:siblings-cographs}A countable cograph has  either one or infinitely many siblings.
\end{theorem}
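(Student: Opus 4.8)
The plan is to pass from cographs to their labelled decomposition trees and to run the argument there, using that finite cographs are well quasi ordered under embeddability. Recall that a countable cograph $G$ is coded by a tree $\tau(G)$ whose internal nodes alternately carry the labels ``parallel'' ($\sqcup$) and ``series'' (join) and whose leaves are the vertices; passing to the complement swaps the two labels, so by complementation it suffices to treat the case where the root of $\tau(G)$ is a parallel node. In that case $G = \bigsqcup_{i \in I} C_i$ is the disjoint union of its connected components, each $C_i$ co-connected (series- or leaf-rooted), and we record $G$ by the multiset of isomorphism types of the $C_i$ together with their multiplicities $m_t \in \{1, 2, \dots, \aleph_0\}$. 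Every self-embedding of $G$ carries a connected subgraph into a single component, so embeddings between siblings are governed by embeddings between components and by how multiplicities are matched; this is exactly where the tree coding makes the combinatorics manageable.

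First I would record the elementary reduction: if $H$ is a sibling of $G$ with $H \not\cong G$, witnessed by embeddings $f \colon G \hookrightarrow H$ and $g \colon H \hookrightarrow G$, then $\phi := g \circ f$ is a self-embedding of $G$ that is not surjective, for otherwise $g$ would be a surjective embedding, hence an isomorphism $H \cong G$. Thus it suffices to prove: if $G$ admits a proper self-embedding whose image omits a whole connected component (after suitable bookkeeping), then $G$ has infinitely many siblings, while if no extra sibling exists there is exactly one. The role of well quasi ordering enters here: because the age of $G$ consists of finite cographs and finite cographs are wqo under the induced-subgraph (embeddability) order --- the labelled version of Kruskal's theorem applied to cotrees --- the family of component types occurring in $G$ contains no infinite antichain and no infinite strictly descending chain. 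Consequently the omitted part of a proper self-embedding can be localized to finitely much data: some component type $D$ whose multiplicity is ``not forced,'' i.e.\ can be altered without leaving the equimorphy class.

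The heart of the argument is then a multiplicity-variation construction. Given the type $D$ singled out above, I would form for each $k \in \N \cup \{\aleph_0\}$ the cograph $G_k$ obtained from $G$ by changing the multiplicity of $D$ to $k$ (adjusting, if necessary, finitely many multiplicities of types embeddable into $D$, as permitted by wqo), and show that (i) each $G_k$ is a sibling of $G$ --- mutual embeddability follows because $D$, or a type absorbing it, already occurs infinitely often, so finitely many copies can be added or removed without affecting embeddability in either direction --- and (ii) the $G_k$ are pairwise non-isomorphic, which is read off from the invariant ``number of components isomorphic to $D$,'' an isomorphism invariant taking the distinct values $k$. This yields infinitely many siblings. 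The complementation symmetry disposes of the series-rooted case, and an induction on the rank of $\tau(G)$ (or a direct appeal to wqo to bound the relevant depth) reduces the general situation to the component level just described.

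The step I expect to be the main obstacle is isolating the ``unforced'' multiplicity and proving the non-isomorphism in (ii) robustly: a priori, changing the multiplicity of $D$ could be compensated by the infinitely many components of other types, so that $G_k \cong G_{k'}$ for $k \ne k'$ --- exactly the phenomenon that keeps $K_\omega$ rigid despite its non-surjective self-embeddings. Ruling this out requires the wqo hypothesis essentially: it guarantees that among the component types there is a \emph{minimal} type $D$ realizing the non-surjectivity, for which the count of components in the up-set of types determined by $D$ is a genuine isomorphism invariant separating the $G_k$. Making this invariant precise, and checking that the existence of the extra sibling $H$ forces such a minimal unforced $D$ to exist, is the crux; once it is in place, the dichotomy ``one or infinitely many'' follows, and the series case together with the recursion through the tree are routine.
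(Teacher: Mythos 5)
There is a genuine gap, and it is exactly where the paper's real work lies. Your whole argument rests on the reduction ``by complementation it suffices to treat the case where the root of $\tau(G)$ is a parallel node,'' i.e.\ on the assumption that every countable cograph with at least two vertices has either itself or its complement disconnected. That is true for \emph{finite} cographs but false for infinite ones: by Theorem \ref{decomposition-cographs}, a countably infinite cograph may fall into a third case, where the decomposition tree has \emph{no least element} at all and $G$ is the sum $\Sigma C$ of a reduced, densely labelled chain with no first element. In that case both $G$ and $G^c$ are connected, there is no ``root label'' to complement away, and there is no multiset of connected components to vary. This is not a marginal case: it is the case the paper devotes most of its machinery to, proving (Theorem \ref{label:chain/antichain}) that such a cograph has $2^{\aleph_0}$ siblings, via Lemmas \ref{lem:initialsegment}--\ref{lemma:equimorphy} on reduced labelled chains, left-indecomposable initial segments, and Laver's theorem that countable chains labelled by a b.q.o.\ are b.q.o. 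Your fallback of ``induction on the rank of $\tau(G)$'' cannot rescue this either, since in precisely this case the tree is not well founded and has no rank; the paper instead does induction on the well-founded quasi-order $G'\preceq G$ (meaning $G'\leq G$ or $G'\leq G^c$), whose well-foundedness comes from Thomass\'e's theorem.

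Two further points, both tied to the same misjudgment of where the difficulty sits. First, your w.q.o.\ input is Kruskal-type w.q.o.\ of \emph{finite} cographs, but the connected components of a countable cograph are themselves countably infinite cographs; to conclude that the component types contain no infinite antichain or descending chain (and to run any induction on components) you need Thomass\'e's theorem that \emph{countable} cographs are w.q.o.\ (in fact b.q.o.) under embeddability --- a far deeper statement, and the one the paper actually invokes, e.g.\ in Subcase 2.2.1 of the proof of Theorem \ref{thm:connected-siblings} to extract an increasing chain among infinitely many non-trivial components. Second, the step you yourself flag as the crux --- isolating a minimal ``unforced'' multiplicity and showing the modified graphs $G_k$ are pairwise non-isomorphic, while avoiding the $K_\omega$ phenomenon --- is left unproved in your proposal; in the paper this is precisely the content of Lemma \ref{lem:onesiblingdirectsum} together with Lemmas \ref{lem:more siblings} and \ref{lem:increasing chain} (summarized in Proposition \ref{summary}), and it requires as induction hypothesis that each component has one or infinitely many \emph{connected} siblings, which is why the paper proves the connected-sibling statement (Theorem \ref{thm:connected-siblings}) rather than the sibling dichotomy directly. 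So even in the disconnected case your outline is an unfinished sketch of the paper's easier half, and the harder half --- the rootless, chain-indexed case --- is absent entirely.
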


With the continuum hypothesis (CH), this says that  a cograph with countably many vertices has one, $\aleph_0$  or $2^{\aleph_0}$ many siblings. We do hope to get rid of (CH) in a forthcoming publication.

\subsection{Connected siblings and the conjectures of Bonato, Bruhn, Diestel, Spr\"ussel and Tardif}

Indecomposable graphs with more than two vertices must be connected, hence a special consequence of our conjecture is the following fact, observed  in \cite{gagnon-hahn-woodrow}.

\begin{proposition}\label{prop:thomasse}

 If Thomasse's conjecture holds for connected graphs, it holds for all graphs. 

\end{proposition}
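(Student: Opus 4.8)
The plan is to prove Proposition~\ref{prop:thomasse} by reducing the sibling analysis of an arbitrary countable graph to that of its connected pieces, using complementation to handle the non-connected but ``co-connected'' case. The key structural observation is the following dichotomy for a graph $G$ on vertex set $V$: either $G$ is connected, or $G$ is disconnected, in which case its complement $\overline{G}$ is connected. Since a bijection $f\colon V\to V'$ is an embedding of $G$ into $G'$ if and only if it is an embedding of $\overline{G}$ into $\overline{G'}$, two graphs are siblings exactly when their complements are siblings, and the number of siblings of $G$ (up to isomorphy) equals the number of siblings of $\overline{G}$. So it suffices to treat the connected case, and the disconnected case follows by passing to complements provided the complement is connected.

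\medskip

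First I would dispose of the easy reductions. If $G$ is connected, there is nothing to prove: the hypothesis applies directly. If $G$ is disconnected but $\overline{G}$ is connected, then by the complementation bijection just described, $G$ and $\overline{G}$ have the same number of siblings; applying the connected-case hypothesis to $\overline{G}$ gives that $\overline{G}$ has $1$, $\aleph_0$, or $2^{\aleph_0}$ siblings, hence so does $G$. The only remaining configuration is the one in which \emph{both} $G$ and $\overline{G}$ are disconnected. Here I would try to reduce to a connected component. A disconnected graph $G$ is a disjoint union $G=\bigsqcup_{i\in I} C_i$ of its connected components, and an embedding of $G$ into another graph must carry each $C_i$ into a single component of the target (since connectedness is preserved by embeddings along a path-witness). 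The hoped-for mechanism is that the sibling count of the disjoint union is controlled by the sibling counts of the individual components together with the combinatorics of how components may be redistributed.

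\medskip

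The hard part will be precisely this last configuration, where neither $G$ nor its complement is connected, because then one must understand how siblings of a disjoint union interact. The obstacle is that a sibling of $G=\bigsqcup_i C_i$ need not split as a disjoint union of siblings of the $C_i$ individually: an embedding may fold several components of $G$ into one component of $G'$, or may realize a component of $G'$ using infinitely many components of $G$. Thus the naive hope that $\operatorname{sib}(\bigsqcup_i C_i)=\prod_i \operatorname{sib}(C_i)$ fails, and one needs a careful accounting. The strategy I would pursue is a trichotomy argument: show that if some connected component $C_i$ already has $\aleph_0$ or $2^{\aleph_0}$ siblings, then $G$ inherits at least that many (by varying $C_i$ inside its sibling class while leaving the other components fixed, and checking these yield pairwise non-isomorphic siblings of $G$); whereas if every component has exactly one sibling (i.e.\ each $C_i$ is \emph{alone}, isomorphically rigid under equimorphy), then the siblings of $G$ are governed by the multiset structure of the component-isomorphism-types and their multiplicities, a purely combinatorial datum that can be shown to admit $1$, $\aleph_0$, or $2^{\aleph_0}$ siblings. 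I expect the delicate point to be verifying that perturbations of a single component produce genuinely distinct siblings of the whole graph and that no collapsing of isomorphism types occurs; this is where the observation from \cite{gagnon-hahn-woodrow} does the real work.
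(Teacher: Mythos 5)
Your first two reductions already constitute the entire proof, and they coincide exactly with the paper's: if $G$ is connected, apply the hypothesis directly; if $G$ is disconnected, then $G^c$ is connected, and since a map is an embedding of $G$ into $G'$ iff it is an embedding of $G^c$ into $G'^c$, complementation is a bijection between sibling classes, so $\sib(G)=\sib(G^c)$ and the hypothesis applied to $G^c$ finishes.

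Where you go astray is in what follows. You declare that ``the only remaining configuration'' is the one in which both $G$ and $\overline{G}$ are disconnected, call it the hard part, and devote most of the proposal to an unexecuted strategy for it. That configuration is empty, by the very dichotomy you stated as your key observation: a disconnected graph has a connected complement. (Concretely: if $u,v$ lie in different components of $G$ they are adjacent in $\overline{G}$; if they lie in the same component, any vertex $w$ of another component is adjacent in $\overline{G}$ to both, giving the path $u,w,v$. So $\overline{G}$ is connected, of diameter at most $2$.) Hence there is no third case; your case analysis was exhaustive after the two easy reductions, and all the subsequent discussion of disjoint unions, folding of components, and the trichotomy over component sibling counts concerns no graph whatsoever. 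The net effect is that you present as incomplete --- with the ``delicate point'' still to be verified --- a proof you had in fact already finished. The questions you raise about siblings of disjoint unions are genuine, but in the paper they arise elsewhere: in Lemmas \ref{lem:onesiblingdirectsum}, \ref{lem:more siblings}, \ref{lem:increasing chain} and Proposition \ref{summary}, which are applied in the proof of Theorem \ref{thm:connected-siblings} to the disconnected complement of a connected cograph, not in the proof of Proposition \ref{prop:thomasse}.
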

 \begin{proof}
 Suppose that Thomass\'e conjecture holds for connected graphs.  Let  $G$ be a countable  graph.  If  $G$ is connected, then  it has one, $\aleph_0$  or $2^{\aleph_0}$ many siblings. If $G$ is not connected, then  its  complement $G^{c}$ is connected and thus has one, $\aleph_0$  or $2^{\aleph_0}$ many siblings. Since their complements are siblings of $G$,   $G$  has one, $\aleph_0$  or $2^{\aleph_0}$ many siblings. 
\end{proof}

The study of the number of siblings  of a direct sum of connected graphs relates to a conjecture of Bonato and Tardif about trees, not ordered trees but connected graphs with no cycles.   
The \emph{tree alternative property} holds for  a tree $T$,  if either every tree  equimorphic to $T$ is  isomorphic to $T$ or there  are infinitely many pairwise non-isomorphic trees which are equimorphic to $T$. Bonato and Tardif \cite {Bo-Ta} conjectured that the tree alternative property holds for every tree and proved that it holds for rayless trees \cite{Bo-Ta}. Laflamme, Pouzet, Sauer  \cite{La-Po-Sa} proved that it holds for scattered trees (trees in which  no subdivision of the binary tree can be embedded). But they could not conclude in the case of the complete ternary tree with some leaves attached. As it turns out, induced subgraphs equimorphic to these trees  are connected; hence for such trees the tree alternative property amounts to the fact that every tree has one or infinitely many siblings.

The Bonato-Tardif conjecture was extended to (undirected loopless) graphs by Bonato et al,  (2011)\cite{Bo-al} in the following ways:
\begin{enumerate}
\item  For every connected graph $G$ the number $\sib_{conn}(G)$ of  connected graphs which are equimorphic to $G$ is $1$ or is infinite. 

\item For every  graph $G$ the number $\sib(G)$  of   graphs which are equimorphic to $G$ is $1$ or is infinite. 
 \end{enumerate}
 The second conjecture is a weakening  of Thomasse's conjecture restricted to  graphs. 
Both conjectures were proved true for  rayless graphs by  Bonato et al   (2011)\cite{Bo-al}.

Note  that the extension of  the  first conjecture of Bonato et  al  to binary relations is false. In fact it is false for undirected graphs with loops and for ordered sets. 

\begin{center}
\begin{figure}[ht]
\includegraphics[width=4in]{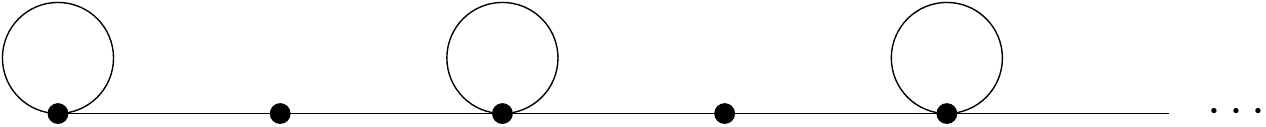}
\caption{A path with loops}
\includegraphics[width=4in]{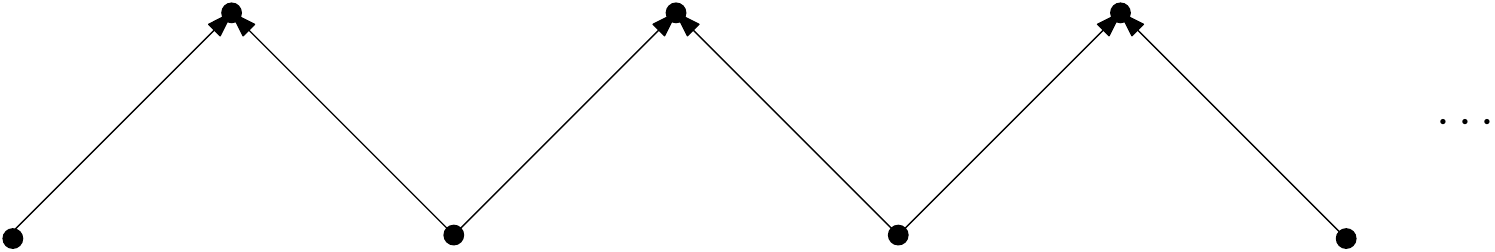}
 \caption{A fence}
 \end{figure}
 \label{Examples}
\end{center}

There is a straightforward relationship between 
the extension of the Bonato-Tardif conjecture to connected graphs and  the weakening of  Thomasse's conjecture  for graphs. 

To see this,  first observe that:
\begin{lemma}\label{lem:disconnected1} If some sibling of a connected graph $G$ is not connected, then $\sib(G)$ is infinite.
\end{lemma}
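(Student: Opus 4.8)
The plan is to first locate a \emph{connected} sibling of $G$ sitting inside the given disconnected sibling, and then to manufacture infinitely many pairwise non-isomorphic siblings by adjoining a growing number of disjoint copies of a single connected component. Concretely, let $H$ be a disconnected sibling of $G$ and fix embeddings $g\colon G\hookrightarrow H$ and $f\colon H\hookrightarrow G$. Since $G$ is connected, the image $g(G)$ is connected and hence lies in a single connected component $C$ of $H$, so $g$ restricts to an embedding $G\hookrightarrow C$. As $C\hookrightarrow H\hookrightarrow G$ (inclusion followed by $f$), the component $C$ is itself a connected sibling of $G$. Because $H$ is disconnected we may write $H=C\sqcup D$ with $D\neq\emptyset$, and I would fix any connected component $E$ of $D$. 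The crucial observation is that $\phi:=g\circ f$ embeds $H=C\sqcup D$ into $C$; restricting $\phi$ gives an embedding $\alpha\colon C\sqcup E\hookrightarrow C$ whose two parts are disjoint and mutually non-adjacent inside $C$, since $C$ and $E$ are distinct components of $H$ and embeddings preserve non-edges.

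Next I would introduce the family $F_n:=C\sqcup E^{(n)}$, i.e. $C$ together with $n$ pairwise disjoint copies of $E$, and show that each $F_n$ is a sibling of $G$. One direction is immediate, since $G\hookrightarrow C\hookrightarrow F_n$. For the reverse direction I would iterate $\alpha$. Write $\beta\colon C\hookrightarrow C$ for the restriction of $\alpha$ to the $C$-part, and $\varepsilon(E):=\alpha(E)$, so that $\beta(C)$ is disjoint from and non-adjacent to $\varepsilon(E)$. I would then place the $n$ copies of $E$ at $\varepsilon(E),\beta(\varepsilon(E)),\dots,\beta^{n-1}(\varepsilon(E))$ and the central copy of $C$ at $\beta^{n}(C)$. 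Since $\beta$ preserves both adjacency and non-adjacency, an easy induction shows that these $n+1$ pieces are pairwise disjoint and mutually non-adjacent inside $C$, whence $C\sqcup E^{(n)}\hookrightarrow C\hookrightarrow G$. Thus $F_n$ is equimorphic to $G$ for every $n$.

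Finally, because $C$ and $E$ are connected and $F_n$ is by construction the disjoint union of $C$ with $n$ copies of $E$, the graph $F_n$ has exactly $n+1$ connected components. The number of connected components is an isomorphism invariant, so the $F_n$ are pairwise non-isomorphic and $\sib(G)$ is infinite. I expect the main obstacle to be precisely the iteration step: the naive attempt to attach copies of the whole remainder $D$ can fail to produce new isomorphism types (for instance when $H\cong H\sqcup D$), so it is essential (i) to work with a \emph{single} component $E$, so that the component count strictly increases, and (ii) to track disjointness and non-adjacency carefully through the repeated application of the self-embedding $\beta$, guaranteeing that each $F_n$ genuinely embeds back into $C$ and hence into $G$.
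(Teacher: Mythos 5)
Your proof is correct and takes essentially the same route as the paper: the paper observes that $G$ is equimorphic to $G\oplus 1$ and then shows $G\oplus \overline K_n$ is a sibling for every $n$, which is exactly your iteration argument with a single isolated vertex playing the role of your component $E$, the siblings again being distinguished by their number of connected components. Your variant (carrying a whole component $E$ and working inside the connected sibling $C$ rather than $G$ itself) is sound but slightly heavier than necessary.
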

Indeed,  $G$ is equimorphic to the direct sum $G\oplus 1$. In this case, $G\oplus \overline K_n$, where $\overline K_n$ is an independent set of size $n$, and $n$ any positive integer, is equimorphic to $G$, hence $G$ has infinitely many siblings.
Now, we have:
\begin{proposition}\label{prop:BT-T} If the extension of the Bonato-Tardif conjecture to connected graphs is true  then the weakening  of   Thomasse's conjecture  for graphs is true. 
\end{proposition}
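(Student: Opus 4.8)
The plan is to mirror the structure of the proof of Proposition \ref{prop:thomasse}, splitting on whether $G$ is connected and using complementation to reduce the disconnected case to the connected one, but now feeding in the assumed connected Bonato-Tardif conjecture together with Lemma \ref{lem:disconnected1} in place of the full trichotomy.

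First I would treat the case where $G$ is connected. Here there are two subcases. If $G$ has a sibling that is not connected, then Lemma \ref{lem:disconnected1} immediately gives that $\sib(G)$ is infinite. Otherwise every sibling of $G$ is connected, so $\sib(G)=\sib_{conn}(G)$, and the assumed extension of the Bonato-Tardif conjecture to connected graphs tells us this number is $1$ or infinite. Thus in either subcase $\sib(G)$ is $1$ or infinite.

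Next I would reduce the disconnected case to the connected one via complementation. The key observation is that, for undirected loopless graphs, a map is an embedding of $G$ into $H$ if and only if it is an embedding of $G^{c}$ into $H^{c}$, since passing to complements interchanges edges and non-edges while preserving the induced-subgraph relation. Consequently $H\mapsto H^{c}$ is a bijection between the siblings of $G$ and the siblings of $G^{c}$, whence $\sib(G)=\sib(G^{c})$. When $G$ is disconnected its complement $G^{c}$ is connected (any two vertices in the same component of $G$ have a common non-neighbour in another component, giving a path of length two in $G^{c}$), so by the case already handled $\sib(G^{c})$ is $1$ or infinite, and therefore so is $\sib(G)$.

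The argument is essentially bookkeeping once Lemma \ref{lem:disconnected1} is available, so I do not expect a genuine obstacle. The only point requiring a moment of care is the claimed bijection between the siblings of $G$ and those of $G^{c}$: one must check that complementation respects isomorphism classes, so that distinct siblings are sent to distinct siblings, and that it respects equimorphy in both directions. Both hold precisely because embeddings of induced subgraphs are preserved, and reflected, under complementation.
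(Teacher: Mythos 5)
Your proof is correct and follows essentially the same route as the paper: the connected case is handled by splitting on whether some sibling is disconnected (Lemma \ref{lem:disconnected1}) or all siblings are connected (the assumed Bonato--Tardif extension), and the disconnected case is reduced to the connected one by complementation. The only cosmetic difference is that you write out the complementation bijection explicitly, whereas the paper delegates that step to Proposition \ref{prop:thomasse}, whose proof is exactly your argument.
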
  

\begin{proof}  According to Proposition \ref{prop:thomasse},   a graph   has one or infinitely many siblings provided that all connected graphs have one of infinitely many siblings. Let $G$ be  a connected graph. 
If all siblings of $G$ are connected, apply the extension of the Bonato-Tardif conjecture: $G$ has one or infinitely many siblings. If some sibling is not connected, apply Lemma \ref{lem:disconnected1}.\end{proof}

In  November 2016, M.H. Shekarriz sent  us  a paper in which he proves the same result.
 
We prove that the extension of the Bonato-Tardif conjecture to countable connected cographs holds:
\begin{theorem}\label{thm:connected-siblings}
A countable connected cograph has either one connected sibling or infinitely many connected siblings. 
\end{theorem}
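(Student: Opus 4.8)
The plan is to reduce the statement, via complementation, to a question about disconnected cographs, and then to settle that question by a dichotomy on disjoint unions of connected components driven by well quasi ordering.

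\textbf{Step 1 (Complementation).} The map $X \mapsto \overline X$ is an involution of the class of countable cographs (the complement of a $P_4$ is again a $P_4$) that reverses the induced-subgraph relation: $H \hookrightarrow X$ if and only if $\overline H \hookrightarrow \overline X$. Hence it preserves both equimorphy and isomorphy, and---since for a cograph on at least two vertices exactly one of $X$, $\overline X$ is connected---it interchanges connected and disconnected cographs. Consequently $X \mapsto \overline X$ induces a bijection between the connected siblings of a connected cograph $G$ and the disconnected siblings of $\overline G$ (the one-vertex case being trivial, as $K_1$ has a single, connected, sibling). Thus Theorem~\ref{thm:connected-siblings} is equivalent to: every countable disconnected cograph has either one disconnected sibling or infinitely many disconnected siblings.

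\textbf{Step 2 (The disjoint-union picture).} Write the disconnected cograph as $D = \bigoplus_{i \in I} C_i$, its decomposition into connected components, each $C_i$ a connected cograph. Any induced embedding of a disjoint union of connected graphs carries each component into a single component of the target; hence an embedding $\bigoplus_i C_i \hookrightarrow \bigoplus_j C'_j$ is precisely a map $f\colon I \to J$ together with, for each $j$, an induced embedding of $\bigoplus_{i\in f^{-1}(j)} C_i$ into the connected cograph $C'_j$. In particular a disconnected sibling of $D$ is determined up to isomorphism by the multiset of isomorphism types of its components, and two such multisets give equimorphic cographs exactly when each embeds into the other in this componentwise sense. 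This recasts the problem as counting the multisets of connected cographs equimorphic to the fixed multiset $\{\,C_i : i \in I\,\}$.

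\textbf{Step 3 (The dichotomy).} Suppose $D$ has a disconnected sibling $D' \not\cong D$; the aim is to produce infinitely many. Composing embeddings $D \hookrightarrow D' \hookrightarrow D$ yields a self-embedding $e$ of $D$ whose image $e(D)$ is equimorphic to $D$ but, since $D \not\cong D'$, is a \emph{proper} induced subgraph. Reading $e$ componentwise as in Step 2 records how components fold into one another and exhibits genuine slack. The engine is then a pumping/absorption argument: either some component type occurs with infinite multiplicity and acts as an absorber, so that inserting or deleting copies of well-chosen components yields infinitely many pairwise non-isomorphic disconnected siblings; or the slack furnished by $e$ can be iterated to realize, for each $n$, a distinct disconnected cograph still equimorphic to $D$. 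Non-isomorphism of the resulting family is detected by a multiplicity (counting) invariant, disconnectedness is preserved by always retaining at least two components, and the behaviour of the connected components themselves is controlled by Theorem~\ref{thm:siblings-cographs} together with the complemented form of Lemma~\ref{lem:disconnected1}.

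\textbf{Step 4 (Where the difficulty lies).} The genuine obstacle is the bookkeeping of Step 3 when there are infinitely many distinct component types and when the cotree of $D$ has infinite depth, so that no naive induction on cotree height is available. This is exactly where well quasi ordering must be invoked: one shows that the relevant family of connected cographs is well quasi ordered under embeddability, which simultaneously rules out the infinite antichains that would otherwise defeat the counting invariant and supplies a well-founded ranking in place of height, so that the maximal component types behave finitely and the pumping construction terminates appropriately. Marrying this wqo control of component types with the absorption/iteration step is the crux; from it the clean dichotomy, that the number of disconnected siblings of $D$ lies in $\{1,\infty\}$, and hence Theorem~\ref{thm:connected-siblings}, follows.
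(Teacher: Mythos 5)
There is a genuine gap, and it sits at the very first step. Your reduction rests on the claim that for a cograph on at least two vertices exactly one of $X$, $\overline X$ is connected. That is true for \emph{finite} cographs but false for countably infinite ones: a cograph arising as the sum $\Sigma C$ of a reduced, densely labelled chain $C$ with no least element (case (3) of Theorem \ref{decomposition-cographs}) is connected and its complement is connected as well. Consequently complementation does not interchange connected and disconnected cographs, the claimed bijection between connected siblings of $G$ and disconnected siblings of $\overline G$ fails, and your argument never addresses the class of cographs for which $G$ and $G^c$ are both connected. That class is precisely where the substance of the paper lies: for such $G$ the decomposition tree has no least element, and Theorem \ref{label:chain/antichain} --- proved via the tree decomposition, Lemma \ref{lem:initialsegment}, Lemma \ref{lem:main}, and the b.q.o.\ theory of countable labelled chains --- shows that $G$ has $2^{\aleph_0}$ siblings with the same property. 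Nothing in your pumping/absorption scheme can produce these: components and multisets are the wrong coordinates for such graphs, which admit no nontrivial splitting into connected summands on either side.

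A second, independent defect is circularity. In Step 3 you control the connected components by citing Theorem \ref{thm:siblings-cographs}; but in the paper that theorem is \emph{deduced from} Theorem \ref{thm:connected-siblings} (via Proposition \ref{prop:BT-T}), so it cannot be used in its proof. The paper avoids this by a genuine induction on the quasi-order $G' \preceq G$ iff $G' \leq G$ or $G' \leq G^c$, which is w.q.o.\ (hence well founded) by Thomass\'e's theorem; it is the induction hypothesis, not the main theorem, that controls the components of $G^c$ when $G^c$ is disconnected. Your Steps 2--4 do gesture at the right elementary ingredients for that disconnected case --- absorption when a component swallows a copy of itself plus something, increasing chains of components extracted via w.q.o., and the one-sibling analysis for finitely many nontrivial components, corresponding to Lemma \ref{lem:disconnected1}, Lemma \ref{lem:more siblings}, Lemma \ref{lem:increasing chain}, Lemma \ref{lem:onesiblingdirectsum} and Proposition \ref{summary} --- but as written they are a sketch rather than a proof, and in any event they cannot close the argument without the missing case of connected complement.
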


Theorem \ref{thm:siblings-cographs} follows from this  and Proposition \ref{prop:BT-T}.

Apart from the one way infinite path, there are several countable connected graphs $G$ with  $\sib_{conn}(G)=1$ and $\sib(G)$  infinite. This is not the case with countable cographs. For a graph $G$ set $\sib_{c} (G)= \sib_{conn}(G)$ if $G$ is connected and $\sib_{c} (G)= \sib_{conn}(G^c)$ if $G$ is disconnected. We  prove that:

\begin{theorem}\label{thm:cograph- one-sibling} 
For a countable cograph, the following properties are equivalent
\begin{enumerate} [{(i)}] 
\item $\sib(G)= 1$; 
\item $\sib_c(G)=1$; 
\item $G$ is a finite lexicographic sum of cliques or independent sets.
\end{enumerate}
\end{theorem}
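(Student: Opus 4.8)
The plan is first to reformulate (iii) in terms of the canonical labelled tree $T_G$ attached to the cograph $G$: the graph $G$ is a finite lexicographic sum of cliques or independent sets if and only if the subtree of internal (non-leaf) nodes of $T_G$ is finite, equivalently $T_G$ has finite height and every internal node has only finitely many internal children (infinitely many leaf-children being permitted, since a bunch of sibling leaves under a series, resp. parallel, node is exactly a clique, resp. independent, block). Contracting each maximal leaf-bunch to its parent recovers the finite quotient cograph $H$ together with the blocks $B_v$, and conversely such a sum has finite internal tree. Because complementation interchanges the two node labels, both (iii) and the number $\sib_c$ are invariant under $G\mapsto G^c$, a fact I use freely. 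I then prove the cycle $(iii)\Rightarrow(i)\Rightarrow(ii)\Rightarrow(iii)$, invoking the dichotomies of Theorems \ref{thm:siblings-cographs} and \ref{thm:connected-siblings} to promote ``at least two siblings'' to ``infinitely many''.

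For $(iii)\Rightarrow(i)$ I prove rigidity by induction on the (finite) internal tree. In the base case $G=K_\kappa$ or $\overline{K_\kappa}$ with $\kappa\le\aleph_0$: a sibling embeds into $G$, hence is a clique (resp. independent set) of size $\le\kappa$, and receives an embedding of $G$, hence has clique (resp. independence) number $\kappa$, so it is isomorphic to $G$. For the inductive step, replacing $G$ by $G^c$ if needed I assume the root of $T_G$ is series, so $G=K_\lambda\ast D_1\ast\cdots\ast D_k$ is the join of its clique $K_\lambda$ of dominating vertices ($\lambda\le\aleph_0$) with finitely many disconnected pieces $D_j$ (the non-singleton anti-components), each a finite lexicographic sum of cliques or independent sets with a strictly smaller internal tree. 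Given a sibling $G'$ I must recover, as equimorphism invariants, the value $\lambda$ and the multiset of equimorphism types of the $D_j$, to which I apply the induction hypothesis, using that a finite block cannot become infinite nor an infinite one finite, since embedding back requires matching clique and independence numbers. The short implication $(i)\Rightarrow(ii)$ then follows because connected siblings are siblings: if $G$ is connected then $1\le\sib_{conn}(G)\le\sib(G)=1$; if $G$ is disconnected then $G^c$ is connected and $\sib(G^c)=\sib(G)=1$, so $\sib_c(G)=\sib_{conn}(G^c)=1$.

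For $(ii)\Rightarrow(iii)$ I argue the contrapositive $\neg(iii)\Rightarrow\neg(ii)$. By the self-complementarity above I may take $G$ connected, so $\sib_c(G)=\sib_{conn}(G)$, and by Theorem \ref{thm:connected-siblings} it suffices to produce one connected sibling $G'\not\cong G$. Now $\neg(iii)$ says the internal tree of $T_G$ is infinite, so either (A) some internal node has infinitely many internal children, or (B) the internal tree is infinite and finitely branching, hence by K\"onig's lemma contains an infinite descending path of internal nodes with alternating labels. In case (A) I peel a proper nonempty induced subcograph off one child-piece, re-attach it as an extra child, and absorb it into infinitely many of the remaining pieces; this changes the multiset of child-isomorphism-types while preserving embeddability both ways, the prototype being $\bigoplus_{\omega}K_2$, which is equimorphic but not isomorphic to $\bigoplus_{\omega}K_2\oplus K_1$. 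In case (B) I enlarge by one a finite block near the top of the path and reabsorb it two levels lower, where the alternation of labels re-exposes the missing element after a side branch is deleted; the prototype is the infinite alternating threshold cograph, whose number of dominating vertices can be raised from $1$ to $2$ inside its equimorphism class. Keeping the root a series node preserves connectedness, so $G'$ is as required, and the dichotomy then forces $\sib_c(G)=\infty$.

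The main obstacle is the construction in $(ii)\Rightarrow(iii)$: distilling the two prototype moves into a single lemma that, from an arbitrary infinite internal tree, manufactures an equimorphic but non-isomorphic connected cograph. The delicate point is securing non-isomorphism when the obvious finite feature to perturb happens to be already infinite (for instance an infinite top block), which forces a descent to the first genuinely finite invariant along the tree while still controlling the absorption; checking that both embeddings exist is exactly where the cograph/labelled-tree correspondence and the well-quasi-ordering of the pieces enter. A secondary difficulty, in $(iii)\Rightarrow(i)$, is that an equimorphism need not respect the block decomposition, so the crux there is to recover the individual block cardinalities --- not merely the quotient $H$ --- as invariants of the equimorphism class.
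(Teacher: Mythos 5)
Your proposal has genuine gaps, and they sit exactly where the paper's real work lies. The most serious one is in $(ii)\Rightarrow(iii)$: your entire case analysis --- ``internal children'', K\"onig's lemma, enlarging a block ``near the top of the path'', ``keeping the root a series node'' --- presupposes that the decomposition tree of $G$ has a least element. For a connected cograph $G$ this holds only when $G^c$ is disconnected; when $G$ and $G^c$ are both connected the tree has no root at all ($G$ is then the sum of a dense labelled chain with no first element, case (3) of Theorem \ref{decomposition-cographs}), there are no dominating vertices to double, and neither of your moves (A) or (B) even gets started. This rootless case is the technical heart of the paper: it is disposed of by Theorem \ref{label:chain/antichain}, whose proof (Lemmas \ref{lem:initialsegment}, \ref{lem:main}, \ref{laflamme-al} and \ref{lemma:equimorphy}) consumes essentially all of the b.q.o.\ machinery; you never invoke that theorem, and your sketch does not replace it. Moreover, even in the rooted cases, surgery at a non-root internal node does not by itself certify non-isomorphism: replacing a module by a non-isomorphic equimorphic one can yield an isomorphic whole (in a graph whose components are $K_1$ and $K_2$, each with infinite multiplicity, trading one $K_1$ for a $K_2$ changes nothing up to isomorphism), so ``changes the multiset of child-isomorphism-types'' is not an argument --- this is precisely why Lemma \ref{lem:main} carries its clause (2) about initial segments. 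Note that the paper's own proof of $(ii)\Rightarrow(iii)$ avoids all construction: it runs a well-founded induction on the quasi-order $\preceq$, quotes Theorem \ref{label:chain/antichain} to force $G$ or $G^c$ to be disconnected, and then applies Proposition \ref{summary} and Lemma \ref{lem:onesiblingdirectsum} to the connected components.

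The secondary gap is in $(iii)\Rightarrow(i)$: you state, but do not prove, the crux that the block cardinalities (your $\lambda$ and the types of the $D_j$) are invariants of equimorphy, acknowledging that a self-embedding need not respect the block decomposition. The paper resolves exactly this point by a different and complete device, monomorphic decompositions: by Theorem \ref{thm:canonical} and Proposition \ref{prop:mono-dec}, any embedding of $G$ onto an induced subgraph $G_{\restriction A}$ forces $\vert A\cap C\vert=\vert C\vert$ for every class $C$ of the canonical decomposition, and then the union of arbitrary class-by-class bijections is an isomorphism from $G$ onto $G_{\restriction A}$ (Theorem \ref{thm:lex-one-sibling}); no induction on the tree and no recovery of the quotient structure is needed. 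In summary, both nontrivial implications in your proposal rest on lemmas that are announced as obstacles rather than proved, and those obstacles are not routine: they are the content of the paper's argument.
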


The proofs of these two results have two ingredients. One is the tree decomposition of a cograph. Each cograph can be represented uniquely, up to isomorphism by a special type of (ordered) labelled tree. This fact is a special case of a very general result  about tree decomposition of binary structures; a result which appears in \cite{courcelle-delhomme}, and based on   \cite {ehrenfeucht, ehrenfeucht1, harju-rozenberg} (a similar approach is in  \cite{ille-woodrow1, ille-woodrow2, boussairi-al}). Due to the importance of the fact mentionned above, we  give a detailed presentation and a proof  (Theorem \ref{thm:correspondence}) in the appendix.
With that fact,  constructions of non isomorphic cographs reduce to constructions of labelled trees. We reduce our counting of siblings to the case of trees with no least element, for which we prove that there are $2^{\aleph_0}$ siblings (Theorem \ref{label:chain/antichain}). This reduction is based on  the second ingredient. This is the notion of well quasi ordering . A quasi-ordered set $Q$ is \emph{well quasi-ordered} (w.q.o. for short) if every infinite sequence $q_0, \dots, q_n, \dots$ contains an infinite  increasing subsequence  $q_{n_0}\leq \cdots q_{n_k}\leq \cdots$. In particular, $Q$ is \emph{well founded}: every non-empty subset $A$ contains a minimal element (an element $a$ such that no $b<a$, i.e., $b\leq a$ and $a\not \leq b$) is in $A$. This property allows induction on the elements of $Q$.  An example is the collection $\Cog_{\leq \omega} $ of countable cographs quasi-ordered by embeddability. Indeed, according to a theorem of Thomass\'e \cite{thomasse}, $\Cog_{\leq \omega} $ is well quasi-ordered  by embeddability. The reduction to the case of   trees with no least element in  Theorem  \ref{thm:connected-siblings} relies on a quasi order slightly different from embeddability which turns to be   w.q.o.  in virtue of Thomass\'e's result. The counting of siblings in the case of these trees, which  is the most difficult part of the paper, relies on properties of countable labelled chains. Induction can be done if the collection of our countable labelled chains is w.q.o. In general, the collection of countable chains labelled by a w.q.o. is not necessarily w.q.o. A strengthening of this notion is needed, this  is the notion of better quasi ordering (in short b.q.o.),  invented by C.St.J.A. Nash-Williams \cite{nashwilliams}, as a tool  for proving that some posets are w.q.o. No expertise about  b.q.o. is needed in this paper,  but the reader must be aware that this notion  is unavoidable to prove that some posets are w.q.o. According to  Laver \cite{laver},   the collection $\mathcal Q^{\mathcal {D}_{\leq \omega}}$ of  countable chains labelled by a b.q.o $Q$ is b.q.o.(see p. 90 of  \cite{laver}). The set $Q$ of labels we need is the direct product of the collection $\Cog_{\leq \omega} $ of countable cographs and the $2$-element antichain.   Since Thomass\'e proved in fact that that  $\Cog_{\leq \omega} $ is b.q.o., $Q$ is b.q.o. so our collection of labelled chains  is w.q.o and we can do induction (see Subsection \ref{subsectionlabelled}). We conclude  with  some problems (see Section \ref{section:extension}).

Some results about the  conjectures above were presented  in \cite{pouzet} and also in \cite{gagnon-hahn-woodrow}.

\section{Ingredients}

\subsection{Elementary facts}
We start with easy facts. 
\begin{lemma} \label{lem:onesiblingdirectsum}If a disconnected graph $G$ has finitely many non-trivial connected components and each connected component  has just one connected sibling then either $G$ has infinitely many disconnected siblings  or just one sibling. 
\end{lemma}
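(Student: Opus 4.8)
The plan is to prove the statement in the following equivalent form: assuming $G$ has at least two siblings, I will produce infinitely many pairwise non-isomorphic \emph{disconnected} siblings. Write $G = H \oplus \overline{K_m}$, where $H = C_1 \oplus \cdots \oplus C_k$ collects the non-trivial connected components and $\overline{K_m}$ is the (empty, finite, or countable) set of isolated vertices, so $m \in \{0,1,2,\dots\}\cup\{\aleph_0\}$. If $k=0$ then $G=\overline{K_m}$ and a direct check gives $\sib(G)=1$, so I may assume $k\ge 1$. Fixing a sibling $G'\not\cong G$, I split according to whether $G$ has a \emph{connected} sibling.

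First I would dispose of the case where some sibling $G_0$ of $G$ is connected. Then $G$ itself is a disconnected sibling of the connected graph $G_0$, so Lemma~\ref{lem:disconnected1} applies to $G_0$ and gives $\sib(G_0)=\sib(G)$ infinite; concretely the graphs $G_0\oplus \overline{K_n}$ for $n\ge 1$ are disconnected and pairwise non-isomorphic, since $G_0$ is connected with at least two vertices (were $G_0=K_1$, then $G\hookrightarrow K_1$ would contradict $G$ being disconnected) and so contributes no isolated vertices, leaving exactly $n$ of them in $G_0\oplus\overline{K_n}$. This is the required infinite family.

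It remains to treat the case where every sibling of $G$ is disconnected; in particular $G'$ is disconnected. Here the engine producing infinitely many siblings is an \emph{absorption of isolated vertices}: if $G\oplus \overline{K_t}\hookrightarrow G$ for some $t\ge 1$, then by induction $G\oplus \overline{K_{jt}}\hookrightarrow G$ for every $j$ --- inside a copy of $G$ one locates, via the embedding, a further copy of $G$ together with $t$ fresh independent vertices non-adjacent to it, keeping these disjoint from and non-adjacent to the vertices gathered at earlier stages. Together with the trivial embedding $G\hookrightarrow G\oplus \overline{K_{jt}}$ this makes each $G\oplus \overline{K_{jt}}$ a disconnected sibling of $G$, and when $m$ is finite their isolated-vertex counts $m+jt$ are pairwise distinct, so they are pairwise non-isomorphic. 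A symmetric variant covers the possibility that $H$ carries an infinite independent set: then $G\hookrightarrow H\oplus\overline{K_{m''}}$ for every finite $m''$, and the graphs $H\oplus \overline{K_{m''}}$ form the desired family. Thus the task reduces to showing that the number of isolated vertices is \emph{not constant} among the disconnected siblings of $G$, equivalently that some sibling agrees with $G$ on its non-trivial part $H$ but differs in its count of isolated vertices.

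The hard part will be precisely this last reduction: controlling the non-trivial components of a disconnected sibling. I would argue that, once the connected-sibling case is removed, a disconnected sibling has the \emph{same multiset} of non-trivial components as $G$. The two hypotheses enter here. Using $G'\hookrightarrow G$ and $G\hookrightarrow G'$, the composite self-embedding $G\to G$ sends each $C_i$ into some $C_{\sigma(i)}$; as there are only finitely many components, the iterates of $\sigma$ are eventually periodic, and along each cycle the components are equimorphic connected graphs, hence isomorphic by the uniqueness of connected siblings. This pins down the ``top'' components of $G$; an analogous analysis on the $G'$ side, together with the absence of a connected sibling to exclude the ray-type absorption in which a single $C_i$ swallows infinitely many components of $G'$, should force the two multisets to coincide. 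The genuinely delicate points --- where I expect the real work to lie --- are (i) ruling out that a component of $G'$ is new, or that infinitely many components of $G'$ collapse into one $C_i$, and (ii) the boundary case $m=\aleph_0$, in which adding isolated vertices is vacuous and one must instead read the difference between $G$ and $G'$ off the non-trivial part $H$ and invoke the infinite-independent-set variant above.
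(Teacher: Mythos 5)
Your Case~A is correct, and your two iteration devices (the eventual periodicity of $\sigma$ with isomorphism along cycles, and the iteration of $G\oplus \overline{K_t}\leq G$) are sound; but the heart of Case~B rests on a claim that is not merely ``delicate'' --- it is false as stated, and the reduction built on it collapses with it. The hypotheses of the lemma (arbitrary graphs, each component with one connected sibling) do not exclude the situation in which some non-trivial component $C_i$ \emph{absorbs} a non-trivial connected graph $L$, i.e.\ $C_i\oplus L\leq C_i$; this is exactly Case~1 of the paper's proof, and it is compatible with ``every sibling of $G$ is disconnected'' (a connected sibling exists if and only if $G\leq C_j$ for some single component $C_j$, which is prevented, for instance, when $G$ has two embedding-incomparable components). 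In that situation $G\oplus L$ is a disconnected sibling of $G$ whose multiset of non-trivial components is $\{C_1,\dots,C_k,L\}$, not $\{C_1,\dots,C_k\}$, so a sibling $G'\not\cong G$ need \emph{not} differ from $G$ in its number of isolated vertices. Worse, your reduction target itself can fail: the siblings $G\oplus L\oplus\dots\oplus L$ ($n$ disjoint copies of $L$) all have exactly as many isolated vertices as $G$, and if $m=\aleph_0$ while no component absorbs $\overline{K_{\aleph_0}}$, then \emph{every} sibling of $G$ has exactly $\aleph_0$ isolated vertices, so ``the number of isolated vertices is not constant among the disconnected siblings'' is simply false --- yet the lemma's conclusion holds, because the graphs $G\oplus L\oplus\dots\oplus L$ are already infinitely many pairwise non-isomorphic disconnected siblings. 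What is missing is precisely this second engine, adding disjoint copies of an absorbed \emph{connected} graph rather than isolated vertices; the paper runs it first (its Case~1), and only under the assumption that no component absorbs a non-trivial connected graph does it run the permutation argument, at which point your worries (i) --- collapse of components, new components --- genuinely cannot occur, which is how the paper gets $\mathrm{sib}(G)=1$ there.

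A second, independent error is the variant you invoke for $m=\aleph_0$: that $H$ ``carries an infinite independent set'' does not give $G\leq H\oplus\overline{K_{m''}}$. The images of the $\aleph_0$ isolated vertices must be pairwise non-adjacent \emph{and} non-adjacent to an induced copy of all of $H$; an infinite independent set inside a component provides neither. What is actually needed is $C_i\oplus\overline{K_{\aleph_0}}\leq C_i$ for some component $C_i$ --- the absorption hypothesis appearing in the paper's Case~2 with $\kappa=\aleph_0$. For a concrete failure take $H=K_{\omega,\omega}$: it contains an infinite independent set, but $K_{\omega,\omega}\oplus\overline{K_{\aleph_0}}$ does not embed into $K_{\omega,\omega}\oplus\overline{K_{m''}}$ for any finite $m''$, since every vertex of $K_{\omega,\omega}$ is adjacent to some vertex of any induced copy of $K_{\omega,\omega}$ inside it. To repair the proof you should organize Case~B as the paper does: split according to absorption (first of a non-trivial connected graph, then of independent sets of the size of the trivial part), rather than trying to force every difference between siblings into the count of isolated vertices.
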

\begin{proof}
 Let $G$ satisfy the conditions of the lemma.  Either every component is trivial, in that case $G$ is an independent set and $\sib(G)=1$, or some component is not trivial. In that case, let $H_1,\dots,  H_k$ enumerate the non-trivial components. 
 
 \noindent {\bf Case 1}  There is a component $H$ and a non-trivial and connected graph $L$ such that $H\oplus L\leq H$. In that case, for each $n\geq 1$,  the direct sum of $G$ with the direct sum of $n$ copies of $L$ is a disconnected sibling of $G$, hence $G$ has infinitely many disconnected siblings.
 
 \noindent  {\bf Case 2}   Otherwise,   $H\oplus L\leq H$ implies that $L$ cannot be connected. In this case, if the set $T$ of trivial components is non empty and for some connected component $H$, $H\oplus \overline K_{\kappa}\leq H$, where $\kappa:= \vert T\vert$ then $G$ is equimorphic to the set $G'$ of non-trivial components, and also to $G'\oplus 1$ thus $G$ has infinitely many disconnected siblings. If $T$ is infinite or $H\oplus 1\not \leq H$ for each connected component $H$ then $\sib (G)= 1$. Indeed, suppose that $G' \subseteq G$ induces a sibling of $G$ via an embedding $\phi$.   Since each $H_i$  is connected the image under $\phi$ must be a contained in some component $H_{\varphi(i)}$ of $G$. Furthermore, the map $\varphi$ must be one to one, hence bijective (otherwise, by iterating it we will find $i\not = j$ such that $H_i\oplus H_j$ embeds into $H_j$ contradicting our hypothesis). We may suppose that $\varphi$ is the identity, so, if $T$ is infinite, infinitely many elements will stay in $T$ and if $T$ is finite, none will be mapped  into some non-trivial component, hence  $G'$ is isomorphic to $G$.  \end{proof}
%
%
%
%
%
%
%
\begin{lemma}
\label{lem:more siblings}Let $G$ be a countable graph. Suppose that some connected component has infinitely many connected siblings. Then $G$ has infinitely many siblings and if it has at least two connected components, infinitely many are disconnected. 
\end{lemma}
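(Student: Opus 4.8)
The plan is to manufacture siblings of $G$ by surgery on its connected components. Write $G=H\oplus K$, where $H$ is the component with infinitely many pairwise non-isomorphic connected siblings $H_0=H,H_1,H_2,\dots$ and $K$ is the direct sum of the remaining components. The basic point is that mutual embeddings of components lift to the sum: since each $H_n$ is equimorphic to $H$ and $K\le K$, we get $H_n\oplus K\le H\oplus K=G$ and $G=H\oplus K\le H_n\oplus K$, so each $G_n:=H_n\oplus K$ is a sibling of $G$. Moreover, if $G$ has at least two components then $K\ne\emptyset$, so every $G_n$ has at least two components and is disconnected; and if $G$ is connected then $G=H$, whose infinitely many connected siblings are a fortiori siblings, so that subcase is immediate. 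It therefore remains, in the disconnected case, to extract infinitely many pairwise non-isomorphic graphs among the siblings just produced.

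Since a graph that is the direct sum of its connected components is determined up to isomorphism by the multiset of isomorphism types of those components, I would count by comparing multiplicities. For an isomorphism type $[L]$ let $\mu(L)\in\{0,1,\dots,\aleph_0\}$ denote its multiplicity as a component of $G$. Split into two cases according to whether infinitely many of the $H_n$ occur with finite multiplicity. If $F=\{n:\mu(H_n)<\aleph_0\}$ is infinite, then for $n\in F$ the type $[H_n]$ occurs in $G_n=H_n\oplus K$ with multiplicity exactly one greater than in every $G_m$ with $m\in F$, $m\ne n$ (the extra $H_m$ is of a different type, and all these multiplicities are finite); hence the $G_n$, $n\in F$, are pairwise non-isomorphic, giving infinitely many disconnected siblings.

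The remaining, genuinely delicate, case is $F$ finite, i.e. all but finitely many $H_n$ occur with infinite multiplicity; here naive replacement fails because $1+\aleph_0=\aleph_0$ absorbs the change. Reindex so that $H_1,H_2,\dots$ each have multiplicity $\aleph_0$ in $G$, and let $R$ be the direct sum of all components of $G$ \emph{not} isomorphic to any of $H_1,H_2,\dots$ (so $R$ is fixed and collects the rest of $G$, including the finitely many low-multiplicity siblings of $H$). For each $m\ge 1$ set
\[
G^{(m)}:=R\ \oplus\ H_m\ \oplus\ \bigoplus_{i\ge 1,\ i\ne m}\Big(\bigoplus_{\aleph_0}H_i\Big),
\]
that is, keep $R$, let $H_m$ appear exactly once, and let every other recurrent sibling retain infinite multiplicity. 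Each $G^{(m)}$ still has $\aleph_0$ components isomorphic to siblings of $H$, and since all the $H_i$ are mutually embeddable with infinite supplies available on both sides, the mutual embeddings $G^{(m)}\le G$ and $G\le G^{(m)}$ both go through with pairwise disjoint component images; thus $G^{(m)}$ is a disconnected sibling. Finally $G^{(m)}\not\cong G^{(m')}$ for $m\ne m'$, since $[H_m]$ has multiplicity $1$ in $G^{(m)}$ but $\aleph_0$ in $G^{(m')}$, yielding infinitely many disconnected siblings and completing the argument.

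The one routine ingredient used throughout is that mutual embeddability of all the relevant components lifts to mutual embeddability of the direct sums with pairwise disjoint images, which is straightforward. I expect the main obstacle to be exactly the absorption phenomenon isolated in the last case: once a sibling type already occurs with infinite multiplicity, altering a single copy is invisible up to isomorphism, so differences cannot be detected by one replacement and one must instead engineer a \emph{finite} multiplicity (here the single copy of $H_m$) to serve as a distinguishing invariant while preserving equimorphy.
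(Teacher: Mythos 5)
Your proof is correct, but it follows a genuinely different route from the paper's, which disposes of the whole counting problem with a single, more uniform surgery. Given the pairwise non-isomorphic connected siblings $H_0=H, H_1,\dots$ of $H$, the paper defines $G_i$ by replacing \emph{every} component of $G$ that is a sibling of $H$ (not just the one distinguished copy $H$) by $H_i$. Equimorphy of $G_i$ with $G$ is checked component-wise exactly as in your argument, but non-isomorphy becomes immediate, with no multiplicity bookkeeping: in $G_i$, all components equimorphic to $H$ are isomorphic to $H_i$, so an isomorphism $G_i\cong G_j$ would have to send a component isomorphic to $H_i$ (hence equimorphic to $H$) onto a component of $G_j$ equimorphic to $H$, i.e.\ onto a copy of $H_j$, which is impossible for $i\neq j$. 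This uniform replacement means the absorption phenomenon you isolate --- a type of infinite multiplicity swallowing a single substitution --- simply never arises, since no type other than $H_i$ survives among the sibling components of $G_i$. Your approach, by contrast, performs the minimal surgery $G_n:=H_n\oplus K$ and then must confront absorption head-on, splitting into the finite-multiplicity case (where the multiset of component types with their multiplicities separates the $G_n$) and the infinite-multiplicity case (where you instead engineer multiplicity one for $[H_m]$ as the distinguishing invariant); both steps are sound, including the disjoint-image interleaving needed for $G\leq G^{(m)}$. What your version buys is an explicit identification of the isomorphism invariant at work and a second construction that distinguishes siblings by multiplicity rather than by type; what the paper's version buys is brevity and the elimination of all case analysis.
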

\begin{proof}
Let $G$ satisfy the conditions of the lemma.  Let $H$ be a connected component with infinitely many non-isomorphic connected siblings $H_0=H, H_1, \dots, H_k, \dots$   Let $G_i$ result from $G$ by replacing every component that is a sibling of $H$ by $H_i$. Then $G_i$ is a sibling of $G$ and the $G_i$'s are pairwise non isomorphic. 

If G has has at least two components, the construction yields disconnected siblings.
\end{proof}%
%
%

\begin{lemma}
\label{lem:increasing chain}Let $G$ be a countable graph. Suppose that $G$ is disconnected and that there is an infinite  sequence $(H_n)_{n<\omega}$ of non-trivial components which is increasing w.r.t.  embeddability. Then $G$ has infinitely many disconnected siblings.
\end{lemma}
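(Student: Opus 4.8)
The plan is to use the increasing chain as a device for \emph{making room}. Fix embeddings $f_n\colon H_n\hookrightarrow H_{n+1}$ and write $G=R\oplus\bigoplus_{n<\omega}H_n$, where $R$ gathers the components off the chain. First I would establish the basic \emph{absorption move}: for any finite connected graph $F$ with $F\le H_0$ (in particular $F=\overline K_1$, a single isolated vertex, or $F=H_0$ itself) and any $k\ge 1$, the graph $G\oplus(k\cdot F)$ is equimorphic to $G$. One direction is the trivial inclusion $G\le G\oplus(k\cdot F)$; for the other I send $R$ identically, shift the chain up by $k$ via $H_n\mapsto H_{n+k}$ (using $f_{n+k-1}\circ\cdots\circ f_n$), and drop the $k$ new copies of $F$ into the freed slots $H_0,\dots,H_{k-1}$, each through $F\le H_0\le H_i$. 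Since distinct source components meet distinct target components, this is an embedding; starting the shift at index $j$ gives the same for $G\oplus(k\cdot H_j)$. A dual \emph{deletion move} works the same way: if a connected type $C$ has $\aleph_0$ copies in $G$ and some type $C'$ with $C\le C'$ and $C'\not\cong C$ also has $\aleph_0$ copies, then erasing all copies of $C$ again yields a graph equimorphic to $G$ (park the erased copies, together with the surviving $C'$-copies, into the $\aleph_0$ many $C'$-slots).

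Next I would convert equimorphy into non-isomorphy. For a disconnected graph the isomorphism type is recorded by the multiplicity function $C\mapsto m_G(C)$ counting components isomorphic to $C$; hence the siblings $G\oplus(k\cdot F)$ are pairwise \emph{non-isomorphic} exactly when $m_G(F)<\aleph_0$, and likewise for $G\oplus(k\cdot H_j)$ and $m_G(H_j)$. So if some chain member $H_j$, or some finite connected $F\le H_0$ (for instance $\overline K_1$), occurs only finitely often in $G$, the corresponding family already provides infinitely many pairwise non-isomorphic disconnected siblings and we are done.

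It remains to treat the case in which every chain member, and every finite connected graph embeddable in the chain (in particular $\overline K_1$), has infinite multiplicity in $G$. Here I would split along the increasing sequence of equimorphy classes $[H_0]\le[H_1]\le\cdots$. If it is \emph{not} eventually constant, take a strictly increasing subsequence $H_{n_0}<H_{n_1}<\cdots$ of pairwise non-isomorphic chain members; since each has infinite multiplicity, the deletion move annihilating a single type $H_{n_i}$ (parking into the $H_{n_{i+1}}$-slots) yields siblings that kill different types, hence are pairwise non-isomorphic. If the sequence \emph{is} eventually constant, with all $H_n$ $(n\ge N)$ equimorphic to $H:=H_N$, then either infinitely many of these are pairwise non-isomorphic --- so the component $H$ has infinitely many connected siblings and Lemma~\ref{lem:more siblings} finishes --- or only finitely many isomorphism types occur and some connected $H'$ appears infinitely often; then, as $\overline K_1\le H'$ and $\overline K_1$ has infinite multiplicity, I would \emph{remove} isolated vertices, retaining exactly $k$ of them and re-parking the remaining (infinitely many) isolated vertices one per $H'$-slot, producing siblings with $k=0,1,2,\dots$ isolated vertices.

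The main obstacle is precisely this last bookkeeping. The shift makes equimorphy almost automatic, so the difficulty is never in building a sibling but in guaranteeing that the infinitely many constructions are pairwise \emph{non-isomorphic}: I must locate some isomorphism invariant --- a multiplicity $m_G(C)$ --- that can be pushed through infinitely many values while the graph stays equimorphic to $G$. When the obvious additive candidate ($H_0$, or $\overline K_1$) already has infinite multiplicity, that invariant has to be found elsewhere, and the chain's room-making must be used subtractively (deleting a type, or trimming isolated vertices) rather than additively, or else passed off to Lemma~\ref{lem:more siblings}. Isolating the right invariant in each branch, and verifying that each ``parking'' map is a genuine embedding, is where the care is needed.
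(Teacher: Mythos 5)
Your argument is correct in substance, but it takes a genuinely longer route than the paper's, and the difference is instructive. The paper needs no case analysis at all: let $G'$ be the union of the non-trivial components of $G$. The same chain-shift that powers your absorption move (send $H_n$ into $H_{2n}$, leaving the odd slots $H_{2n+1}$ free) shows that $G'\oplus \overline K_{\omega}$ embeds into $G'$: every isolated vertex, whether there are finitely or infinitely many of them, gets parked in its own odd slot. Hence $G'\equiv G$ and $G'\oplus \overline K_{n}\equiv G$ for every finite $n$; since $G'$ has no trivial components, $G'\oplus \overline K_{n}$ has exactly $n$ isolated vertices, so these siblings are pairwise non-isomorphic, and each is disconnected because $G'$ has infinitely many components. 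In other words, the multiplicity $m_G(\overline K_1)$ that drives your case split can always be normalized to $0$ first --- deleting \emph{all} trivial components is an equimorphy-preserving move that needs no hypothesis on multiplicities, since the chain absorbs them directly --- after which the isolated-vertex count is a usable invariant unconditionally. Under that normalization your Case 1 and Case 2b-ii merge into the entire proof, and your Cases 2a and 2b-i (including the appeal to Lemma \ref{lem:more siblings}) become unnecessary. What your route buys is a few by-products (siblings distinguished by erasing different component types, and the fact that purely additive moves suffice whenever some multiplicity is finite); what it costs is length.

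One slip you should repair in the final sub-case: ``re-parking the remaining isolated vertices one per $H'$-slot'' is not an embedding if the $H'$-components themselves stay fixed, because an isolated vertex landing inside an occupied $H'$-component acquires edges to the image of that component, which it does not have in the source. You must interleave the displaced isolated vertices \emph{together with} all the $H'$-components injectively into the $\aleph_0$ many $H'$-slots --- exactly the bookkeeping your own deletion move prescribes, so the fix is immediate, but as written the map fails.
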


\begin{proof}
First, $G\oplus \overline{K_{\omega}}$ embeds into $G$. Indeed, if  $K$ is the union of the connected components distinct from the $H_n$'s, then $G= (\bigoplus_{n\in \N} H_n) \oplus K$. Since $\bigoplus_{n\in \N} H_n$ embeds into $\bigoplus_{n\in \N} H_{2n}$,  $G\oplus \overline{K_{\omega}}$ embeds into  $G$. Let $G'$ be the restriction of $G$ to the nontrivial components of $G$. Then $G'$  is equimorphic to $G$. And also $G'\oplus \overline K_{n}$. Thus, $G$ has infinitely many siblings and infinitely many are disconnected. 
\end{proof}

One can obtain a bit more under the stronger hypothesis that $H_n$ embeds in $H_m$ just in case $n<m$.  In this case, we can obtain a continuum of non-isomorphic siblings. Let $\mathfrak K$ be the family of components $K$ for which there is $n$ such that $K$ embeds in $H_n$.  Enumerate $\mathfrak K$ as $\{K_1,...,K_n,...\}$.  Let the union of the components not contained in $\mathfrak K$  be denoted $L$.  Let $J$ be an infinite subset of $\{1,2,\dots\}$. Set $G_J:= L \oplus \bigoplus_{n \in J}H_n$.
Since $K_m$ embeds in infinitely many $H_k$, and the same is true for $H_n$ for $n \notin J$, by interleaving we see that $G_J$ is  a sibling of $G$.

Now let $J_1 \neq J_2$, and let $i$ be the least element in the symmetric difference.  We may suppose that $i \in J_2 \setminus J_1$.  Then $G_{J_2}$ has a component isomorphic to $H_i$ while $G_{J_1}$ does not.  

In summary, with Lemma  \ref{lem:disconnected1}, \ref{lem:more siblings}  and \ref{lem:increasing chain}, we have:

\begin{proposition}\label{summary} A disconnected graph $G$ has infinitely many disconnected siblings provided that either $G$ is equimorphic to some connected component, or some connected component has infinitely many connected siblings, or there is an infinite  sequence $(H_n)_{n<\omega}$ of non-trivial components which is increasing w.r.t.  embeddability.  
\end{proposition}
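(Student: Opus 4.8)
The plan is to prove the statement by a three-way case analysis in which each hypothesis is dispatched by exactly one of the three lemmas just established, so the argument is essentially an assembly. In every case I will check that the siblings produced are genuinely disconnected and pairwise non-isomorphic, so that $G$ really does acquire infinitely many disconnected siblings.

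First I would treat the hypothesis that $G$ is equimorphic to some connected component $H$. Here $G$, being disconnected, is itself a disconnected sibling of the connected graph $H$, so Lemma \ref{lem:disconnected1} applies to $H$ and tells us that $\sib(H)$ is infinite. The one subtlety is that Lemma \ref{lem:disconnected1} is phrased as ``$\sib(H)$ is infinite'' rather than ``infinitely many siblings of $H$ are disconnected''; I would therefore recall its construction, namely the graphs $H\oplus \overline{K_n}$ for $n\geq 1$. Since $G$ is disconnected it is not reduced to a single vertex, and $G\leq H$ forces $H$ to be a non-trivial component; being connected with at least two vertices, $H$ has no isolated vertex, so the $H\oplus \overline{K_n}$ have exactly $n$ isolated vertices, are disconnected, and are pairwise non-isomorphic. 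As $G$ is equimorphic to $H$ these are also siblings of $G$, giving infinitely many disconnected siblings of $G$.

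Next I would handle the hypothesis that some connected component of $G$ has infinitely many connected siblings, which is verbatim the hypothesis of Lemma \ref{lem:more siblings}. Because $G$ is disconnected it has at least two connected components, so the second clause of that lemma directly produces infinitely many disconnected siblings. Finally, the hypothesis that there is an infinite sequence $(H_n)_{n<\omega}$ of non-trivial components increasing with respect to embeddability is exactly the hypothesis of Lemma \ref{lem:increasing chain}, whose conclusion is again infinitely many disconnected siblings.

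The only genuine work is the bookkeeping in the first case: verifying that the equimorphism $G\sim H$ transports the constructed siblings $H\oplus \overline{K_n}$ back to $G$, that $H$ is non-trivial (so that these graphs are honestly disconnected), and that they are pairwise distinct. The remaining two cases are immediate citations, so no further argument is needed and the three hypotheses together yield the claimed conclusion.
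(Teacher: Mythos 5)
Your proposal is correct and takes exactly the paper's route: the paper states this proposition as a direct summary of Lemmas \ref{lem:disconnected1}, \ref{lem:more siblings} and \ref{lem:increasing chain}, with no further argument. Your additional bookkeeping in the first case (checking that the graphs $H\oplus \overline{K_n}$ are disconnected, pairwise non-isomorphic, and transported to siblings of $G$ by the equimorphism) fills in precisely the detail that the paper leaves implicit, since Lemma \ref{lem:disconnected1} as stated only asserts that $\sib(H)$ is infinite.
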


\subsection{Tree decomposition of a cograph}
The crucial result we will use for the proof of Theorems \ref{thm:connected-siblings} and \ref {thm:cograph- one-sibling}  is the following. 
\begin{theorem}\label{label:chain/antichain} If a countably infinite  cograph is connected and its complement too then $G$ has $2^{\aleph_0}$  siblings with the same property. 
\end{theorem}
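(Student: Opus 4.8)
The plan is to pass to the tree decomposition of $G$ and to read the hypothesis as a purely structural condition on the associated labelled tree. By the correspondence of Theorem \ref{thm:correspondence}, $G$ is coded, uniquely up to isomorphism, by a labelled ordered tree $T$ whose internal nodes carry one of two labels, $\oplus$ (disjoint union) and $\otimes$ (join, i.e. the complement of the disjoint union of complements), these labels alternating along branches. For a finite cograph with at least two vertices the root of $T$ is either a $\oplus$-node, whence $G$ is disconnected, or a $\otimes$-node, whence $G^{c}$ is disconnected; so the simultaneous connectedness of $G$ and of $G^{c}$ forces $T$ to have no root, that is, no least element. First I would make this precise. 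Fixing a base vertex, the nodes of $T$ above it form an infinite chain $C$ with no greatest member, so $G$ is an unbounded increasing union of proper modules, and the top of the decomposition is governed by this countable chain $C$, each node of which carries a label in $Q:=\Cog_{\leq\omega}\times 2$: the cograph $A_{n}$ hanging off at that level together with the sign ($\oplus$ or $\otimes$) of the step. An easy connectivity computation shows that $G$ is connected iff the $\otimes$-steps are cofinal in $C$ and, dually, $G^{c}$ is connected iff the $\oplus$-steps are cofinal; thus both signs occur cofinally. This is the reduction to countable chains labelled by $Q$ announced in the introduction.

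Next I would manufacture a family $(G_{s})_{s\in 2^{\omega}}$ of candidate siblings by perturbing this labelled chain. Along a fixed strictly increasing cofinal sequence of levels at which both signs still occur, I attach at the $n$-th chosen level an additional gadget, or not, according to the $n$-th bit of $s$, arranging that each gadget preserves the cofinality of both the $\oplus$- and the $\otimes$-steps. Re-encoding through the correspondence produces, for each $s$, a countably infinite cograph $G_{s}$ which, by the previous paragraph, is again connected with connected complement; by construction $G_{s}$ and $G_{s'}$ differ only in the inserted data.

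The first point to establish is that all the $G_{s}$ lie in the single equimorphism class of $G$. Here I would transport to labelled chains the interleaving idea already used in Lemma \ref{lem:increasing chain}: an embedding between two of our cographs is produced from an embedding of the underlying $Q$-labelled chains, and the existence of such embeddings in both directions is exactly a well-quasi-ordering statement about countable $Q$-labelled chains. Since $\Cog_{\leq\omega}$ is b.q.o. by Thomass\'e's theorem and the $2$-element antichain is b.q.o., the label set $Q$ is b.q.o., whence by Laver's theorem the countable $Q$-labelled chains are b.q.o., in particular w.q.o.; this is what guarantees that the perturbations are absorbed and that every $G_{s}$ is equimorphic to $G$.

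It remains to see that $s\mapsto G_{s}$ separates $2^{\aleph_{0}}$ isomorphism types. For this I would compute the canonical labelled tree of $G_{s}$ and extract from it an isomorphism invariant recovering $s$, or at least taking continuum many values on the family. Because the canonical tree is unique up to isomorphism (Theorem \ref{thm:correspondence}), any feature read off it canonically is an isomorphism invariant, even though it need not be preserved under embedding; the gadgets are chosen rigid enough to survive the passage to the reduced canonical form and to remain legible there. I expect this balancing act to be the main obstacle: the equimorphism argument wants the inserted data invisible up to embeddability, while the counting argument wants it visible up to isomorphism, and reconciling the two is precisely where the fine analysis of countable labelled chains, and the use of b.q.o. rather than mere w.q.o., is unavoidable. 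Keeping both signs cofinal throughout, so that every $G_{s}$ stays in the class of connected cographs with connected complement, is the additional bookkeeping that must accompany the construction, and it yields the asserted $2^{\aleph_{0}}$ siblings with the same property.
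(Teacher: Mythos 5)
Your overall strategy coincides with the paper's: read the hypothesis as ``the decomposition tree has no least element'', reduce via Theorem~\ref{decomposition-cographs} to a countable reduced chain labelled by $\Cog_{\leq\omega}\times\{0,1\}$, perturb that chain along a coinitial sequence according to a bit-string $s$, and then prove that the perturbed sums are equimorphic to $G$ yet pairwise non-isomorphic. But at the two places where the real work happens, your argument has genuine gaps. The first is the equimorphy step. You claim that ``the existence of such embeddings in both directions is exactly a well-quasi-ordering statement about countable $Q$-labelled chains'', so that b.q.o.\ (Thomass\'e plus Laver) ``guarantees that the perturbations are absorbed''. That is not what w.q.o.\ gives: w.q.o.\ asserts that every infinite sequence contains an increasing pair, and says nothing about a \emph{specific} pair; in particular it cannot show that the enlarged chain $C_f$ embeds back into $C$. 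In the paper this absorption is done by hand: one chooses an initial segment $J$ with $C_{\restriction J}$ left-indecomposable (this is where b.q.o.\ actually enters --- well-foundedness yields a minimal, hence left-indecomposable, initial segment, Lemma~\ref{lemma:basicindec}), writes $C_{\restriction J}$ as a quasi-monotonic $\omega^*$-sum of indecomposables, and proves that $\underline{2}.C$ embeds in $C$ for every indecomposable countable labelled chain $C$ (Lemma~\ref{lem:indec}, itself an induction using well-foundedness); it is this ordinal-product lemma that swallows the inserted gadgets and the extra vertices (Lemma~\ref{lemma:equimorphy}). The b.q.o.\ machinery supplies the induction scheme and the decomposition into indecomposables; it is not itself an embedding statement.

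The second gap is the counting of isomorphism types, which you yourself call ``the main obstacle'' and leave unresolved. Two concrete ideas are missing. First, the gadget design: the paper makes the inserted data recognizable by a parity trick --- every original label $G_i$ that is a clique or independent set of \emph{even} size gets one extra vertex, so that even-sized cliques and independent sets occur exactly at the inserted positions $b_n,c_n$, whose sizes $2f(n)+2$ encode $f$. Second, and more subtly, an isomorphism between two of the resulting cographs does \emph{not} give an isomorphism of the full labelled chains: by Lemma~\ref{lem:initialsegment} it only gives an isomorphism between some infinite \emph{initial segments} of the two chains, hence only a map $h$ between final segments of $\N$ with $f'(h(n))=f(n)$. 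Consequently no single invariant ``recovering $s$'' can work; one needs a family of $2^{\aleph_0}$ maps $f$ no two of which agree on any final segment under any such $h$, which the paper extracts from an almost disjoint family (Lemma~\ref{laflamme-al}). Without these two ingredients your construction cannot rule out that distinct bit-strings yield isomorphic cographs.
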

This relies on properties of the tree decomposition of a cograph.
Cographs have a simple structure. They can be obtained from the one vertex graph by iteration  of three operations: direct sum, complete sum and   sum over a labelled chain. 
If $(G_{i})_ {i\in I}$ is a family of at least two non-empty graphs, their \emph{direct sum} $\bigoplus_{i\in I} G_i$ is the disjoint union of the $G_i$'s with no edge between distinct $G_i$'s; their \emph{complete sum}  $\bigplus _{i\in I} G_i$ is the disjoint union of the $G_i$'s with  any pair of vertices between distinct $G_i's$ connected by an edge.  If there is a linear order $\leq$ on $I$ and a labelling $r$ of $I$ by $0$ and $1$, this structure being denoted $C:= (I, \leq, r)$, then the  \emph{labelled sum} denoted by $\sum_{i\in C} G_i$ is the disjoint  union of the $G_i$'s, two vertices $x\in G_i$, $y\in G_j$, with $i<j$, being linked by an edge if and only $r(i)=1$. We may view such a sum  as the graph associated to the labelled chain $C:= (I, \leq, \ell)$ where $\ell(i):= (G_i, r(i))$ and denote it by $\Sigma  C$. 
Due to the associativity of these operations, all possible sums do not need to be considered. The direct or complete sums we need to consider are the  direct sums of connected graphs, and the complete sums of graphs whose complements are connected. Since the complement of a finite connected cograph  is  disconnected, finite cographs are obtained by means of direct or complete sums. Infinite cographs may require labelled sums, but it suffices to consider those  indexed by infinite \emph{densely labelled} chains (that is chains in which the two labels occur in every interval having at least two elements) such that if the label of $i$ is $0$, resp. $1$,  then $G_{i}$ is not a complete sum, resp. a direct sum,  of at least two nonempty cographs, and if $i$ is the largest element of $C$, then $G_i$ is either a direct sum or a complete sum of at least two singletons. We will say that the labelled chain $C$ and  its  sum are  \emph{reduced}. 

With these operations, one has (see Boudabbous and Delhomm\'e \cite{boudabbous-delhomme}, Lemma 2.2 page 1747):
\begin{theorem}\label{decomposition-cographs}Let $G$ be a cograph with more than one vertex. Then either 
\begin{enumerate}
\item $G$  is direct sum of at least two  nonempty connected cographs, or
\item$G$ is a complete sum of at least two nonempty cographs whose complements are  connected, or
\item $G$ is the  sum $\Sigma C$ of a reduced labelled chain $C:=(I, \leq, \ell)$, where $(I, \leq)$ is  an  infinite chain with no first element, each label  $\ell(i)$ is the pair $(G_i, r(i))$ made of a non-empty  cograph $G_i$ and  an element $r(i)\in \{0,1\}$ in such a way that $r$ is a dense labelling of the chain $(I, \leq)$. \end{enumerate}
\end{theorem}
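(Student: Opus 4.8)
The plan is to prove the statement by a trichotomy governed by the connectivity of $G$ and of its complement $G^c$, with essentially all the work concentrated in the case where both are connected. First I would dispose of the two easy alternatives. If $G$ is disconnected it is the direct sum of its connected components, of which there are at least two, each a nonempty connected cograph, so we are in case (1). Dually, if $G^c$ is disconnected then, writing $G$ as the complete sum $\bigplus$ of the complements of the co-components of $G^c$, each summand is a nonempty cograph with connected complement and there are at least two of them, so we are in case (2). By the Gallai dichotomy these two alternatives are mutually exclusive, and if neither holds then $G$ and $G^c$ are both connected; everything below concerns this remaining case.

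In the doubly connected case I would invoke the modular (Gallai) decomposition and exploit the defining property of cographs recalled in the introduction: no induced subgraph of a cograph on more than two vertices is indecomposable. Consequently, for any congruence partition $P$ of $V$ into modules, the quotient $G/P$ is again a cograph, so it cannot be a finite prime graph on $\ge 3$ blocks; moreover a quotient that is an independent set, a complete graph, or has at most two blocks would force $G$ or $G^c$ to be disconnected, contrary to hypothesis. Hence the poset of proper modules of $G$, ordered by inclusion, admits no finest partition of $V$ into maximal strong modules with a prime quotient: the decomposition is ``infinitely divisible,'' and this is exactly what must be converted into an infinite \emph{chain} rather than a finite prime quotient.

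To extract the labelled chain I would fix a maximal nested family $\mathcal M$ of modules of $G$, ordered by inclusion and with union $V$, and take $I$ to be the set of its gaps, linearly ordered by inclusion. Each gap cuts out a block $G_i$ which is itself a module, so for $i<j$ the adjacency between $G_i$ and $G_j$ is uniform and is recorded by a single label $r(i)\in\{0,1\}$; this realizes $G$ as $\Sigma C$ for $C:=(I,\le,\ell)$ with $\ell(i):=(G_i,r(i))$. The absence of a first element is then forced by our standing hypotheses: a least index $i_0$ would make $G_{i_0}$ adjacent either to nothing (so $r(i_0)=0$, disconnecting $G$) or to everything (so $r(i_0)=1$, disconnecting $G^c$). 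The density of the two labels and the ``reduced'' side conditions follow by coarsening, using the associativity of the three operations to absorb any interval carrying only one label into a single block and to push the decomposition of a largest block, if present, down to a direct or complete sum of singletons.

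The hard part will be the infinite case just sketched: making the modular decomposition of an infinite cograph rigorous, and in particular proving that the nested family of modules can be chosen so that the adjacency across every gap is genuinely two-valued and \emph{left-determined} (so that a single $r(i)\in\{0,1\}$ really suffices), that the resulting chain is dense in both labels, and that it has no first element. Showing that these choices can always be met — equivalently, that in the doubly connected case the quotient is exactly a sum over a densely labelled chain of singletons rather than some more complicated primitive structure — is precisely where the $P_4$-freeness must be used in full, and is the delicate heart of the argument. An alternative, and perhaps cleaner, route would be to read the three cases directly off the type of the root of the labelled ordered tree associated with $G$ by the general correspondence (Theorem~\ref{thm:correspondence}), thereby outsourcing the infinite bookkeeping to the tree-decomposition machinery.
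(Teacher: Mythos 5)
Your two easy alternatives are handled correctly, and your reduction is the right one: the entire content of the theorem is the claim that a countable cograph $G$ with both $G$ and $G^c$ connected is the sum of a reduced, densely labelled chain with no first element. (For calibration: the paper itself does not prove this statement at all; it quotes it as Lemma 2.2 of Boudabbous--Delhomm\'e \cite{boudabbous-delhomme}, the supporting machinery being the tree correspondence of the appendix, Theorem \ref{thm:correspondence}. So any self-contained proof must carry exactly the weight you defer.) And that central case is what your proposal does not prove. You first assert that the gaps of a maximal nested family of modules give blocks whose mutual adjacency ``is uniform and is recorded by a single label $r(i)$,'' and then concede, two paragraphs later, that two-valuedness, \emph{left-determination}, density and the absence of a first element are ``the delicate heart of the argument,'' still to be supplied. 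A plan that defers precisely the statement to be proved is not a proof; worse, the deferred step does not follow from your setup. That each block $G_i$ is a module only makes the adjacency of a fixed outside vertex constant \emph{on} $G_i$; it does not make the adjacency between $G_i$ and $G_j$ depend on $\min(i,j)$ alone, which is equivalent to every final-segment union $\bigcup_{j\ge i}G_j$ being a module. A maximal nested family of \emph{arbitrary} modules carries no reason for this to hold: maximality of the family is not the relevant lever.

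What fills the gap is the strong-module machinery, not nested families of general modules. Since $G$ is a cograph and $G$, $G^c$ are connected, $V$ is not robust: otherwise its Gallai quotient would be prime (impossible, since the quotient is an induced subgraph of a cograph; see Theorem \ref{thm:6.8}), linear (impossible by symmetry), or constant (contradicting connectivity of $G$ or of $G^c$). Fix $x\in V$. Then for every $y\neq x$ the least strong module $S(x,y)$ containing $x$ and $y$ is proper; the modules $S(x,y)$ form a chain under inclusion (strong modules through a common point are comparable) whose union is $V$ and which has no greatest element --- this is where ``no first element'' really comes from. For each such $A$ the Gallai quotient is constant of type $t(A)\in\{0,1\}$, so the union $G_A:=A\setminus C_x(A)$ of the components of $A$ other than the component $C_x(A)$ containing $x$ is a module of $G$; the sets $G_A$, indexed by the chain of the $A$'s in reverse inclusion (with $x$ supplying a last block), partition $V$, and left-determination holds because any strong module $B\subsetneq A$ containing $x$ lies inside $C_x(A)$ (Lemma \ref{decomp-strong}), so the adjacency between $G_A$ and $G_B$ is $t(A)$, depending only on the larger module. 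Your coarsening argument for density and reducedness, and your observation that a first label would disconnect $G$ or $G^c$, are correct but only operate once this structure exists. Your alternative suggestion --- reading the trichotomy off the existence or non-existence of a least element of the decomposition tree of Theorem \ref{thm:correspondence} --- is indeed the cleaner route and the one implicit in the paper, but it too requires converting ``no least element'' into the chain-sum form by essentially the computation above.
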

With some effort, one can (partially) evaluate the number of siblings of a direct sum or a complete sum in relation with the number of siblings of its components. For the case of labelled chains,  more substantial  information is needed. This information comes from the tree decomposition  of a graph, that we  consider here only for cographs. 
The \emph{tree decomposition} of a cograph $G$ is a labelled tree ${\bf T}(G):= (R(G), v)$ defined as follows:
The nodes of the tree $R(G)$ are the  robust modules of $G$; a  module $A$ is \emph{robust} if it  is the least strong module containing two vertices $a$, $b$ of $G$ (a module is \emph{strong} if it is either comparable to or disjoint from  every module).  Non-trivial robust modules are labelled with one of two symbols $0$ and $1$. If $A$ is the least robust module containing two distinct vertices $a$ and  $b$  of  $G$, the label $v(A)$ is  $1$ if $\{a,b\}$ forms an edge, whereas it is $0$ if $\{a,b\}$ does not form an  edge (it turns out that the  label does not depend upon the choice of $a$ and $b$). The order on the nodes of the tree is reverse inclusion. The graph $G$ can be recovered from the labelled tree. A description of these labelled trees is given in the Appendix. An example made of a clique and an independent set (with some extra edges) is given in Figure  \ref{Examples2}. 

\begin{center}
\begin{figure}[ht]
\includegraphics[width=4in]{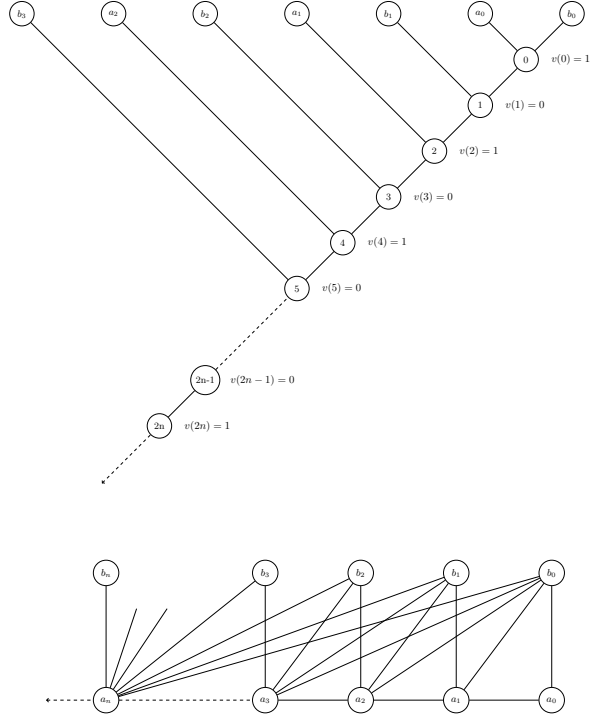}
\caption{A labelled tree with no root and the corresponding cograph}
 \end{figure}
 \label{Examples2}
\end{center}

The first two cases of Theorem \ref{decomposition-cographs} correspond to the case of a cograph whose  decomposition tree has a least element; the last case to the non-existence of a least element.

The stated condition in Theorem \ref{label:chain/antichain} amounts to the fact that the decomposition tree has no least element. The conclusion follows from the next two lemmas.

\begin{lemma}\label{lem:initialsegment}
Let $G$, $G'$ be two isomorphic cographs such that their tree decomposition has no least element. If $G= \Sigma C$  and $G'= \Sigma C'$ where  $C:=(I, \leq, \ell)$ and $C':=(I', \leq', \ell')$ are  two reduced labelled chains then there are two infinite  initial segments $W$ of $I$ and $W'$ of $I'$  and an isomorphism $h$ of the induced labelled chains $C_{\restriction W}$  and  $C'_{\restriction W'}$. \end{lemma}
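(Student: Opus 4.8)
The plan is to exploit the rigidity coming from the tree decomposition together with the structure of a reduced labelled chain. Since $G$ and $G'$ have tree decompositions with no least element, each is, by the third case of Theorem \ref{decomposition-cographs}, the sum $\Sigma C$, respectively $\Sigma C'$, over a reduced labelled chain with no first element; moreover the reduced form is essentially canonical, so the labelled chain is recoverable from the isomorphism type of the cograph. The key point I would establish first is that an isomorphism $f\colon G\to G'$ must respect the module structure: it carries strong modules of $G$ to strong modules of $G'$, hence robust modules to robust modules, and therefore induces an isomorphism of the decomposition trees ${\bf T}(G)$ and ${\bf T}(G')$ preserving the $\{0,1\}$-labels. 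Because the decomposition tree in case (3) is a chain (its nodes are the initial segments of $I$, ordered by reverse inclusion), this tree isomorphism is the same data as an isomorphism of the labelled chains $C$ and $C'$ in the sense we want.

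The next step is to extract an actual initial-segment isomorphism from the abstract tree isomorphism. Each element $i\in I$ corresponds to a block $G_i$ sitting inside $G$, and the nodes of the tree above (i.e., containing) a fixed vertex form a chain order-isomorphic to an initial segment of $I$. I would pick a vertex $x$ of $G$ lying in some block $G_{i_0}$, look at its image $f(x)$ lying in some block $G'_{j_0}$, and use the induced tree isomorphism to match the chain of robust modules containing $x$ with the chain of robust modules containing $f(x)$. This produces an order-isomorphism $h$ between the initial segment $W:=\{i\in I: i\leq i_0\}$ of $I$ and the initial segment $W':=\{j\in I': j\leq' j_0\}$ of $I'$; because there is no first element, both $W$ and $W'$ are infinite. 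The label-preservation of the tree isomorphism guarantees that $h$ carries the chain-labels $r(i)$ of $C$ to the labels $r'(h(i))$ of $C'$, so $h$ is an isomorphism of the induced labelled chains $C_{\restriction W}$ and $C'_{\restriction W'}$.

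The main obstacle I anticipate is the bookkeeping in the second step: matching the block structure precisely, and in particular verifying that the \emph{block} labels $(G_i,r(i))$ — not merely the $\{0,1\}$ edge-labels — are preserved by $h$, so that $G_i\cong G'_{h(i)}$ as cographs for each $i\in W$. This requires identifying $G_i$ internally as the induced subgraph on a canonically determined set of vertices (those vertices $y$ such that the least robust module containing $x$ and $y$ is exactly the node indexed by $i$), and checking that $f$ carries this vertex set onto the corresponding set for $f(x)$. I would also need to confirm that the reducedness (dense labelling, together with the constraint that a $0$-labelled, resp. $1$-labelled, node is not itself a complete sum, resp. direct sum) is inherited by the induced labelled chains on $W$ and $W'$, so that $C_{\restriction W}$ and $C'_{\restriction W'}$ are honest reduced labelled chains; this is what makes the induced isomorphism well defined rather than artifactual. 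Once the correspondence of blocks and labels is nailed down, the remaining verification that $h$ is order- and label-preserving is routine, and the infinitude of $W$ and $W'$ is immediate from the absence of a first element.
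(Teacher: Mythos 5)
Your first step is sound and matches the paper's opening line: an isomorphism $G\to G'$ carries strong modules to strong modules, hence robust modules to robust modules, and therefore induces a label-preserving isomorphism of the decomposition trees ${\bf T}(G)$ and ${\bf T}(G')$. The proposal goes wrong immediately afterwards, on a point of structure: in case (3) of Theorem \ref{decomposition-cographs} the decomposition tree is \emph{not} a chain. Its nodes are \emph{all} robust modules of $G$, including every singleton $\{x\}$ (these are its maximal elements, and the tree is ramified); the chain $I$ corresponds only to the downward-closed chain of ``tail'' modules $A_i:=\bigcup_{j\geq i}G_j$ sitting inside ${\bf T}(G)$. Consequently your claim that the tree isomorphism ``is the same data as an isomorphism of the labelled chains $C$ and $C'$'' --- equivalently, your opening assertion that the reduced labelled chain is recoverable from the isomorphism type of $G$ --- is unjustified and begs the question: if it were true, the lemma would hold trivially with $W=I$ and $W'=I'$. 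The reduced presentation is canonical only towards its infinite lower end; two reduced chains presenting isomorphic cographs may genuinely differ further up, which is exactly why the lemma (and its use in Lemma \ref{lem:main}) is stated for initial segments only.

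The vertex-based construction of your second paragraph does not repair this. The tree isomorphism maps the tails $\{A_i: i\leq i_0\}$ containing your chosen vertex $x$ onto \emph{some} downward-closed chain of robust modules of $G'$ containing $f(x)$, but nothing forces these images to be tails $A'_j$ of $C'$: the image chain and the chain $\{A'_j: j\in I'\}$ may branch apart at some node, and beyond the branch point your map $h$ does not take values in $I'$ at all. Locating that branch point is the actual content of the proof, and it is short: identify ${\bf T}(G)$ with ${\bf T}(G')$, observe that $\{A_i\}_{i\in I}$ and $\{A'_j\}_{j\in I'}$ are both downward-closed chains in this meet-tree, pick $u$ in the first and $v$ in the second, and set $a:=u\wedge v$; by downward-closedness all of $\downarrow a$ lies in both chains, so it yields initial segments $W$ of $I$ and $W'$ of $I'$ matched by the identity on nodes. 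The labels then come along for free (for $i$ below the top of the common part, $G_i=A_i\setminus\bigcup_{j>i}A_j$ is computed from the common tails alone, and $r(i)$ is the type of the node $A_i$ itself), and $W$ is infinite because a meet-tree with no least element has no minimal element. So your anticipated obstacle (label bookkeeping) is indeed the routine part; the missing idea is this meet argument handling the case where the two chains diverge.
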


\begin{proof}Since $G$ and $G'$ are isomorphic, their decomposition trees are isomorphic. We may suppose that they are identical. Let $T$ be such a tree. Then $(I, \leq)$  and $(I', \leq )$ correspond to  maximal chains  in $T$. If these chains are identical, there is nothing to prove ($W=W'=I$). If not, then  since $T$ has no least element  these chains meet in some element, say $a$. Set $W= W':= \downarrow a$ and $h$ be the identity on the initial segment $\downarrow a$.
\end{proof}

\begin{lemma}\label{lem:main} Let $C:= (I,  \leq, \ell)$ be a countable reduced labelled chain with no least element. 
Then there are $2^{\aleph_0}$ reduced  labelled chains with no least element $C_{\alpha}:=  (I_{\alpha}, \leq_{\alpha}, \ell_{\alpha})$  such that: 
\begin{enumerate}
\item all their labelled sums are siblings of the labelled sum of $C$; 
\item there are no distinct $\alpha$, $\alpha'$ and no nonempty  initial segment  $W$ of $I_{\alpha}$ and $W'$ of $I_{\alpha'}$ such that the induced labelled chains on $W$ and $W'$ are isomorphic. 
\end{enumerate}
\end{lemma}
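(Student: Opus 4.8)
\emph{Plan.} The decisive observation is that, since $I$ has no least element, every \emph{nonempty} initial segment $W$ of $I_{\alpha}$ is coinitial: being downward closed and nonempty it contains all elements below any of its points, and with no least element it reaches arbitrarily far down. Thus $(C_{\alpha})_{\restriction W}$ has the same ``germ at the bottom'' as $C_{\alpha}$, and condition (2) is really about coinitial germs: it suffices to arrange that for $\alpha\neq\alpha'$ no coinitial initial segment of $C_{\alpha}$ is isomorphic, as a labelled chain, to a coinitial initial segment of $C_{\alpha'}$. I would therefore encode the parameter $\alpha$ entirely into the coinitial structure. Fix once and for all a strictly decreasing coinitial sequence $i_{0}>i_{1}>i_{2}>\cdots$ in $I$ (one exists because a countable chain with no least element has coinitiality $\aleph_{0}$); after thinning it I may assume each gap $(i_{n+1},i_{n})$ still contains original elements of both labels.

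For the encoding, fix a family $(M_{k})_{k\geq 1}$ of pairwise non-isomorphic finite cographs, each a direct sum of at least two nonempty connected cographs — hence disconnected, hence not a complete sum, hence admissible as an $r=0$ label — and chosen \emph{recognizable} up to isomorphism inside $C_{\alpha}$ (for instance as a strictly $\hookrightarrow$-increasing sequence chosen to outgrow coinitially the labels of $C$, so that an isomorphism of germs cannot confuse a marker with an original block). To each $\alpha$ attach a sequence $(m^{\alpha}_{n})_{n<\omega}$ of positive integers and build $C_{\alpha}$ from $C$ by inserting, for every $n$, a new point $x_{n}$ in the gap just above $i_{n+1}$ carrying the label $(M_{m^{\alpha}_{n}},0)$. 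A routine verification using the density of the original labelling (inserting a dual $r=1$ point next to each $x_{n}$ where needed) shows $C_{\alpha}$ is again reduced with no least element. The inserted points form, in every nonempty initial segment $W$, a coinitial subchain of order type $\omega^{*}$ whose labels, read from the top down, are a tail of $(M_{m^{\alpha}_{n}})_{n}$.

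This yields condition (2). A labelled-chain isomorphism preserves labels exactly, so an isomorphism $\phi$ between coinitial initial segments of $C_{\alpha}$ and $C_{\alpha'}$ must carry inserted points to inserted points with equal $M$-label, respecting order; as the only order-isomorphism between two $\omega^{*}$-chains is a shift, this forces $(m^{\alpha}_{n})$ and $(m^{\alpha'}_{n})$ to agree up to a finite shift of indices. Choosing the $2^{\aleph_{0}}$ sequences pairwise inequivalent under this tail-shift relation — possible since $\mathbb{N}^{\omega}$ splits into $2^{\aleph_{0}}$ tail-shift classes, each countable — produces $2^{\aleph_{0}}$ chains $C_{\alpha}$ satisfying (2); with Lemma \ref{lem:initialsegment} this is exactly what will make the associated cographs pairwise non-isomorphic.

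The remaining point, condition (1), is the main obstacle. One direction of equimorphy is immediate: inclusion of the original positions is order-preserving and label-matching, and by the elementary criterion that a strictly increasing $f$ with $r=r'\circ f$ and $G_{i}\hookrightarrow G'_{f(i)}$ induces an \emph{induced} cograph embedding $\Sigma C\hookrightarrow\Sigma C'$ (block-to-block adjacency in a labelled sum is governed solely by the $r$-value of the lower block), we get $\Sigma C\hookrightarrow\Sigma C_{\alpha}$. The hard direction is $\Sigma C_{\alpha}\hookrightarrow\Sigma C$, i.e. one must \emph{absorb} the markers. In the spread form of the criterion a marker with $r=0$ on $m$ vertices is realizable in $\Sigma C$ on any $m$ positions all of $r$-value $0$, of which density supplies coinitially many; the real difficulty is to do this for all markers \emph{simultaneously} together with a copy of $C$, which amounts to embedding $C$ into a coinitial part of itself while freeing a coinitial supply of $r=0$ slots. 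I expect this coinitial self-embedding to be the crux: it should rest on the density of the reduced labelling together with the fact — foreshadowed in the Introduction and due to Thomass\'e and Laver — that the relevant labelled chains are well (indeed better) quasi-ordered, which is what licenses the inductive construction. A secondary subtlety, to be watched throughout, is that this absorption must stay compatible with keeping the markers a \emph{rigid order-invariant} of the germ, since cograph embeddings ignore the chain order and reducedness forbids constant runs.
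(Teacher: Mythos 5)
Your handling of condition (2) follows the same strategy as the paper (plant recognizable markers coinitially, observe via Lemma \ref{lem:initialsegment} that an isomorphism of nonempty initial segments induces a shift on the $\omega^*$-indexed markers, then choose $2^{\aleph_0}$ tail-shift-inequivalent parameter sequences, which is the content of Lemma \ref{laflamme-al}). But your recognizability device is flawed, and the flaw is not cosmetic. You propose markers $M_k$ that are finite cographs chosen to ``outgrow coinitially the labels of $C$''. The labels $G_i$ are arbitrary countable cographs and may coinitially exhaust all finite cographs, so outgrowing them with finite markers is impossible in general; and any fix in that direction collides head-on with condition (1): markers large or exotic enough not to be confused with the $G_i$'s are precisely the ones that cannot be absorbed when you must embed $\Sigma C_{\alpha}$ back into $\Sigma C$. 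Indeed your stated absorption step is wrong: vertices taken one from each of $m$ distinct blocks at positions of $r$-value $0$ always induce an \emph{independent set}, so a marker ``with $r=0$ on $m$ vertices'' is realizable that way only if it is edgeless, whereas your markers are disconnected but not edgeless. The paper resolves exactly this tension with a trick you are missing: the markers are cliques and independent sets of \emph{even} size, and simultaneously the original chain is modified so that every block that is a clique or independent set of even size is enlarged to odd size (the set $Ev(J)$). Then the markers are the unique even-sized homogeneous blocks (recognizability is automatic), and they are small homogeneous graphs that the absorption argument can swallow.

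The decisive gap, however, is that you do not prove condition (1) at all: you name the coinitial self-embedding as ``the crux'' and gesture at w.q.o./b.q.o. as what ``licenses the inductive construction'', but that is where essentially all the work of the lemma lies (the paper says this is the most difficult part of the whole argument). What is actually needed, and what the paper supplies in Lemma \ref{lemma:equimorphy}, is: choosing the initial segment $J$ so that $C_{\restriction J}$ is left-indecomposable (item (2) of Lemma \ref{lemma:basicindec}, resting on Laver's theorem that countable chains labelled by a b.q.o.\ form a b.q.o.); writing $C_{\restriction J}$ as a quasi-monotone $\omega^*$-sum of indecomposables (item (3) of Lemma \ref{lemma:basicindec}); the doubling lemma $\underline{2}.C_{\restriction J}\leq C_{\restriction J}$ for indecomposable labelled chains (Lemma \ref{lem:indec}, itself proved by induction along the b.q.o.); and finally the chain of embeddings $\Sigma {C_{f}}_{\restriction \overline{J}}\leq \Sigma D\leq \Sigma\, \underline{2}.C_{\restriction J}\leq \Sigma C_{\restriction J}$, in which both the enlarged blocks $\overline{G}_i$ and the inserted markers are absorbed into the doubled copies. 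Without this machinery (or a substitute for it), the chains $C_{\alpha}$ you construct are not known to be siblings of $C$, so the proposal establishes only the easy half of the lemma.
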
 

\subsection{Sketch of  the proof of Lemma \ref{lem:main}}\label{subsection:sketch} 

We start with a countable reduced labelled chain $C:= (I, \leq, \ell)$ with $\ell (i):= (G_i, v(i))$ for each $i\in I$. We suppose that $(I, \leq)$ has no least element. 

We select an initial segment $J$ of $I$ such that the restriction $C_J:= (J, \leq_{\restriction J},  \ell_{\restriction J})$ is indecomposable on the left (see Subsection \ref{subsectionlabelled} below for the definition and existence). Let $Ev(J)$ be the set of $i\in J$ such that  $G_i$ is  a clique or an independent set of even size.  To each map $f:\N \rightarrow \{0, 1\}$ we associate a reduced labelled chain $C_f$ as follows.  
We select an infinite descending sequence $(a_n)_{n\in \N}$ of elements of $J$  coinitial in $J$ and such that $v(a_n)= 1$ for each $n\in \N$. Now we insert   infinitely many elements $b_n, c_n$ such that $b_n$ covers $a_n$ and $c_n$ covers $b_n$.   Let $\overline J:= J\cup \{b_n, c_n: n\in \N\}$, $K:= I\setminus J$ and  $\overline I= \overline J \cup K$.  
Let  $f:\N \rightarrow \{0,1\}$. We define a labelling $\ell_f$ on $\overline I$, with $\ell_f(i):= (\overline G_i, v_f(i))$  as follows. If  $i\in I\setminus Ev(J)$ we set $\ell_{f}(i)= \ell (i)$. If  $i\in Ev(J)$, then $v_f(i)= v(i)$ and  $\overline G(i)$ extends $G_i$ to an extra element in such a way  that the new graph is a clique if $G_i$ is a clique and an independent set, otherwise.  If $i\in \{b_n, c_n\}$, recalling that $v(a_n)= 1$,  we set  $v_f (b_n)=0, v_f(c_n)= 1$, 
 $\overline G_{i}$ being an independent set, resp.,    a clique, if $i= b_n$, resp. $i= c_n$,  of  size $2f(n)+2$. 
 Due to the labelling,  the labelled chain $C_{f}:= (\overline I, \leq, \ell_{f})$ is reduced. If $f$ and $f'$ are two maps from $\N$ to $\N$ such that the sums $\Sigma C_f$ of $C_f:= (\overline I, \leq, \ell_f)$ and $\Sigma C_{f'}$ of $C_{f'}:= (\overline I, \leq, \ell_{f'})$  are isomorphic then according to Lemma \ref{lem:initialsegment} there are two initial  segment $W$ and $W'$ of $\overline I$ and an  isomorphism $h$ from $W$ onto $W'$ preserving the labels, that is the labels $\ell_f (i):=(\overline G_i, v(i))$ and $\ell_{f'}(h(i)):=(\overline G_{h(i)}, v(h(i))$ are isomorphic, meaning that $\overline G_i$ and $\overline G_{h(i)}$ are isomorphic and $v_{f}(i)= v'_{f}(h(i))$ for each $i\in W$.  If $i= b_n$ then necessarily $h(b_n)=b_m$ for some $m$. Indeed, $\overline G_{h(i)}$ must be an independent set of even size, thus $h(b_n) \not \in Ev(J)$ hence $h(b_n)$ must be either some $b_m$ or some $c_m$. Since $v(h(b_n))=v(b_n)=  0$ this is some $b_m$. Consequently,  there are two final segments  $D$ and $D'$ of $\N$ and an isomorphism $h$ from $D$ to $D'$ such that $f'(h (n))=f(n)$. Thus, in order to obtain  $2^{\aleph_0}$ non isomorphic reduced labelled chains, we may apply the following lemma. 
 
 \begin{lemma}\label{laflamme-al} There is a set of  $2^{\aleph_0}$ maps $f$ from $\N$ into $\{0,1\}^{\N}$ such that  for every pair $f$,  $f'$  of distinct maps, and every isomorphism $h$ from a final segment  $D$ onto another  $D'$ there is some $n$ such that $f(n)\not = f'(h(n))$. 
 \end{lemma}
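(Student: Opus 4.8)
The plan is to exhibit an explicit family $\{f_x : x \in \{0,1\}^{\N}\}$ indexed by the reals, engineered so that each single value $f_x(n)$ simultaneously records the index $n$ and the parameter $x$. This \emph{rigidity} is what will make the universal quantifier over all isomorphisms $h$ essentially harmless: no shift can ever align two of our encodings.

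First I would record the only structural fact needed about the maps $h$. A final segment of $(\N,\leq)$ is a set $D=\{k,k+1,\dots\}$, each such $D$ is order-isomorphic to $\N$, and the unique order-isomorphism between two final segments $D=\{k,k+1,\dots\}$ and $D'=\{k',k'+1,\dots\}$ is the translation $h(n)=n+t$ with $t=k'-k$. Thus every isomorphism $h$ occurring in the statement is a shift $n\mapsto n+t$ for some $t\in\Z$, defined on a final segment of $\N$ and taking values in $\N$ (so $f_y(h(n))$ is always defined for $n\in D$).

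Next I would fix the encoding. For $n\in\N$ and $x=x_0x_1x_2\cdots\in\{0,1\}^{\N}$ set
\[
  f_x(n):=\underbrace{1\cdots 1}_{n}\,0\,x_0x_1x_2\cdots\ \in\{0,1\}^{\N}.
\]
The key point is that $f_x(n)$ is decodable: $n$ is the position of its first $0$, and $x$ is the tail following that $0$. Hence $(n,x)\mapsto f_x(n)$ is injective. This yields two facts at once. On one hand $x\mapsto f_x$ is injective (read off $x$ from $f_x(0)$), so the family $\{f_x:x\in\{0,1\}^{\N}\}$ has cardinality $2^{\aleph_0}$. On the other hand $f_x(n)=f_y(m)$ forces $n=m$ and $x=y$. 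The verification of the separation property then reduces to a one-line case split: given $x\neq y$ and $h(n)=n+t$ with domain a final segment $D$, compare $f_x(n)=1^n0\,x$ with $f_y(h(n))=1^{\,n+t}0\,y$ for $n\in D$; if $t\neq 0$ they differ in their number of leading $1$'s, and if $t=0$ they differ in their tails because $x\neq y$. Either way $f_x(n)\neq f_y(h(n))$ for \emph{every} $n\in D$, which is much stronger than the required existence of a single such $n$.

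I do not expect a genuine obstacle once the encoding is in hand; the difficulty is conceptual rather than computational. The step deserving care is the choice of strategy itself: a purely cardinality-based argument does not work, because the relation ``$f$ agrees with some shift of $f'$ on a final segment'' has equivalence classes of size $2^{\aleph_0}$ (one may perturb a map at finitely many coordinates with arbitrary values, and also shift by any $t\in\Z$), so a transfinite-recursion construction avoiding previously forbidden maps stalls. This is precisely why I would commit to the explicit index-encoding family above, for which the troublesome quantifier over all isomorphisms $h$ collapses to the trivial comparison of leading $1$'s and tails.
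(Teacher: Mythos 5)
Your argument is internally sound, but it proves a trivialized misreading of the lemma rather than the lemma the paper actually needs: you have exploited what is evidently a typo in the statement. Read in context, the codomain must be $\{0,1\}$, not $\{0,1\}^{\N}$. In Subsection \ref{subsection:sketch}, the maps $f$ to which Lemma \ref{laflamme-al} is applied are maps $f:\N\rightarrow\{0,1\}$, and the value $f(n)$ is used as a \emph{number}: the graphs $\overline G_{b_n}$ and $\overline G_{c_n}$ inserted into the chain $C_f$ are an independent set, resp.\ a clique, of size $2f(n)+2$, so each $f(n)$ must be a single bit. The paper's own proof confirms this, since it produces characteristic functions of subsets of $\N$. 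Your construction $f_x(n)=1^n0\,x$ depends essentially on the codomain being rich enough that one value can encode both the index $n$ and the parameter $x$; with a two-element codomain each value carries one bit, the map $(n,x)\mapsto f_x(n)$ cannot be injective, and the trick is unavailable. Thus the entire difficulty of the lemma --- defeating every shift $h$ when agreement can only be contested value-by-value with binary values --- is bypassed rather than solved, and your proof cannot be repaired to yield the statement that Lemma \ref{lem:main} requires.

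For comparison, the paper's proof handles the binary-valued case as follows: fix $X:=\{x_n:n\in\N\}$ with $x_0=0$ and $x_{n+1}=x_n+n$ (increasing gaps), so that $(X+t)\cap X$ is finite for every $t\neq 0$; take an almost disjoint family $\mathcal A$ of $2^{\aleph_0}$ infinite subsets of $X$ and use the characteristic functions $\chi_A$, $A\in\mathcal A$. If $\chi_A(n)=\chi_{A'}(n+t)$ for all $n$ in a final segment, then $A+t$ and $A'$ are almost equal infinite sets; for $t\neq 0$ this contradicts the finiteness of $(X+t)\cap X$ (since $A+t\subseteq X+t$ and $A'\subseteq X$), and for $t=0$ it contradicts almost disjointness. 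Incidentally, your closing claim that a cardinality-based argument ``does not work'' is also an artifact of the misreading: for $\{0,1\}$-valued maps, the equivalence classes of the relation ``agrees with some shift of the other on a final segment'' are \emph{countable} (countably many shifts $t$, countably many final segments, finitely many free values below), so there are $2^{\aleph_0}$ classes and selecting one representative from each class proves the lemma as well; it is only for a continuum-sized codomain that this route stalls.
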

 
 This lemma appears as  Lemma 4 p. 39 of \cite{laflamme-al}. For the reader's convenience we reproduce the short proof given there. 
 
\begin{proof}We start with a subset $X:=\{x_n: n \in \N\}$ where $x_0=0$  and $x_{n+1}=x_n + n$ (one just need the gaps increasing). Now, let $\mathcal A$  be an almost disjoint family of $2^{\aleph_0}$ infinite subsets of  $X$.  For any $A\in \mathcal A$, and $n>0$, $A+n$ is almost disjoint from $X$, and thus almost disjoint from any other $A'$. The characteristic functions of members of $\mathcal A$ have the required property. 
\end{proof}

In order to complete the proof of Lemma \ref{lem:main} it suffices to prove  that each sum $\Sigma C_{f}$ of $C_{f}$ is  equimorphic to the sum $\Sigma C$ of $C$.  This is quite easy if in $C$ no  $G_i$ has an even size (e.g. each $G_i$ is finite but of odd size or is infinite). Some difficulties arise in general. Our  proof,  given in Lemma \ref{lemma:equimorphy} of the next Subsection,  is based on properties of chains labelled by a  quasi order $Q$.  
 
 \subsection{Chains labelled by a better quasi order}\label{subsectionlabelled}
We consider below chains labelled by a quasi-ordered-set.  There is a strong similarity between properties of countable chains and properties of labelled countable chains, provided that the quasi-ordered-set  is a better quasi ordering (in short b.q.o.).  We present the facts we need. We refer to chapter 10 of \cite{rosenstein} and to chapter 6 and 7 of \cite {fraisse} for properties of chains and an exposition of the solution of Fra\"{\i}ss\'e's conjectures by Laver. We refer to the papers of Laver \cite{laver, laver2} and also to \cite{pouzet70} for properties of labelled chains. 

A \emph{labelled chain} is a pair   $C:= (I, \ell)$ where  $\ell$ is a map from $I$ to a quasi-ordered-set $Q$.  A
$Q$-\emph{embedding} of a labelled chain $C:=(I,\ell)$ into another 
$C':=(I',\ell')$ is an order embedding   $f:I \rightarrow I'$ such that $\ell(x)
\leq \ell'(f(x))$ for all $x \in I$, a fact that we denote by $C\leq C'$. These 
two labelled chains  are \emph{equimorphic}, and we set $C\equiv C'$,  if they are $Q$-embeddable in each other. They are 
\emph{isomorphic} if there is an isomorphism $\phi$ from $I$ to $I'$ such
that $\ell' \circ \phi = \ell$.
A labelled chain $C:= (I, \ell)$   is \emph{additively indecomposable}, or briefly \emph{indecomposable},  if for every partition of $I$ into an initial interval $I$ and a final interval   $F$,  $C$ embeds into $C_{\restriction I}$ or into $C_{\restriction F}$. We say that $C$ is \emph{left-indecomposable}, resp., \emph{strictly left-indecomposable},  if $C$ embeds in every non-empty initial segment, resp. if $C$ is left-indecomposable  and embeds in no proper final segment. 
The {\it right-indecomposability}
\index{right-indecomposable} and the {\it strict
right-indecomposability} \index{strictly right-indecomposable} are
defined in the same way. The notions of indecomposability and
strict-right or left-indecomposability are preserved under
equimorphy (but not the right or the left-indecomposability). 

The \emph{sum} $\Sigma_{i\in I} C_i$ of labelled chains over a chain (not over a labelled chain) is defined as in the case of chains. If $I$ is the $n$-element chain $\underline n:= \{0, 1, \dots,  n-1\}$ with $0<1< \dots <n-1$, the sum is rather denoted by $C_0+C_1+ \cdots + C_{n-1}$.  With the notion of sum, a labelled chain $C$ is indecomposable if $C\leq C_0+C_1$ implies $C\leq C_0$ or $C\leq C_1$.
A sequence $(C_n)_{n<\omega}$ of labelled chains  is {\it
quasi-monotonic} \index{quasi-monotonic} if $\{m: C_{n} \leq
C_{m} \}$ is infinite for each $n$, its sum
$C:=\Sigma_{n<\omega} C_{n}$ is right-indecomposable or
equivalently $C^*=\Sigma^*_{n<\omega}C_{n}$ is
left-indecomposable.

  Let $C:= (I, \leq, \ell)$ be a labelled chain. Two elements $x, y$ of $I$ are \emph{equivalent} and we set $x\equiv_C y$ if $C$ does not embed in  the restriction of $C$ to the interval determined by $x$ and $y$. If $C$ is indecomposable  then this relation is an equivalence relation. Equivalence classes are intervals of $I$. Moreover, either all the elements of $I$ are equivalent, that is there is just one equivalence class, or the quotient $I/\equiv_C$ is dense and if $F$ is any equivalence class,  $C$ does not embed into $C_{\restriction F}$.

  The following lemma lists the properties we need. The first three are due to Laver, see  \cite{laver} p. 109 for the first two and \cite{laver2} p. 179  for the third. For the reader's convenience, we give a (short) proof.  For this purpose, let  $\mathcal D_{\leq \omega}$ be the collection of countable chains quasi-ordered by embeddability and let $Q^{\mathcal D_{\leq \omega}}$ be the collection of countable chains, labelled by a quasi-ordered set $Q$.  According to  Laver \cite{laver}, $\mathcal {D}_{\leq \omega}$  quasi-ordered by embeddability is better-quasi-ordered (b.q.o.) and  more strongly,  the collection $\mathcal Q^{\mathcal {D}_{\leq \omega}}$ of  countable chains labelled by a b.q.o $Q$ is b.q.o.(see p. 90 of  \cite{laver}).

 \begin{lemma}\label{lemma:basicindec}Let $C:= (I, \ell)$ be a countable chain labelled over a b.q.o. $Q$. Then 
 \begin{enumerate}
 \item $C$ is   a finite sum of indecomposable labelled chains; 
  
  \item If  $I$ has no least element, then there is  some initial interval $J$  such  that $C_{\restriction J}$ is left-indecomposable; 
  
 \item  If $C$ is left-indecomposable then $C$ is an $\omega^*$ sum $\Sigma^{*}_{n<\omega} C_n$ where each $C_n$ is indecomposable and the set of $m$ such that $C_n$ embeds into $C_m$ is infinite;  
 
 \item If $C$  is indecomposable and  the quotient $I/\equiv_C$ is dense then $C$ is equimorphic to a sum $\Sigma_{q\in \Q} C_q$ such that for every $p<q$ and $r$ in $\Q$ there are  $s_0, \dots, s_{n_{m-1}}$ with $p<s_0<\cdots < s_{n_{m-1}}<q$ and $C_r\leq C_{s_0} + \cdots +C_{s_{n_{m-1}}}<C$.  

\end{enumerate}
 \end{lemma}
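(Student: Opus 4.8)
My plan rests on a single engine: since $Q$ is a b.q.o., Laver's result makes the collection $\mathcal Q^{\mathcal D_{\leq \omega}}$ of countable $Q$-labelled chains a w.q.o.\ under embeddability. In particular it is well founded for the strict order (no infinite strictly descending sequence) and every infinite sequence in it has an infinite increasing subsequence. I would run each of the four parts as an induction or a minimisation against this well-foundedness, together with the stated fact that a quasi-monotone sum is right/left-indecomposable.

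For part (1) I would induct on $C$ in the well-founded order: if $C$ is indecomposable there is nothing to prove; otherwise the sum-form of decomposability yields a splitting $C=C_0+C_1$ with $C\not\leq C_0$ and $C\not\leq C_1$, so $C_0,C_1<C$ strictly, each is a finite sum of indecomposables by induction, and concatenation finishes. For part (2), assuming $I$ has no least element, I would look at the family $\mathcal F$ of all $C_{\restriction J}$ with $J=\{x:x<i\}$ for $i\in I$ (each such $J$ again has no least element), together with $C$, and pick $C_{\restriction J}$ \emph{minimal} in $\mathcal F$ by well-foundedness. For any nonempty initial segment $J''$ of $J$ and any $i\in J''$ one has $\{x<i\}\subseteq J''$, so $C_{\restriction J}\leq C_{\restriction\{x<i\}}\leq C_{\restriction J''}$ by minimality; hence $C_{\restriction J}$ embeds into all its nonempty initial segments, i.e.\ it is left-indecomposable.

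For part (3), let $C$ be left-indecomposable, so $I$ has no least element; cutting along a coinitial descending sequence writes $C=\Sigma^{*}_{n<\omega}E_n$, and refining each $E_n$ by part (1) gives $C=\Sigma^{*}_{n<\omega}D_n$ with every $D_n$ indecomposable. Call $n$ \emph{recurrent} if $D_n\leq D_m$ for infinitely many $m$. If the set $B$ of non-recurrent indices were infinite, then $(D_n)_{n\in B}$ would (by w.q.o.) have an infinite increasing subsequence, forcing its first term to be recurrent, a contradiction; so $B$ is finite, say $B\subseteq\{0,\dots,M\}$. The leftmost tail $\Sigma^{*}_{n>M}D_n$ is an initial interval in which every summand embeds into infinitely many later ones (infinitely many of the witnessing $m$ exceed $M$), hence is quasi-monotone and so left-indecomposable; as $C$ is left-indecomposable it embeds into this initial segment, which also embeds into $C$, giving the desired quasi-monotone $\omega^{*}$-decomposition up to equimorphy.

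For part (4), $C$ is indecomposable with $I/\equiv_C$ dense, hence (being countable and dense) isomorphic to $\Q$, so $C=\Sigma_{q\in\Q}C_{\restriction F_q}$ over its classes, where $C\not\leq C_{\restriction F}$ for every class $F$. I would first note that the restriction to any open interval is equimorphic to $C$: if $p<p'<q'<q$ with $x\in F_{p'}$, $y\in F_{q'}$, then $x\not\equiv_C y$ gives $C\leq C_{\restriction[x,y]}$ and $[x,y]$ sits inside the interval. Then I would pass to the shuffle normal form: refining classes by part (1) and invoking the w.q.o.\ of $\mathcal Q^{\mathcal D_{\leq \omega}}$, one shows $C$ is equimorphic to $\Sigma_{q\in\Q}C_q$ whose pieces come from finitely many indecomposable types, each strictly below $C$ and each recurring densely. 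For $p<q$ and $r$, the type of $C_r$ then occurs at some $s_0\in(p,q)$, so $C_r\leq C_{s_0}$; and since $C\not\leq C_{s_0}$ while indecomposability forces $C\not\leq$ any finite sum of pieces none of which is $\geq C$, the finite sum is $<C$. The delicate point, and the main obstacle, is precisely this finiteness of the shuffle: manufacturing an equimorphic copy from only finitely many piece-types is exactly where the strength of b.q.o.\ is indispensable, the counting of part (3) being its prototype.
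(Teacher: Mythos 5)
Your parts (1) and (2) are correct and essentially identical to the paper's proof (well-founded induction for (1); a minimal element of $\{C_{\restriction \downarrow a}:a\in I\}$ for (2)). Part (3) is a correct argument but lands short of the statement: the paper shows directly that \emph{every} summand is recurrent, with no exceptional set at all. Namely, since $C$ is left-indecomposable, $C$ --- and hence the finite piece $C_{n+1}+C_n$ --- embeds into the initial segment $\Sigma^{*}_{k<m<\omega}C_m$ for each $k$; because to the right of any point of an $\omega^{*}$-sum there are only finitely many summands, the image of $C_n$ (pinned above the image of a point of $C_{n+1}$) lies in a finite sum of $C_m$'s with $m>k$, and indecomposability of $C_n$ then places it inside a single $C_m$, $m>k$. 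Your route via w.q.o.\ only bounds the non-recurrent indices and forces a passage to a tail, so you obtain the quasi-monotone decomposition for a chain \emph{equimorphic} to $C$, not for $C$ itself. The literal form is what the paper invokes later (in Lemma \ref{lemma:equimorphy} the decomposition is applied to the actual domain of $C_{\restriction J}$ to build the auxiliary chain $D$), so you should replace your argument by the direct one, or re-verify the downstream constructions under equimorphy.

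The genuine gap is part (4). You reduce the conclusion to the claim that $C$ is equimorphic to a shuffle built from \emph{finitely many} indecomposable piece-types, each recurring densely, and you explicitly leave that finiteness unproved (``the delicate point \dots\ the main obstacle''). That claim is a Laver-type shuffle theorem, strictly stronger than what the statement asks for --- the statement allows $C_r$ to be covered by a \emph{finite sum} of pieces in $(p,q)$, not by a single piece of the same type --- and deferring it is not a proof. The paper avoids it by a move you never make: apply part (1) \emph{to the quotient}. Form the labelled chain $D:=(I/\equiv_C,\ \ell_{\equiv_C})$ whose label at a class $F$ is the labelled chain $C_{\restriction F}$ itself; since $\mathcal Q^{\mathcal D_{\leq\omega}}$ is b.q.o.\ by Laver's theorem, $D$ is a countable chain labelled by a b.q.o., so by (1) it is a finite sum of indecomposable labelled chains. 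Indecomposability of $C$ makes $C$ equimorphic to its restriction over one of these finitely many quotient-intervals; discarding the extremities of that interval (possible since $C_{\restriction F}<C$ for every class $F$) makes the quotient of the chosen interval isomorphic to $\Q$. The required recurrence then follows from the two facts you do have --- each class is a finite sum of indecomposables by (1), and $C\leq C_{\restriction[x,y]}$ whenever $x\not\equiv_C y$ --- together with the observation that a finite sum of classes cannot embed $C$ (otherwise indecomposability of $C$ would force $C$ into a single class). Note also that your parenthetical ``(being countable and dense) isomorphic to $\Q$'' overlooks possible endpoints of the quotient, which is exactly what the paper's removal of extremities handles.
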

 \begin{proof} 
 $(1)$ Since $\mathcal Q^{D_{\leq \omega}}$ is b.q.o. it is w.q.o. hence every  non-empty  subset has a minimal element. If $(i)$ fails,  it fails for some minimal $C$. We claim that $C$ is indecomposable. Indeed, if $C= C_0+C_1$ with $C_0<C$ and $C_1<C$ then $C_0$ and $C_1$ are finite sums of indecomposables thus $C$ is such, contradicting our hypothesis. Hence $C$ is indecomposable, contradicting our hypothesis too. Hence $(2)$ holds.   
 
 $(2)$  For each $a\in I$, let $C_a:= (I_a, \ell_{\restriction I_a})$  where $I_a:= (\leftarrow a]$ ($:= \{x\in I: x\leq a\}$, a set that  we denote also $\downarrow a$). As a subset of a w.q.o. the set  of these $C_a$ has a minimal element. This minimal element is left-indecomposable. 

$(3)$ Pick a coinitial sequence in $I$, say $(x_n)_{n<\omega}$. Write $I$ as $I:= \bigcup_{n<\omega} I_n$ where $I_{0}:= [x_0, \rightarrow[$, $I_{n+1}=[x_{n+1}, x_{n}[$. Write $C$ as   $\Sigma^{*}_{n\in \omega} X_n$, where $X_{n}:=( I_n, \ell_{\restriction I_n})$. Since, by $(i)$,  each $X_n$ is a finite sum of indecomposables, we may rewrite $C$ as a sum  $\Sigma^{*}_{n<\omega} C_n$ where each $C_n$ is indecomposable. Pick $n<\omega$ and any $k>n$. Since $C$ is left-indecomposable, $C$ embeds into  
$\Sigma^{*}_{k< m<\omega} C_m$, hence $C_{n+1}+C_{n}$ embeds in that sum, thus $C_{n}$ embeds in a finite sum of $C_m$, $m>k$. This $C_{n}$  being indecomposable, it embeds into some $C_m$.

$(4)$  Let $D:= (I/\equiv_C, \ell_{\equiv_C})$ be the labelled chain where $\ell_{\equiv_C}(F):= (F, \ell_{\restriction F})$ where $F\in D$.  The set of labels belongs to $\mathcal Q^{D_{\leq \omega}}$ hence is b.q.o. As a labelled chain over a b.q.o.   $D$ is a finite sum of indecomposable labelled chains. The decomposition of $I/\equiv_C$ in finitely many intervals on which the labelled chains are indecomposable induces a decomposition of $I$ into finitely many intervals. Since $C$ is indecomposable, $C$ is equimorphic to its restriction to some interval.  Since $C_{\restriction F}< C$ for each $F\in I/\equiv_C$ we may throw out the  extremities of this interval if any, hence we may suppose that this interval is isomorphic to $\Q$. Since each $C_{\restriction F}$ is a finite sum of  indecomposable labelled chains and $C$ embeds in $C_{\restriction [x, y]}$ for every $x<y$ with $x\not \equiv_C y$,  the conclusion follows.  

 \end{proof}

 Let $C:= (I, \ell)$ be a  chain labelled by a poset $Q$ and $n$ be a positive integer, the \emph{ordinal product} $\underline n.C$ is the labelled chain $(\underline n. I, \ell_n)$  in which $\underline n. I$ is the ordinal product of the $n$-element chain $\underline n:= \{0, \dots n-1\} $ and the chain $I$, that is the ordinal sum of $C$ copies of $\underline n$, and $\ell_{n} (m, i)= \ell(i)$ for every $m\in \{0, n-1\}$, $i\in I$.  
 
 \begin{lemma}\label{lem:indec} Let $C:= (I, \ell)$ be a countably infinite  labelled chain. If the labels belong to a b.q.o.  and  
 $C$ is indecomposable then for every positive integer $n$, the ordinal product $\underline n. C$ embeds in $C$. 
 \end{lemma}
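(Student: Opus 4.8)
The plan is to prove, by induction on $C$ in the collection $\mathcal Q^{\mathcal D_{\leq \omega}}$ of countable labelled chains over $Q$ (which is b.q.o., hence w.q.o., hence well founded, so such induction is legitimate), that $\underline 2 . C\le C$; the general case follows since $\underline m.C\le\underline n.C$ whenever $m\le n$ (embed each block of size $m$ into a block of size $n$, keeping the common label), while iterating $\underline 2.C\le C$ gives $\underline 4.C=\underline 2.(\underline 2.C)\le\underline 2.C\le C$ and generally $\underline{2^{k}}.C\le C$, so $\underline n.C\le\underline{2^{k}}.C\le C$ for $2^{k}\ge n$. Throughout I use that the statement is invariant under equimorphy: if $C\equiv C'$ and $\underline 2.C'\le C'$, then $\underline 2.C\le\underline 2.C'\le C'\le C$ (using $A\le B\Rightarrow\underline 2.A\le\underline 2.B$), so at each stage I may replace $C$ by a convenient equimorphic representative. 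By the dichotomy recorded after the definition of $\equiv_C$, either the quotient $I/\equiv_C$ is dense, or there is a single $\equiv_C$-class.

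In the \emph{dense} case I replace $C$ by the equimorphic sum $D:=\Sigma_{q\in\Q}C_q$ of Lemma \ref{lemma:basicindec}(4), for which $C_q<C$ for every $q$ and $C\le C_{\restriction[x,y]}$ whenever $x,y$ lie in distinct classes. Splitting an interval between two distinct classes into $r$ subintervals first gives that every finite sum of copies of $C$ embeds into $C$; choosing pairwise disjoint closed intervals with endpoints in distinct classes, indexed in $\Q$-order, gives $\Sigma_{q\in\Q}C\le C$. Next I show $\underline 2.C_q\le C$ for each $q$: writing $C_q$ as a finite sum of indecomposables (Lemma \ref{lemma:basicindec}(1)), each infinite summand $B$ satisfies $B\le C_q<C$, so $\underline 2.B\le B$ by the induction hypothesis, while each one-point summand with label $\lambda$ has its doubling embedded into $C$ because every interval of $D$ contains a point of label $\ge\lambda$ (as $\{\lambda\}\le C_q\le C\le C_{\restriction[x,y]}$) and there are infinitely many disjoint such intervals. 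Summing, $\underline 2.C_q$ embeds into a finite sum of copies of $C$, hence into $C$. Finally $\underline 2.C\le\underline 2.D=\Sigma_{q\in\Q}(\underline 2.C_q)\le\Sigma_{q\in\Q}C\le C$.

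In the \emph{one-class} case $C$ is equimorphic to a strictly left-indecomposable or strictly right-indecomposable chain; by the symmetry $\underline 2.C\le C\iff\underline 2.C^{*}\le C^{*}$ I assume the former, and by equimorphy-invariance that $C$ itself is strictly left-indecomposable. By Lemma \ref{lemma:basicindec}(3) write $C=\Sigma^{*}_{n<\omega}C_n$ with each $C_n$ indecomposable and $\{m:C_n\le C_m\}$ infinite. Every block with $n\ge 1$ is a bounded interval, so $C\not\le C_n$ (single $\equiv_C$-class), i.e.\ $C_n<C$; and strict left-indecomposability forces $C\not\le C_{0}$ for the rightmost block $C_0$ (a proper final segment), so $C_0<C$ as well. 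Now $\underline 2.C=\Sigma^{*}_{n<\omega}(\underline 2.C_n)$, and I build the embedding into $\Sigma^{*}_{m<\omega}C_m$ block by block, always choosing target blocks further to the left than those used so far (possible because each relevant index set is infinite): an infinite $C_n$ is sent into a single block $C_{m}$ with $C_n\le C_{m}$, via $\underline 2.C_n\le C_n\le C_{m}$ from the induction hypothesis; a one-point block $\{\lambda\}$ has its two copies placed into two distinct blocks lying to the left of everything used so far and containing a point of label $\ge\lambda$ (again infinitely many by quasi-monotonicity). Order-consistency is automatic from the leftward choice of indices, giving $\underline 2.C\le C$.

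The \textbf{main obstacle} is the one-class case. Two points there need the structure theory rather than bare indecomposability: that a one-class indecomposable chain is equimorphic to a strictly one-sided one (so that the extremal block $C_0$ is also strictly below $C$; equivalently, that a one-class chain cannot be simultaneously left- and right-indecomposable, which is Laver's description of scattered indecomposables), and the handling of the one-point blocks. The latter is the real subtlety: one cannot route a doubled block through a single dominating block, since $\underline 2.X\not\le X+X$ in general and indeed $C+C\not\le C$ for \emph{thin} chains such as $\omega^{*}$; the two copies must land in two different blocks, and it is exactly the quasi-monotonicity of Lemma \ref{lemma:basicindec}(3) (each block embeds into infinitely many others) that supplies enough room arbitrarily far out to do this while respecting the order. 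This is precisely why $\underline n.C\le C$ survives for $\omega^{*}$-like $C$ even though finite sums of $C$ fail to embed into $C$.
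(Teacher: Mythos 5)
Your proof is correct and follows essentially the same route as the paper's: a well-founded induction licensed by Laver's b.q.o.\ theorem, the same dichotomy between a single $\equiv_C$-class (treated via Lemma \ref{lemma:basicindec}(3), with infinite blocks handled by the induction hypothesis and each doubled singleton block split across two distinct target blocks supplied by quasi-monotonicity) and a dense quotient (treated via Lemma \ref{lemma:basicindec}(4)). Your reduction to $n=2$ by iterating powers of $2$, and your more structural packaging of the dense case (first $\Sigma_{q\in\Q}C\le C$, then $\underline 2.C_q\le C$ for each $q$), are only cosmetic variants of the paper's step-by-step constructions, resting on exactly the same ingredients.
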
 
 \begin{proof} It suffices to prove  that the property holds for $n=2$. Indeed, suppose that the property  holds for $n$. Let $C$ be an indecomposable chain, then trivially $(\underline {n+1}).C$ embeds into $2n. C$. Since $\underline {2n}.C= \underline {n}.(\underline {2}. C)$ and,  as it can be checked easily, $\underline {2}.C$ is indecomposable, induction ensures that $\underline {n}.(\underline {2}. C)$ embeds into $\underline {2}.C$. With the fact that  $\underline {2}.C$ embeds into $C$ we obtain that $(\underline {n+1}).C$  embeds into $C$. 
 
 To prove that this property holds,  we use induction. Since the collection of labelled countable chains over a b.q.o. is b.q.o. it is well founded, hence we may suppose that $C$ is a chain such that  every countably infinite indecomposable labelled chain $D$ satisfying $D<C$ satisfies the property. 
 
%
We consider two cases. 
 
\noindent {\bf Case 1.} The equivalence relation $\equiv_C$ has just one class. Then $C$ is either strictly left-indecomposable or strictly right-indecomposable. 
 We may suppose that $C$ is strictly left-indecomposable. We apply Item $(iii)$ of Lemma  \ref{lemma:basicindec}:  $C$ is equimorphic in an $\omega^*$ sum $\sum^{*}_{n<\omega} C_n$ where each $C_n$ is indecomposable and the set of $m$ such that $C_n$ embeds into $C_m$ is infinite. Hence $\underline 2. C=\sum^{*}_{n<\omega}\underline 2.  C_n$. We define an embedding of $\underline 2.C$ into $C$ as follows. Suppose that we have embedded $\sum^{*}_{n< m} \underline 2.C_n$ in $\sum^{*}_{n<\varphi (m)} C_n$. We extends this embedding to $\underline 2.C_m$ as follows. If $C_{m}$ is infinite,  we embeds it into some $C_{k}$ with $k\geq \varphi (m)$. According to the induction hypothesis, $\underline 2.C_{k}$ embeds into $C_{k}$, and we may set $\varphi(m+1)= k$. If $C_m$ is a one element labelled chain, $\underline 2.C_m= C_m+C_m$,  we send  the first copy of $C_m$ into some $C_k$ and the second copy into another $C_k'$ for $k'>k\geq \varphi(m)$.  
 
\noindent {\bf Case 2.} The equivalence  relation has at least two classes, and in fact a  dense set of classes. According to  $(iv)$ of Lemma \ref{lemma:basicindec}, $C$ is equimorphic to a sum $\Sigma_{q\in \Q} C_q$ such that for every $p<q$ and $r$ in $\Q$ there are  $s_0, \dots, s_{n_r-1}$ with $p<s_0<\cdots < s_{n_r-1}<q$ and $C_r\leq C_{s_0} + \cdots +C_{s_{n_r-1}}<C$. Let $p_0, \dots, p_n, \dots $ be an  enumeration of $\Q$. Suppose that we have defined an embedding $\varphi_m$ of $\Sigma^{*}_{p_n <p_m} \underline 2.C_{p_n}$ in a  sum $\Sigma_{q\in A} C_q$ where $A$ is a finite subset of $\Q$ and in such a way that the projections on $\Q$ of the images of the $\underline 2.C_{p_n}$'s do not intersect. We extend it to  $\underline 2.C_{p_m}$ as follows. Let $A^-$ be the initial,  resp. $A^+$ be the final, segment  of $\Q$ generated by the projections on $\Q$ of the images via $\varphi_m$ of the $\underline 2.C_{p_n}$'s for $p_n<p_m$, resp., for $p_m< p_n$. The complement is a dense interval of $Q$.  The labelled chain $\underline 2.C_{p_m}$  is a finite sum $\underline 2.C_{s_0} + \cdots + \underline 2. C_{s_{m-1}}$, the $C_{s_i}$ being indecomposable. If  $C_{s_i}$ is infinite , we may send  $\underline 2.C_{s_i}$ into some $C_j$ and if $C_i$ is a one element labelled chain, we may send $\underline 2.C_{s_i}$ into a sum $C_i +C_j$. 
 \end{proof}

Let $\Cog_{\leq \omega}$  be the collection of countable cographs, quasi-ordered by embeddability. According to Thomass\'e \cite{thomasse}, $\Cog_{\leq \omega}$ is b.q.o. Let  $Q:= \Cog_{\leq \omega}\times \{0,1\}$ be  the direct product of $\Cog_{\leq \omega}$ and the two element antichain $\{0,1\}$. This poset is b.q.o. as a union  of two b.q.o.'s.   And  thus,  from Laver's theorem, $Q^{\mathcal D_{\leq \omega}}$ is b.q.o.

 \begin{lemma} \label{lemma:equimorphy}Let  $C:= (I, \leq, \ell)$ be a countable reduced labelled chain such that $I$ has no first element  and the labels  belong to $\Cog_{\leq \omega}\times \{0,1\}$.  Then,  there is an initial segment $J$ of $I$ such that the restriction $C_J:= (J, \leq_{\restriction J},  \ell_{\restriction J})$ is left-indecomposable. If $J$ is such an initial segment then,  for every  map $f:\N \rightarrow \{0, 1\}$,   the sum $\Sigma  C_f$ of $C_f$ is embeddable into   the sum $\Sigma  C$ of $C$. 
  \end{lemma}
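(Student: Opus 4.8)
The first assertion---existence of an initial segment $J$ for which $C_J$ is left-indecomposable---is exactly Lemma \ref{lemma:basicindec}(2), so I concentrate on the embedding $\Sigma C_f\leq \Sigma C$. My plan is to reduce this to a statement internal to $J$. Since $\ell_f$ agrees with $\ell$ on $K:=I\setminus J$ and $J$ is an initial segment (so every position of $J$ lies below every position of $K$), it suffices to produce an embedding $\psi$ of $\Sigma \overline{C_J}$ into $\Sigma C_J$ having the following \emph{homogeneity property}: the image of each block $\overline G_x$ (for $x\in\overline J$) lies in positions of $C_J$ all carrying the label $v_f(x)$. One then extends $\psi$ by the identity on $K$. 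Because in a labelled sum the adjacency between two vertices is read off from the label of the \emph{lower} of the two positions, the homogeneity property forces every cross-adjacency---inside a block, between two blocks, and across the $J/K$ interface---to come out correctly; this is the mechanism that makes the reduction work.

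Next I record what the blocks of $\overline{C_J}$ look like. Since $f$ takes values in $\{0,1\}$, each inserted independent set $\overline G_{b_n}$ and each inserted clique $\overline G_{c_n}$ is \emph{finite}, of size $2f(n)+2\leq 4$, and each bumped block $\overline G_i$ with $i\in Ev(J)$ is a finite clique or independent set with one vertex more than $G_i$. The reducedness condition of Theorem \ref{decomposition-cographs} forces every clique block (a complete sum) to carry the label $1$ and every independent-set block of size $\geq 2$ (a direct sum) to carry the label $0$. Hence every block of $\overline{C_J}$ admits a label-homogeneous realization inside $C_J$: an unchanged block $G_i$ is realized at the single position $i$; a clique with label $1$ is realized as the complete sum of finitely many label-$1$ positions, one vertex taken from each; an independent set with label $0$ as the direct sum of finitely many label-$0$ positions. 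The dense labelling of $C_J$ guarantees infinitely many positions of each label in every nondegenerate interval, so the homogeneous positions needed for any single block are always available locally.

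The remaining difficulty is that $\overline{C_J}$ carries strictly more positions and vertices than $C_J$, so I must manufacture room. For this I will first prove the self-similarity $\underline k.C_J\leq C_J$ for every positive integer $k$. By Lemma \ref{lemma:basicindec}(3) one may write $C_J\equiv \Sigma^{*}_{n<\omega}C_n$ with each $C_n$ indecomposable and each set $\{m:C_n\leq C_m\}$ infinite. For an infinite summand $C_n$ Lemma \ref{lem:indec} gives $\underline k.C_n\leq C_n$, while a single-point summand $C_n$ is absorbed by distributing its $k$ copies among $k$ of the infinitely many slots $C_m\geq C_n$. Assembling these choices downward along the $\omega^{*}$ index yields $\underline k.C_J\leq C_J$. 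Thus, for a fixed $k$ large enough to dominate the finite inserted sizes (here $k=5$ suffices, since $2f(n)+2\leq 4$), the chain $\underline k.C_J$ supplies a $k$-fold reserve of every position while still embedding into $C_J$.

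Finally I will construct an embedding $\overline{C_J}\leq \underline k.C_J$ respecting homogeneity, and compose it with $\underline k.C_J\leq C_J$. The idea is to send each $j\in J$ to its bottom copy $(j,0)$, to realize a bumped block $\overline G_j$ inside $\{(j,0),(j,1)\}$ (a complete sum, resp. direct sum, of two copies of $G_j$, which contains the required odd-sized clique, resp. independent set), and to place the inserted gadget $(b_n,c_n)$ in the space just above $(a_n,0)$: the label-$0$ block $b_n$ onto label-$0$ copies available immediately above $a_n$---note that if $a_n$ has a successor in $J$ the dense labelling forces that successor to carry label $0$, and otherwise label-$0$ positions accumulate from above---and the label-$1$ block $c_n$ onto label-$1$ copies lying above them, with the displaced original blocks pushed into the spare copies by a recursion running downward along the coinitial sequence $(a_n)_{n\in\N}$. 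Composition then gives $\Sigma \overline{C_J}\leq \Sigma C_J$ and hence $\Sigma C_f\leq \Sigma C$. I expect the genuine obstacle to be precisely this last step: organising the coinitially many local insertions into one coherent, collision-free embedding, and in particular routing the label-$0$ block $b_n$---which in $\overline J$ sits immediately above the label-$1$ point $a_n$---onto genuinely unused label-$0$ positions while simultaneously relocating the surrounding blocks into the $k$-fold reserve. It is here that the better-quasi-order driven self-similarity of $C_J$ (both $\underline k.C_J\leq C_J$ and the embedding of $C_J$ into each of its nonempty initial segments) is indispensable, since without it there would be no room to accommodate the extra vertices coinitially.
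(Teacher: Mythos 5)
Your preparatory steps are sound and in fact coincide with the paper's own: the reduction to a label-homogeneous embedding of $\Sigma\, {C_{f}}_{\restriction \overline J}$ into $\Sigma\, C_{\restriction J}$ (extended by the identity on $K$), the blockwise realization of a bumped block $\overline G_j$ inside two copies of $G_j$ merged according to its reducedness-determined label, and the self-similarity $\underline{k}.C_J \leq C_J$ (which, note, follows at once from Lemma \ref{lem:indec}, since left-indecomposable implies indecomposable; your re-derivation via Lemma \ref{lemma:basicindec}(3) is redundant). But the step you explicitly defer --- ``I expect the genuine obstacle to be precisely this last step'' --- is the actual content of the lemma, and the mechanism you sketch for it fails. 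Under homogeneity, a block $G_s$ whose cograph is neither a clique nor an independent set can only be re-housed at a position whose cograph embeds $G_s$ and whose label equals $v(s)$; in $\underline{k}.C_J$ the only such positions are the $k$ copies of $s$ itself, so blocks are \emph{pinned}. Now let $s$ be the successor of $a_n$ (label $0$). Since $a_n<b_n<c_n<s$ in $\overline J$, and since the adjacency-reading rule forbids inverting the image order of two blocks carrying \emph{different} labels, the image of $b_n$ must lie strictly above that of $a_n$, the image of $c_n$ strictly above that of $b_n$ and strictly below that of $s$. Hence both gadgets are squeezed strictly between copies of $a_n$ (all label $1$, unusable for $b_n$) and copies of $s$ (all label $0$, unusable for $c_n$): the only positions in that range are further copies of $a_n$ and of $s$, so there is no admissible spot for $c_n$, no matter how large $k$ is. Pushing $G_s$ upward ``into spare copies'' cannot repair this, since the positions above are copies of other elements whose cographs need not embed $G_s$.

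The repair --- and this is what the paper actually does --- is to make the displacement global and \emph{downward} rather than local and upward. The paper introduces an auxiliary chain $D$ on $\underline{2}.J\cup\{b_n,c_n: n<\omega\}$ and proves $\Sigma\, {C_f}_{\restriction \overline J}\leq \Sigma D\leq \Sigma\, \underline 2. C_{\restriction J}\leq \Sigma\, C_{\restriction J}$. The middle inequality is the crux: writing $C_{\restriction J}$ as an $\omega^{*}$-sum $\Sigma^{*}_{n<\omega}C_n$ of indecomposable pieces, each of which re-embeds into infinitely many pieces further down (Lemma \ref{lemma:basicindec}(3) --- exactly the statement you invoke, but only for $\underline k.C_J\leq C_J$, where it is not needed), one runs a recursion down this sum: the piece $\underline 2.C_n$, carrying its finitely many gadget pairs, is mapped into fresh pieces lying arbitrarily far below the image already constructed, and the gadget pairs are placed homogeneously in the arbitrarily long gaps left between the images of consecutive sub-pieces, the dense labelling of $C_{\restriction J}$ supplying label-$0$ positions below label-$1$ positions in any sufficiently long stretch. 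In this scheme no block is ever pinned: an entire piece, cographs and all, moves into a copy of itself further down, which is precisely the room your local insertion scheme cannot create.
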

  
  \begin{proof}
  The existence of $J$ follows from the fact that the set of countable chains labelled by   $\Cog_{\leq \omega}\times \{0,1\}$ is b.q.o. and $(ii)$ of Lemma \ref{lemma:basicindec}. 
  Let  $f:\N \rightarrow \{0,1\}$ and  $C_{f}:= (\overline I, \leq, \ell_{f})$ be the labelled chain defined in Subsection \ref{subsection:sketch}. We prove that $\Sigma  C_f\leq \Sigma  C$. 
Let ${C_{f}}_{\restriction {\overline J}}$, resp.,  ${C_{f}}_{\restriction K}$ the restriction of $C_{f}$ to $K$. We have  $C_f= {C_{f}}_{\restriction \overline J} +   {C_{f}}_{\restriction K}$. As it is easy to see,  $\Sigma C_f$ extends both  $\Sigma {C_{f}}_{\restriction \overline J}$ and $\Sigma {{C_{f}}_{\restriction K}}$ in a simple way: If $a\in  \Sigma {{C_{f}}_{\restriction \overline J}}$  and $b\in \Sigma  {{C_{f}}_{\restriction K}}$, we link $a$ and $b$ by an edge if $v_f(i)=1$ where $i \in {\overline J}$ and  $a\in \overline G_i$. We denote by $\overline +$ this operation, hence   $\Sigma C_f= \Sigma {C_{f}}_{\restriction \overline J} \overline{+}  \Sigma  {C_{f}}_{\restriction K}$.   
  
To conclude, it suffices to prove that $\Sigma C_{f \restriction \overline {J}}$ embeds into $\Sigma C_{\restriction J}$ and $\Sigma {C_{f}}_{\restriction K}$ embeds into $\Sigma {C_{f}}_{\restriction K}$. Only the first statement needs a proof.
In order to prove it, we define an auxiliary labelled chain $D:= (L, \leq , d)$  as follows. The domain $L$ is $\underline {2}.J \cup X$ where $X:= \{b_n,c_n: n<\omega\}$,  $b_n$ covers $(1, a_n)$,  $c_n$ covers $b_n$ and $(a_n)_n$ is the sequence coinitial in $J$ defined in Subsection \ref{subsection:sketch}. The labelling $d$ is defined by $d(i):= \ell (r)$ if $i:= (j, r)\in \{0,1\}.J$, $d(i):= \ell_f(i)$ if $i\in \{a_n, b_n\}$ (So $D$ is not densely labelled, but this does not matter).  
We prove that the following inequalities hold. 
  
$$\Sigma {C_{f}}_{\restriction \overline {J}}\leq \Sigma D \leq \Sigma \underline 2. C_{\restriction J}\leq \Sigma C_{\restriction J}.$$

For the first inequality, note that by definition $\Sigma {C_{f}}_{\restriction \overline {J}}= \Sigma_{i\in (\overline J, v_f) }\overline G_i$ and $\Sigma D= \Sigma_{j\in L} D_j$ such that $D_j$ is the first component of $d(j)$. We have $\Sigma D= \Sigma_{i\in (\overline J, v_f)} H_i$ with $H_i= \overline G_i$ if $i\in \{a_n, b_n\}$ and $H_i= G_{0, i} \oplus G_{1, i}$,  resp. $G_{0, i} + G_{1, i}$ if $v_f(i)=0$, resp. $v_f(i)=1$ if $i\in I$. It follows that $\overline G_i \leq H_i$ for every $i\in \overline J$. Thus $\Sigma {C_{f}}_{\restriction \overline {J}}\leq \Sigma D$. 

For the  second inequality,  we observe that ${C}_{\restriction J}$ being left-indecomposable, we  may write it as an $\omega^*$ sum $\Sigma^{*}_{n<\omega} C_n$ where each $C_n$ is indecomposable and the set of $m$ such that $C_n$ embeds into $C_m$ is infinite. For each $n$, let $J_n$ be the domain of $C_n$ and let $L_n:= \underline {2}.J_n \cup \{b_m, c_m: (1, a_m)\in   \underline {2}.J_n\}$. Set $D_n:= D_{\restriction L_n}$. Then  $D=\Sigma^{*}_{n<\omega} D_n$. 
Define an  embedding from $\Sigma D$ in $\Sigma \underline 2. C_{\restriction J}$ by induction. Let $n<\omega$. Suppose that $\Sigma(D_{n-1} +\cdots + D_{0})$ has been embedded  in $\Sigma(\underline 2. C_{\varphi (n  -1)} +\cdots + \underline 2. C_{0})$.  The labelled chain  $D_{n}$ consists of $\underline 2. C_{n}$ plus finitely many elements, each labelled by a finite graph. Thus $D_n$ can be written  
$D_{n,0} + \alpha_{(0,0)}+ \alpha_{(0,1)}  +D_{n,1} + \alpha_{(1,0)} + \alpha_{(1,1)}+ \cdots + D_{k-2}+\alpha_{(k-2,0)} + \alpha_{(k-2,1)}+D_{k-1}$ with $\underline{2}. C_{n}= D_{n,0}+  \cdots + D_{n, k-1}$ and the $\alpha_i$ are singletons belonging to $X$ labelled by a $2$ or $4$-element cograph. Since the set of $m$ such that $C_n$ embeds in $C_m$ is infinite, of the $C_m$ we may find $m\geq \varphi(n-1)$ and $k$ such that  $\Sigma D_n$ embeds in  $\Sigma (D_{m+k} +\cdots +  D_{m})$. Then set $\varphi(n) = m+k$.

The last inequality is because $\underline 2. C_{\restriction J}\leq C_{\restriction J}$, a fact which follows  from Lemma \ref{lem:indec} since $C_{\restriction J}$ is indecomposable.

  With  that the proof of the lemma is complete.

  \end{proof}
  
%
%
%
%
%

\section{Proof of  Theorem  \ref{thm:connected-siblings}.}
We use induction on the collection $\Cog_{\leq \omega} $ of countable cographs with a quasi-order slightly  different from embeddability.  If countable  $G$ and $G'$  are two  connected cographs, we  set $G\preceq G'$ if $G\leq G'$ or $G\leq G'^c$.  This is a quasi ordering; since it extends the embeddability, which is w.q.o., it is w.q.o. hence well-founded. So, in order to prove that  a connected countable cograph $G$ has one or infinitely many connected siblings, we may suppose that this property holds for  all  connected countable cographs  $G'$  such that $G' \prec G$, that is either  $G'$ strictly embeds into $G$ or into $G^c$.   There are two cases to consider:

\noindent {\bf Case 1.} The complement $G^c$ of $G$ is not connected.
We  apply Proposition \ref{summary}. 
We decompose $G^c$ into connected components, say $G^c := \bigoplus_i H_i$,  where each $H_i$ is connected.    

We have  $H_i\leq G^c$, hence $H_i\preceq G$. 

\noindent {\bf Subcase 1} $G\preceq H_i$ for some $i$.  This means $G\leq H_i$ or $G^c\leq H_i$. This implies that $G^c$ is equimorphic to some  connected graph $K$ ($K= H_i$ in the second case, $K=G$ in the first  case). It follows from Lemma \ref{lem:disconnected1} that $G^c$ has infinitely many disconnected siblings, hence $G$ has infinitely many connected siblings. 
\noindent {\bf Subcase 2} $H_i \prec  G$ for  all $i$.

According to the induction hypothesis, each $H_i$ has either one or infinitely many connected siblings.  

\noindent{\bf Subcase 2.1} Some connected component of $G^{c}$ has infinitely many connected siblings. According to Lemma \ref{lem:more siblings}, $G^c$ has infinitely many disconnected siblings. It folows that $G$ has infinitely many connected siblings. 

\noindent {\bf Subcase 2.2 } Every connected component $H_i$ of $G^c$ has only one connected sibling.

\noindent{\bf Subcase 2.2.1} The number of non-trivial 	connected components of $G^c$ is infinite. Since the collection of countable cographs is w.q.o. under embeddability, there is an increasing sequence  
among these connected components. According to Lemma \ref{lem:increasing chain} $G^c$ has infinitely many disconnected siblings, hence $G$ has infinitely many connected siblings.

\noindent {\bf Subcase 2.2.2}. $G^c$ has only finitely many non-trivial connected components. We apply Lemma \ref{lem:onesiblingdirectsum}. Then $G^c$ has either one sibling or infinitely many disconnected siblings. Hence $G$ has either one sibling or infinitely many connected siblings.

\noindent {\bf Case 2. } 
The complement $G^c$ of $G$ is connected. In this case, the decomposition tree $T(G)$ has no least element. 

If $T(G)$ has no least element, then $G$ is a sum  $\sum_{i \in (C, \ell )} G_i$ of non-empty graphs, where $C$ is a chain with no first element, $v $ is a dense labelling of $C$ by $0$ and $1$, $G$ extends each $G_i$, and for $x\in G_i$, $y\in G_j$ with $i<j$, $\{x,y\}$ forms an edge  iff $v(i)=1$. According to Theorem \ref{label:chain/antichain}, $G$ has $2^{\aleph_0}$ connected siblings. 
\hfill $\Box$
\section{Proof of Theorem \ref{thm:cograph- one-sibling}} 

We prove $(i)\Rightarrow (ii)\Rightarrow (iii)\Rightarrow (i)$.

The implication $(i) \Rightarrow (ii)$ is trivial. Implication $(iii)\Rightarrow (i)$ is an immediate consequence of the following result. 
\begin{theorem}\label{thm:lex-one-sibling} If a graph $G$ is a finite lexicographic sum of cliques or independent sets then it has just one sibling: itself. 
\end{theorem}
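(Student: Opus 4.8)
The plan is to prove, by simultaneous induction on the height $h(G)$ of the reduced modular decomposition tree of $G$, the two statements
\begin{enumerate}[(A)]
\item $\sib(G)=1$;
\item if $G$ is connected, then $G\oplus M\not\le G$ for every nonempty graph $M$.
\end{enumerate}
Here $h(G)=0$ exactly when $G$ is a single (possibly infinite) clique or independent set, and each child of the root — the connected components when the root is a $\oplus$, the co-components $D_k$ when $G=\bigplus_k D_k$ — is again a finite lexicographic sum of cliques or independent sets of strictly smaller height; moreover $G$ has only finitely many \emph{nontrivial} connected components (the only way to produce infinitely many components is the shattering of one infinite independent-set block into isolated vertices, all trivial). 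Since complementation is an isomorphism-preserving bijection on siblings and $G$ is a cograph (so $G$ or $G^c$ is disconnected), I have $\sib(G)=\sib(G^c)$ and may, for (A), treat the disconnected case and then deduce the connected case from it at the same height. The base case is immediate: an induced subgraph of $K_\kappa$ (resp. $\overline{K_\kappa}$) is a clique (resp. independent set), so a sibling has the same type and, by mutual embeddability, the same size; and $K_\kappa\oplus M$ is disconnected with an isolated part, which cannot embed into the complete graph $K_\kappa$.

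For the inductive step of (A), assume $G$ is disconnected with $h(G)\ge 1$ and write $G=\bigoplus_j C_j$ with the $C_j$ its connected components, finitely many of them nontrivial. Each $C_j$ has height $<h(G)$, so by (A) it satisfies $\sib(C_j)=1$ and hence has exactly one connected sibling (a trivial component, being a single vertex, likewise has one). Thus Lemma \ref{lem:onesiblingdirectsum} applies and yields the dichotomy that $G$ has either exactly one sibling or infinitely many disconnected siblings. Inspecting that proof, the second alternative is forced only when some connected component $H$ satisfies $H\oplus L\le H$ for a nonempty graph $L$ (a nontrivial connected $L$ in its Case 1, an independent set $\overline{K_\kappa}$ in its Case 2). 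But $H$ is a connected component of height $<h(G)$, so by the inductive form of (B) no such absorption exists; the second alternative is excluded and $\sib(G)=1$. The connected case of (A) at height $h(G)$ then follows from the disconnected case via $\sib(G)=\sib(G^c)$.

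It remains to establish (B), which is the heart of the matter. Let $G$ be connected with $h(G)\ge 1$; being a cograph on more than one vertex, $G$ is co-disconnected, so $G=\bigplus_k D_k$ with at least two co-components. Suppose toward a contradiction that $G\oplus M\le G$ for some nonempty $M$, realized by disjoint images $\widetilde P\cong G$ and $\widetilde Q\cong M$ with no edge between them. The crucial observation is that any two vertices lying in distinct co-components of $G$ are adjacent; hence, fixing $w_0\in\widetilde Q$, every vertex of $\widetilde P$ is non-adjacent to $w_0$ and therefore lies in the single co-component $D_1$ containing $w_0$, and symmetrically $\widetilde Q\subseteq D_1$. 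Thus $G\cong G[\widetilde P]\le D_1$. As $D_1\le G$ trivially, $G$ and $D_1$ are equimorphic; since $h(D_1)<h(G)$, statement (A) gives $\sib(D_1)=1$, forcing $G\cong D_1$. But $D_1$ is co-connected (a single co-component) whereas $G$ has at least two, a contradiction. Hence $G\oplus M\not\le G$, completing the induction.

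I expect the main obstacle to be exactly step (B): one must control how a self-embedding of $G$ (with room to spare) interacts with the co-component structure, and then upgrade the resulting equimorphism $G\equiv D_1$ to an isomorphism through the inductive hypothesis on $\sib$. It is precisely this rigidity — the impossibility of absorbing extra vertices into a finite lexicographic sum of cliques or independent sets — that blocks the constructions (via Lemma \ref{lem:onesiblingdirectsum} and Proposition \ref{summary}) which would otherwise manufacture infinitely many siblings.
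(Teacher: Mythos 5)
Your argument has a genuine gap: it silently assumes that $G$ is a cograph, which does not follow from the hypothesis of Theorem \ref{thm:lex-one-sibling}. A finite lexicographic sum of cliques or independent sets is indexed by an \emph{arbitrary} finite graph $H$, and the theorem is stated (and proved in the paper) in that generality; nothing forces $H$, hence $G$, to be a cograph. Your proof needs cograph-ness at two decisive points: in the opening reduction ``$G$ is a cograph (so $G$ or $G^c$ is disconnected)'', which lets you deduce the connected case of (A) from the disconnected one, and in step (B), where ``being a cograph on more than one vertex, $G$ is co-disconnected'' is what produces the co-components $D_k$. Take $H=P_4$ or $H=C_5$ and let one block $L_j$ be an infinite clique, the others singletons: then $G$ satisfies the hypothesis, is infinite, and is both connected and co-connected, because the root of its modular decomposition tree is a \emph{prime} node rather than a $\oplus$ or a join node. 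For such $G$ your induction never starts --- there is no decomposition into components or co-components to recurse on, and in (B) there is no $D_1$ to trap the image $\widetilde P$ in. So, as written, your proof establishes the theorem only for cograph $G$, which is strictly weaker than the stated result.

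Within the class of cographs your argument does appear sound (the simultaneous induction on (A) and (B), the use of Lemma \ref{lem:onesiblingdirectsum} with the absorption alternatives excluded by (B), and the co-component trapping argument are all correct there), and that special case is in fact all the paper needs for the implication $(iii)\Rightarrow(i)$ of Theorem \ref{thm:cograph- one-sibling}. But note how the paper's proof is built precisely to avoid your restriction: it never mentions connectivity. It observes that the blocks of the lexicographic sum form a \emph{finite monomorphic decomposition} of $G$, invokes the canonical decomposition $\Mon(G)$ (Theorem \ref{thm:canonical}), and uses a counting argument on the finitely many classes (Proposition \ref{prop:mono-dec}) to show that any $A$ with $G\leq G_{\restriction A}$ must satisfy $\vert A\cap C\vert=\vert C\vert$ for every class $C$; arbitrary bijections $C\to A\cap C$ then glue to an isomorphism $G\cong G_{\restriction A}$, since each class is a module inducing a clique or an independent set. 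That argument is completely indifferent to whether the quotient $H$ is prime, series, or parallel. To repair your proof you would have to handle a prime root directly, i.e., show that a self-embedding of $G$ with room to spare respects the blocks and preserves their cardinalities --- and that is essentially the monomorphic-decomposition argument you would be re-deriving.
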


This is a consequence of properties of monomorphic decompositions of relational structures, a notion introduced in \cite{pouzet-thiery} and developped in \cite{oudrar-pouzet, sikaddour-pouzet, oudrar, Lafl-Pouz-Saue-Wood}.

Let $R:= (V, (\rho_{i})_{i\in I})$ be  a relational structure. A \emph{monomorphic decomposition} of $R$  is any partition $(V_j)_{j \in J}$ of $V$ such that for every pair of finite subsets $F,F'$ of $V$, the restrictions $R_{\restriction F}$ and $R_{\restriction F'}$ are isomorphic whenever $\vert F\cap V_j\vert=\vert F'\cap V_j\vert$ for every $j\in J$. 
Among the monomorphic decompositions of $R$ there is a largest one: every other is included in it (see \cite{pouzet-thiery}, Proposition 2.12).  We call it the \emph{canonical decomposition} of $R$ and denote it by $\Mon(R)$. Its existence  is a consequence of the following notion: say that two elements $x,y$ of $V$ are \emph{equivalent} and set $x\simeq_R y$ if for every finite subset $F$ of $V\setminus \{x,y\}$ the restrictions $R_{\restriction F\cup \{x\}}$ and $R_{\restriction F\cup \{y\}}$ are isomorphic. 
%

Oudrar and Pouzet showed (see Lemma 7.48 and Lemma 7.49 in Section  7.2.5 of \cite{oudrar}):

\begin{theorem}\label{thm:canonical} The partition of the domain $V$ of a relational structure $R$ into equivalence classes forms a monomorphic decomposition of $R$ and every other monomorphic decomposition of $R$ is a refinement of it. 
\end{theorem}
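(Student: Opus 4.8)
The plan is to prove the two assertions separately, after first recording that $\simeq_R$ is genuinely an equivalence relation, as the statement presupposes. Reflexivity and symmetry are immediate; only transitivity needs an argument. Suppose $x\simeq_R y$ and $y\simeq_R z$ (with $x,y,z$ distinct, the degenerate cases being trivial), and let $F$ be a finite subset of $V\setminus\{x,z\}$. If $y\notin F$, then $F$ avoids both $\{x,y\}$ and $\{y,z\}$, so chaining the two hypotheses gives $R_{\restriction F\cup\{x\}}\cong R_{\restriction F\cup\{y\}}\cong R_{\restriction F\cup\{z\}}$. If $y\in F$, write $F=F'\cup\{y\}$ with $y\notin F'$; applying $y\simeq_R z$ to $F'\cup\{x\}$ and $x\simeq_R y$ to $F'\cup\{z\}$ yields $R_{\restriction F'\cup\{x,y\}}\cong R_{\restriction F'\cup\{x,z\}}\cong R_{\restriction F'\cup\{y,z\}}$, which is exactly $R_{\restriction F\cup\{x\}}\cong R_{\restriction F\cup\{z\}}$. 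Hence $x\simeq_R z$.

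The engine for the first assertion is the observation, immediate from the definition of $\simeq_R$, that whenever $x\simeq_R y$ and $G$ is a finite set with $x,y\notin G$ one has $R_{\restriction G\cup\{x\}}\cong R_{\restriction G\cup\{y\}}$: a single element can be replaced by an equivalent one with the rest of the configuration held fixed. To show the partition $(V_j)$ into $\simeq_R$-classes is a monomorphic decomposition, I would take finite $F,F'$ with $|F\cap V_j|=|F'\cap V_j|$ for all $j$ and transform $F$ into $F'$ by a chain of such single swaps. The care needed is to isolate the common part: set $A:=F\cap F'$, $B:=F\setminus F'$, $B':=F'\setminus F$, so that $A,B,B'$ are pairwise disjoint and $|B\cap V_j|=|B'\cap V_j|$ for every $j$. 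Pairing, within each class, the elements of $B$ with those of $B'$ gives enumerations $B=\{x_1,\dots,x_n\}$, $B'=\{y_1,\dots,y_n\}$ with $x_i\simeq_R y_i$, and since $B\cap B'=\emptyset$ no $x_i$ equals any $y_j$. Setting $F_k:=A\cup\{y_1,\dots,y_k,x_{k+1},\dots,x_n\}$, the step from $F_{k-1}$ to $F_k$ replaces $x_k$ by $y_k$ in $G_k:=A\cup\{y_1,\dots,y_{k-1},x_{k+1},\dots,x_n\}$, and the disjointness guarantees $x_k,y_k\notin G_k$; the single-swap observation then gives $R_{\restriction F_{k-1}}\cong R_{\restriction F_k}$. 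Chaining from $F_0=F$ to $F_n=F'$ yields $R_{\restriction F}\cong R_{\restriction F'}$.

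For the second assertion I must show that any monomorphic decomposition $(W_k)_{k\in K}$ refines the $\simeq_R$-partition, i.e. that $x,y$ lying in a common block $W_k$ forces $x\simeq_R y$. Given a finite $F\subseteq V\setminus\{x,y\}$, compare $F_1:=F\cup\{x\}$ and $F_2:=F\cup\{y\}$: since $x,y$ lie in $W_k$ and in no other block, the two sets differ only inside $W_k$, where each gains exactly one new element, so $|F_1\cap W_j|=|F_2\cap W_j|$ for every $j$. The defining property of a monomorphic decomposition then gives $R_{\restriction F_1}\cong R_{\restriction F_2}$, which is the required $R_{\restriction F\cup\{x\}}\cong R_{\restriction F\cup\{y\}}$; hence $x\simeq_R y$ and $W_k$ sits inside a single $\simeq_R$-class.

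The only genuinely delicate point is the overlap bookkeeping in the first assertion: a naive element-by-element substitution breaks down when $F$ and $F'$ share elements, or when an element being removed coincides with one still to be inserted, and the splitting $F=A\cup B$, $F'=A\cup B'$ with $B,B'$ disjoint is precisely what secures the hypothesis $x_k,y_k\notin G_k$ at every intermediate swap. The remaining ingredients, namely transitivity and the refinement direction, are routine counting arguments.
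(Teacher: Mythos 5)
Your proof is correct. Note that the paper does not prove this theorem at all: it quotes it from Oudrar's thesis (Lemmas 7.48 and 7.49 of \cite{oudrar}), so there is no in-paper argument to compare against; your proof is essentially the standard one behind that citation. The three ingredients are all sound and complete: the two-case verification of transitivity of $\simeq_R$, the one-element-swap chain for showing the $\simeq_R$-classes form a monomorphic decomposition (where isolating the common part $A=F\cap F'$ and pairing only $B=F\setminus F'$ with $B'=F'\setminus F$ is precisely what keeps $x_k,y_k$ outside every intermediate set $G_k$, so each swap is a legitimate application of the definition of $\simeq_R$), and the counting argument showing any block of a monomorphic decomposition lies inside a single $\simeq_R$-class.
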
 
We will need the following fact.

\begin{proposition}\label{prop:mono-dec}Let $R:= (V, (\rho_{i})_{i\in I})$ be a relational structure and $A$ a subset of $V$. Then 
\begin{enumerate}
\item Every monomorphic decomposition of $R$ induces a monomorphic decomposition of $R_{\restriction A}$; 

\item If $R$ has a monomorphic decomposition into finitely many classes and $R$ embeds into $R_{\restriction A}$ then for each class $C$ of $\Mon (R)$, $R_{\restriction A\cap C}$ is a class of the canonical decomposition of $\Mon (R_{\restriction A})$ and $\vert A\cap C\vert = \vert C\vert $. 
\end{enumerate}

\end{proposition}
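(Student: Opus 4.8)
The plan is to prove (1) by direct verification and then to bootstrap it, together with Theorem~\ref{thm:canonical}, into (2) via a counting argument on the number of classes.

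For (1), let $(V_j)_{j\in J}$ be a monomorphic decomposition of $R$; I claim $(V_j\cap A)_{j\in J}$ is one of $R_{\restriction A}$. Take finite $F,F'\subseteq A$ with $\vert F\cap (V_j\cap A)\vert=\vert F'\cap (V_j\cap A)\vert$ for all $j$. Since $F,F'\subseteq A$ we have $F\cap (V_j\cap A)=F\cap V_j$ and likewise for $F'$, so $\vert F\cap V_j\vert=\vert F'\cap V_j\vert$ for all $j$; as $(V_j)$ is a monomorphic decomposition of $R$, the restrictions $R_{\restriction F}$ and $R_{\restriction F'}$ are isomorphic, and these coincide with $(R_{\restriction A})_{\restriction F}$ and $(R_{\restriction A})_{\restriction F'}$. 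This is the whole of (1); the point is only that intersecting with $A$ is harmless once $F,F'$ already lie in $A$.

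For (2), write $\Mon(R)=\{C_1,\dots,C_k\}$ and fix an embedding $\varphi$ of $R$ into $R_{\restriction A}$, so that $\varphi$ is an isomorphism of $R$ onto $R_{\restriction \varphi(V)}$ with $\varphi(V)\subseteq A\subseteq V$. I would assemble three facts. First, by (1) applied to $R$ and $A$, the nonempty sets among $A\cap C_1,\dots,A\cap C_k$ form a monomorphic decomposition of $R_{\restriction A}$, hence, since $\Mon(R_{\restriction A})$ is the coarsest such decomposition, a refinement of $\Mon(R_{\restriction A})$; in particular $k':=\vert\Mon(R_{\restriction A})\vert\le k$. Second, since $\varphi$ is an isomorphism, Theorem~\ref{thm:canonical} gives $\Mon(R_{\restriction \varphi(V)})=\{\varphi(C_1),\dots,\varphi(C_k)\}$. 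Third, by (1) applied to $R_{\restriction A}$ and its subset $\varphi(V)$, the sets $\varphi(V)\cap D$ for $D\in\Mon(R_{\restriction A})$ form a monomorphic decomposition of $R_{\restriction \varphi(V)}$, hence refine $\{\varphi(C_j)\}$; since a refinement of a $k$-block partition has at least $k$ nonempty blocks while there are at most $k'$ sets $\varphi(V)\cap D$, this gives $k\le k'$.

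The crux is then purely numerical: $k=k'$, so each of the two refinements above is between partitions with the same finite number of nonempty blocks and is therefore an equality. The first yields $\Mon(R_{\restriction A})=\{A\cap C_i\}_{i\le k}$ with every $A\cap C_i$ nonempty, which is exactly the first assertion of (2). The second yields $\{\varphi(V)\cap C_i\}_{i\le k}=\{\varphi(C_j)\}_{j\le k}$ (using $\varphi(V)\subseteq A$), i.e. a permutation $\tau$ of $\{1,\dots,k\}$ with $\varphi(C_j)=\varphi(V)\cap C_{\tau(j)}\subseteq A\cap C_{\tau(j)}\subseteq C_{\tau(j)}$. Hence $\vert C_j\vert=\vert\varphi(C_j)\vert\le\vert A\cap C_{\tau(j)}\vert\le\vert C_{\tau(j)}\vert$ for every $j$. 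Running these inequalities around each cycle of $\tau$ and invoking antisymmetry of $\le$ on cardinals (Cantor--Schr\"oder--Bernstein) forces $\vert C_j\vert=\vert C_{\tau(j)}\vert$, and then the squeeze gives $\vert A\cap C_{\tau(j)}\vert=\vert C_{\tau(j)}\vert$ for all $j$, that is $\vert A\cap C\vert=\vert C\vert$ for every class $C$ of $\Mon(R)$. I expect the main obstacle to be arranging the two block-counts so that both refinements collapse to equalities; once $k=k'$ is in hand (and this is precisely where the hypothesis that $R$ embeds into $R_{\restriction A}$ is used), the identification of the classes and the cardinality bookkeeping follow mechanically.
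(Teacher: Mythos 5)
Your proof is correct and follows essentially the same route as the paper's: both apply item (1) to trace partitions, use Theorem \ref{thm:canonical} to see that these refine the canonical decompositions, exploit finiteness of the number of classes to force the refinements to collapse to equalities, and finish by comparing the frequency sequences via pointwise inequalities. The differences are only organizational: the paper works first with the range $A'$ of the embedding (where $R_{\restriction A'}\cong R$) and then transfers to $A$ ``by the same token,'' whereas you handle $A$ and $\varphi(V)$ simultaneously to get the class-count equality $k=k'$, and you make explicit the cycle/Cantor--Schr\"oder--Bernstein step that the paper leaves implicit.
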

\begin{proof} Item $(1)$ is obvious.

\noindent $(2)$  Let $f$ be an embedding of $R$ into $R_{\restriction A}$,  $A'$ be the range of $f$ and $R':= R_{\restriction A'}$.  According to $(1)$, $(A'\cap C)_{C\in \Mon (R)}$ is  a monomorphic decomposition of $R'$, hence it is finer than $\Mon(R')$.  Thus,  it has as many classes as $\Mon(R')$. Since $R$ and $R'$ are isomorphic, their canonical decompositions have the same number of classes, hence  $(A'\cap C)_{C\in \Mon (R)}$ has the same number of classes  as $\Mon (R)$. These number being finite, the partition $(A'\cap C)_{C\in \Mon (R)}$ coincides with $\Mon (R')$,  hence the frequency sequences $(\vert A'\cap C\vert)_{C\in \Mon (R)}$ and $(\vert C\vert)_{C\in \Mon(R)}$ are equal up to a permutation. Since $\vert A'\cap C\vert \leq \vert C\vert$ for each $C\in \Mon(R)$, these sequences must be equal. By the same token,  we obtain that the  frequency sequences $(\vert A\cap C\vert)_{C\in \Mon (R)}$ and $(\vert C\vert)_{C\in \Mon(R)}$ coincide, proving that $(2)$ holds.

 \end{proof}

The case of symmetric graphs is particularly simple:

\begin{lemma} Let $G:= (V, \mathcal E)$ be a symmetric graph. A partition  $(V_j)_{j \in J}$ of $V$ is a monomorphic decomposition of $G$ if and only if each $G_{\restriction V_j}$ is a clique or an independent set and  $G$ is the  lexicographic sum of the $G_{\restriction V_j}$'s indexed by a graph $H$ on $J$. In particular, $G$  has a finite monomorphic decomposition if and only if it is  a lexicographic sum of cliques or independent sets indexed by a finite graph. 
 \end{lemma}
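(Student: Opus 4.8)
The plan is to verify both implications of the biconditional directly from the definition of a monomorphic decomposition, using only finite test sets $F,F'$ of size at most two. The fact that the index set $J$ may be infinite causes no trouble, since the definition quantifies only over finite subsets of $V$, and for such subsets all the relevant traces are finite.

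First I would prove the ``only if'' direction. Assume $(V_j)_{j\in J}$ is a monomorphic decomposition of $G$. To see that each $G_{\restriction V_j}$ is a clique or an independent set, I would take any two $2$-element subsets $\{a,b\}$ and $\{a',b'\}$ of a single class $V_j$: these have identical intersection sizes with every class (namely $2$ with $V_j$ and $0$ with every other), so the defining property forces $G_{\restriction \{a,b\}}\cong G_{\restriction \{a',b'\}}$. Since a two-vertex symmetric graph is determined by whether its pair is an edge, all pairs inside $V_j$ are of the same type, i.e. $V_j$ induces a clique or an independent set. Next, to recover the lexicographic structure I would show that for distinct classes $V_j,V_{j'}$ the adjacency between them is constant: given $x,x'\in V_j$ and $y\in V_{j'}$, the sets $F=\{x,y\}$ and $F'=\{x',y\}$ again have matching intersection sizes with every class, so $G_{\restriction F}\cong G_{\restriction F'}$ and hence $\{x,y\}$ is an edge iff $\{x',y\}$ is. Running the same argument on the other endpoint shows that the edge relation between $V_j$ and $V_{j'}$ depends only on the pair $\{j,j'\}$. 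Defining a graph $H$ on the vertex set $J$ by joining $j$ and $j'$ exactly when some (equivalently every) cross pair is an edge then exhibits $G$ as the lexicographic sum of the $G_{\restriction V_j}$ indexed by $H$.

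For the ``if'' direction I would assume each $G_{\restriction V_j}$ is a clique or an independent set and that $G$ is the lexicographic sum over some graph $H$ on $J$, and build the required isomorphism explicitly. Given finite $F,F'$ with $|F\cap V_j|=|F'\cap V_j|$ for all $j$, only finitely many classes meet $F\cup F'$, and on each such class the two traces have equal finite cardinality, so I may choose a bijection $\phi_j\colon F\cap V_j\to F'\cap V_j$ and let $\phi$ be their union. Then $\phi$ is a graph isomorphism: a pair lying inside one class keeps its adjacency because the class is homogeneous, while a pair straddling two classes keeps its adjacency because cross-adjacency is governed solely by $H$. Hence $(V_j)_{j\in J}$ is a monomorphic decomposition. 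The ``in particular'' clause is then immediate, since a finite monomorphic decomposition is exactly one with $J$ finite, which is the same as $H$ being finite.

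I do not foresee a genuine obstacle: the only points demanding care are bookkeeping ones, namely checking that the chosen test sets $F,F'$ really do have matching intersection profiles with every class, and verifying that the piecewise bijection $\phi$ respects adjacency in both the intra-class and the inter-class cases. The essential content is simply that a monomorphic decomposition of an undirected graph is forced to be \emph{homogeneous} on the parts and \emph{modular} between them, which is precisely the defining shape of a lexicographic sum.
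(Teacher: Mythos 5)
Your proof is correct and takes essentially the same route as the paper's: the forward direction deduces that each class is a clique or independent set and a module of $G$ (the paper states this tersely, you spell it out via two-element test sets), and the converse glues arbitrary class-by-class bijections into an isomorphism exactly as the paper does.
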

\begin{proof}
Since $G$ is symmetric,  if $(V_j)_{j\in J}$  is a monomorphic decomposition of $G$ then each  $G_{\restriction V_j}$ is either a clique or an independent set and a module of $G$.   Hence, $G$ is the  lexicographic sum of the $G_{\restriction V_j}$ indexed by a graph $H$ on $J$. Conversely, if $G$ is a lexicographic sum $\sum_{j\in H}L_{j}$ of cliques or independents sets $L_{j}$ indexed by a graph $H$ then the family of $L_{j}$ forms a monomorphic decomposition of $G$. Indeed, let $F$, $F'$ be two finite subsets of $V$ such that $\vert F\cap L_j\vert = \vert F'\cap L_j\vert $ for every $j\in H$. For $j\in J$, let  $f_j$ be  any bijective map from $F\cap L_j$ onto $F'\cap L_j$, then $f:= \bigcup_{j\in J}f_j$ is an isomorphism of $G_{\restriction F}$ onto  $G_{\restriction F'}$.
\end{proof}\\

\noindent{\bf Proof of Theorem \ref{thm:lex-one-sibling}}
Let $A\subseteq V$ be such that $G$ embeds into $G\restriction A$. Our aim is to show that $G_{\restriction A}$ is isomorphic to $G$. According to $(2)$ of Proposition \ref{prop:mono-dec}, $\vert C\cap A\vert = \vert C\vert$ for each equivalence class $C$ of $\simeq_G$. For each equivalence class $C$, let $f_C$ be any bijective map from $C$ onto  $C\cap A$. Then $f:= \bigcup_{C} f_C$ is an isomorphism from $G$ onto $G_{\restriction A}$. \hfill $\Box$\\

\noindent{\bf Proof of implication $(ii) \Rightarrow (iii)$ of Theorem \ref{thm:cograph- one-sibling}.}  We argue by induction. Since the quasi order $\preceq$ definined in the proof of Theorem \ref{thm:connected-siblings} is a well quasi order, in order to prove that $\sib_{c} (G)=1$ implies that $G$ is a lexicographical sum of cliques or independent sets, we may suppose that for every graph $H$ such that $H \prec G$  and $\sib_c (H)=1$  is such a lexicographic sum.  According to Theorem \ref{label:chain/antichain}, $G$ or $G^c$ is disconnected.  Without loss of generality, we may suppose that $G$ is disconnected. If $H$ is a connected component, $H\prec G^c$. Otherwise, as in the proof of Theorem \ref{thm:connected-siblings}, $G$ has infinitely many disconnected sibling contradicting $\sib_c(G)\not =1$. We apply Proposition \ref{summary}. For each  connected component $sib_{con}(H)=1$    and,  due to the w.q.o.  of embeddability, there are only finitely many  connected  components. Let $\{H_i: i<m\}$ be the set of non-trivial connected components of $G$.  Due to the induction hypothesis, each one is of the form $\sum_{j\in  K_i}L_{ij}$, where each $L_{ij}$ is a finite cograph. Let $K:= \bigoplus_{i<m} K_{i} \oplus \{a\}$ and $L_a$ be the independent set made of the trivial components. Then, $G$ is the lexicographical sum of the $L_{ij}$ and $L_a$ over $K$. 
\hfill $\Box$

\section{Extensions} \label{section:extension}

Our result  is crude in several aspects.

 First, we think  that one can prove without (CH) that a countable cograph has one, $\aleph_0$ or $2^{\aleph_0}$ siblings. 
 
 We think that the following holds:  \label{thm:siblings-cographs2}

Let  $G$ be a countable cograph and ${\bf T}(G):= (R(G), v)$ its decomposition tree.
\begin{enumerate}

\item $\sib(G)= 2^{\aleph_{0}}$ if ${\bf T}(G)$ contains an infinite set  $A$ such that for every integer $n$ the set of  $a\in A$ such that the subtree $T_{\restriction \uparrow a}$ has cardinality at most $n$ is finite.
\item $\sib (G)= \aleph_{0}$  if ${\bf T}(G)$ has only finitely many levels and for each $a\in T$ with infinitely many successors, there is an integer $n$ which bounds the cardinality of almost all $T_{\restriction \uparrow b}$ (where $b$ is a successor of $a$), and there is some $a\in T$ with infinitely many successors $b$  such that  all  $T_{\restriction \uparrow b}$ have at least two elements. 
\item $\sib(G)= 1$ if ${\bf T}(G)$ has only finitely many levels and if some element $a$ has infinitely many successors then almost all are maximal in ${\bf T}(G)$. 
\end{enumerate}

What is needed?

$\bullet$ \label{sib(1+G)} A countable connected cograph $G$ embedding $G\oplus 1$ has $2^{\aleph_0}$ siblings. 

This will be true if we can prove that

 $\bullet$ If ${\bf T}(G)$ is well founded with an infinite chain then $\sib(G)=2^{\aleph_0}$.

\begin{problem}
If a  a countable  connected graph $G$ embeds $G\oplus 1$, is $\sib(G)= \aleph_0$ or $\sib(G)=2^{\aleph_0}$? 
\end{problem}
%
%
%
%
 We guess that the alternative "one" or "infinite" may hold for arbitrary cographs, possibly uncountable. But, we may note that then the well quasi ordering arguments cannot be used. The collection of uncountable cographs is not w.q.o. under embeddability. Simple examples can be made with rigid chains and the comb construction. Also, there are uncountable cographs  with no proper embedding (in particular,   they have just one sibling) while countable cographs with one sibling have plenty of  proper embeddings. To illustrate, say that  a \emph{comb} is  the  sum $G$  of a dense labelled chain $C:= (I,\leq,  \ell)$ such that for each $i$,  the  label  $\ell(i):= (G_i, v(i))$  is made of a one vertex graph if $i$ is not the largest element, otherwise $G_i$ has two vertices,    and $v(i)\in \{0,1\}$. The decomposition tree of $G$ is the pair $(T, w)$ where $T$ is the tree on $I\cup I'$ with an extra element $a$ if $I$ has a largest element. The order on $I\cup I'$ extend the order on  $I$, the set $I'$ is an antichain, every element $i$ of $I$ has a unique successor $i'\in I'$, except if $i$ is maximal in $I$, in which case it has  two, namely $i'$ and $a$. The label  function $w$ is $v$ (see the example given in Figure \ref {Examples2}).  

It is easy to construct uncountable  combs with no proper sibling. A rigid chain will do, but this is unnecessary.  Indeed,  Dushik and Miller \cite{dushnik-miller} (see also Chapter 9 of
Rosenstein \cite{rosenstein}, Theorem 9.6 page 151) showed that the real line $\R$ can be decomposed into two
disjoint dense subsets $E$ and $F$ such that $g(E) \cap F \neq
\emptyset$ and $g(F) \cap E \neq
\emptyset$ for any non-identity order preserving map $g:\R
\rightarrow \R$. Thus, let $C:=(\R, \leq, \ell)$ where $\ell(i):= (G_i, \chi_F(i))$  is made of the one vertex vertex graph and  $\chi_F(i)= 1$ if $i\in F$,  and $0$  if $i\notin F$. Then the corresponding comb has no proper embedding. 

 Instead of cographs, one could consider series-parallel posets, that is posets not embedding an $"N"$ or equivalently posets whose  comparability graph is a cograph. 
More generally, let $\mathcal C$ be a  hereditary class of finite binary structures containing only finitely many indecomposable structures. According to  unpublished results of Delhomm\'e \cite{delhomme} and G.Mckay \cite{mckay},  the collection of countable binary structures $R$ such that $\age (R)\subseteq \mathcal C$ is b.q.o. (even if we add labels). There is not much difference with the case of cographs; we have just to add the case of a lexicographical  sum indexed by a finite indecomposable structure.

\section{Appendix: cographs and labelled trees}

In this section,  we  prove the existence of  a one-to-one correspondence between cographs and ramified meet-trees densely valued by $\{0,1\}$ ( cf. Theorem \ref{thm:correspondence}). This result follows from Lemma 5.1 and  Proposition 5.4 of \cite{courcelle-delhomme} as follows.  In \cite{courcelle-delhomme} a labelled tree $mdec(G)$ is constructed which consists of the one element subsets of $G$,  and the robust modules of $G$ together with strong modules which are limit modules that are maximal proper strong submodules of a robust module. If $A$ is a robust module which is not a singleton, a label is assigned according to the structure of the quotient of $A$ by the family of maximal proper strong submodules of $A$.  For cographs only two labels arise according to whether the quotient is a complete graph, or an independent set.  From this tree one defines a graph on the "leaves", by assigning an edge between distinct elements $x$ and $y$ according to the label of the robust module that $x$ and $y$ determine.   Lemma 5.1 asserts that the original graph $G$ is recovered. Proposition 5.4 of \cite{courcelle-delhomme} asserts that the labeled tree $mdec(G)$ is uniquely determined by  $rdec(G)$, the labelled tree  of robust modules,  by a process of completion, essentially adding the limit strong modules that are maximal proper submodules of a non-singleton robust module.  Since their tree $rdec(G)$ is obtained from our tree of robust submodules ordered by reversing inclusion, changing join to meet, their results apply.  This establishes the result. We think that correspondence is simple and  important enough to justify a detailed presentation.

We  first put together some general properties of modules, and the modular decomposition of a graph.  In order that the Appendix may be used for other work, we present results in more generality, rather than provide statements and proofs strictly in the context of cographs.

\subsection{Modules} 

We  recall some basic ingredients of binary structures, alias $2$-structures. Most of it can be found in  \cite {ehrenfeucht1}. Let $W$ be  a set. A   \emph{binary structure over $W$} is a pair $\M:=(V, d)$ where $d$ is a map from $V\times V$ into  $W$; its \emph{restriction} to a subset $A$ of $V$ is $M_{\restriction A}:= (A, d_{\restriction A\times A})$. The value of $d$ on $\Delta_V:= \{(x,x)\in V\times V: x\in V\}$, the diagonal of $V$, plays no role in the notions involved below, and the reader could  suppose that it is constant. 

A subset $A$ of  $V$ is a \emph{module} of $M$ if $d(x,y)=d(x,y')$  and $d(y,x)=d(y',x)$ for every $x\in V\setminus A, y,y'\in A$. 
(other names are autonomous sets, or intervals). 

The whole set, the empty set  and the 
singletons are modules. These are the  \emph{trivial} modules.  A binary structure whose modules are trivial is 
 \emph{indecomposable}. If moreover it has more than two vertices it is  \emph{prime}.
   
 We recall  the basic and well-known properties of modules, under the form given in \cite{courcelle-delhomme}. 
\begin{lemma}\label{lem:intersection-union} Let $\M:=(V,d)$ be  a binary structure.
Then:
\begin{enumerate}
\item The intersection of a non-empty set of modules is a module (possibly empty). 
\item  The union of two modules that meet is a module, and more generally, the union of a set of modules is a module as
soon as the meeting relation on that set is connected. 
\item For two modules $A$ and $B$, if $B\setminus A$ is non-empty, then $A\setminus B$ is a module. 
\end{enumerate}
\end{lemma}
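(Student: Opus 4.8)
The plan is to prove each of the three standard properties of modules by direct verification from the definition, since these are elementary structural facts that follow almost immediately once one unwinds what it means for a set to be a module. Recall the definition: $A$ is a module of $\M := (V,d)$ precisely when, for all $x \in V\setminus A$ and all $y,y' \in A$, we have $d(x,y)=d(x,y')$ and $d(y,x)=d(y',x)$. The key observation is that this condition quantifies only over pairs with one foot inside $A$ and one foot outside $A$, so in each item I would focus carefully on the location (inside or outside the candidate module) of the two points being compared.

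For item $(1)$, let $A := \bigcap_{i\in I} A_i$ be the intersection of a non-empty family of modules. I would take $x \in V\setminus A$ and $y,y' \in A$. The point $y,y'$ lie in every $A_i$; if $x$ also lay in every $A_i$ then $x$ would lie in $A$, contradiction, so there is some index $j$ with $x \in V\setminus A_j$. Applying the module condition for $A_j$ to the triple $(x,y,y')$ immediately gives $d(x,y)=d(x,y')$ and symmetrically $d(y,x)=d(y',x)$, which is exactly what is required. For item $(2)$, first handle the union $A\cup B$ of two modules with $A\cap B \neq \emptyset$: take $x \in V\setminus(A\cup B)$ and $y,y' \in A\cup B$. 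Pick a witness $z \in A\cap B$. I would compare $d(x,y)$ with $d(x,z)$ and $d(x,z)$ with $d(x,y')$: if $y\in A$ then $y,z\in A$ and $x\notin A$ give $d(x,y)=d(x,z)$ via the module $A$; if $y\in B$ use $B$ instead, and likewise for $y'$. Chaining these equalities through the common value $d(x,z)$ yields $d(x,y)=d(x,y')$, and symmetrically on the other coordinate. The general statement for a family whose meeting relation is connected then follows by the usual connectedness argument: any two modules in the family are joined by a chain of pairwise-meeting modules, each successive union is a module by the two-module case, and one concludes that the union of the whole family is a module.

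For item $(3)$, suppose $A$ and $B$ are modules with $B\setminus A \neq \emptyset$, and set $C := A\setminus B$; the claim is that $C$ is a module. I would take $x \in V\setminus C$ and $y,y' \in C$, so $y,y' \in A$ and $y,y'\notin B$. If $x\notin A$, then since $y,y'\in A$ the module $A$ directly gives the two required equalities. The interesting case is $x\in A$ (hence $x\in A\cap B$ because $x\notin C = A\setminus B$ forces $x\in B$). Here I would bring in a fixed witness $w \in B\setminus A$. Since $w\notin A$ while $x,y,y'\in A$, the module $A$ gives $d(w,y)=d(w,x)=d(w,y')$ and similarly with arguments reversed; and since $y,y'\notin B$ while $x,w\in B$, the module $B$ relates the values $d(y,\cdot)$ and $d(y',\cdot)$ against points of $B$. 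Combining the constraints coming from $A$ (comparing across the external point $w$) with those coming from $B$ (comparing $y$ and $y'$ against the internal points $x$ and $w$) should pin down $d(x,y)=d(x,y')$ and $d(y,x)=d(y',x)$.

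The main obstacle I anticipate is item $(3)$, and specifically the subcase $x\in A\cap B$: here $x$ is external to the candidate module $C$ but internal to both $A$ and $B$, so neither module's defining condition applies directly to the triple $(x,y,y')$. The whole point of the hypothesis $B\setminus A\neq\emptyset$ is to supply an external witness $w$ through which one can route the comparison, and getting the bookkeeping right — using $A$ to equate values at $w$ with values at $x$, and using $B$ to equate values at $y$ and $y'$ — is the only step that requires genuine care. Items $(1)$ and $(2)$, by contrast, are routine once one notices that the relevant external point always lies outside at least one of the modules in play.
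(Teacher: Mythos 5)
Your proof is correct. Note that the paper itself does not prove this lemma at all: it is recalled as a ``basic and well-known'' fact, with a citation to Courcelle--Delhomm\'e, so there is no in-paper argument to compare against; your direct verification from the definition is exactly the standard one and fills this in correctly. In particular, your treatment of the delicate subcase of item $(3)$ --- $x\in A\cap B$, routed through a witness $w\in B\setminus A$ --- is right: module $B$ (with external points $y$, $y'$) gives $d(x,y)=d(w,y)$ and $d(x,y')=d(w,y')$, module $A$ (with external point $w$) gives $d(w,y)=d(w,y')$, and chaining yields $d(x,y)=d(x,y')$, with the symmetric chain for the reversed arguments. The only place worth tightening is the infinite-family case of item $(2)$: your induction on ``successive unions'' literally handles finite families, so you should add the one-line observation that the module condition only ever involves two points $y,y'$ of the union at a time, and any such pair lies in the union of a finite chain of pairwise-meeting members, which is a module by the finite case; since $x$ lies outside the whole union, it lies outside that finite union, and the condition follows. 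With that remark the argument is complete.
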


 A module is \emph{strong} if it is either comparable w.r.t. inclusion or disjoint from  every other module.

\begin{lemma} \label{strongmodule}
\begin{enumerate} \item The intersection of any set of strong modules is empty or is a strong module. 
\item The union of any non-empty directed set of strong modules is a strong module.
\end{enumerate}
\end{lemma}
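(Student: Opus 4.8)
The plan is to treat both items uniformly: first invoke Lemma \ref{lem:intersection-union} to see that the intersection (resp. the directed union) is again a module, and then verify strongness by testing it against an arbitrary module $B$, exploiting that each given strong module $S$ obeys the trichotomy ``$S \subseteq B$, or $B \subseteq S$, or $S \cap B = \emptyset$''.

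For item (1), let $\mathcal S$ be a set of strong modules and set $A := \bigcap \mathcal S$. By Lemma \ref{lem:intersection-union}(1), $A$ is a module; if $A = \emptyset$ there is nothing to prove, so I assume $A \neq \emptyset$ and fix an arbitrary module $B$. I would then split into three cases according to the trichotomy applied to the members of $\mathcal S$: if some $S \in \mathcal S$ is disjoint from $B$, then $A \subseteq S$ forces $A \cap B = \emptyset$; if some $S$ satisfies $S \subseteq B$, then $A \subseteq S \subseteq B$; and if neither occurs, then every $S \in \mathcal S$ must satisfy $B \subseteq S$, whence $B \subseteq \bigcap \mathcal S = A$. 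In each case $A$ is comparable with or disjoint from $B$, so $A$ is strong.

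For item (2), let $\mathcal D$ be a non-empty directed set of strong modules and $A := \bigcup \mathcal D$. After discarding empty members (which do not change the union), directedness makes the meeting relation on $\mathcal D$ connected---any two members lie in a common upper bound, which each of them meets---so Lemma \ref{lem:intersection-union}(2) gives that $A$ is a module. To show $A$ is strong, I fix a module $B$; if $A \cap B = \emptyset$ or $B \subseteq A$ there is nothing to do, so I assume $A \cap B \neq \emptyset$ and pick $b \in B \setminus A$. The key step is then: for an arbitrary $a \in A$, choose $D_a, D_x \in \mathcal D$ with $a \in D_a$ and $x \in D_x$ for some fixed $x \in A \cap B$, take a common upper bound $E \in \mathcal D$, and observe that $E$ meets $B$ (it contains $x$) while $b \notin E$ (since $E \subseteq A$). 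Strongness of $E$ forces $E \subseteq B$, hence $a \in D_a \subseteq E \subseteq B$; as $a$ was arbitrary, this yields $A \subseteq B$, so $A$ is comparable with $B$.

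I expect the only genuinely delicate point to be item (2): directedness is used twice, once to ensure the union is a module at all, and once in the choice of a common upper bound $E$ that simultaneously captures a generic element $a$ of $A$ and a witness $x \in A \cap B$. Everything else is the routine trichotomy bookkeeping, and the empty-module edge cases are harmless and can be dispatched in a line.
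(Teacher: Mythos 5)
Your proof is correct. For the record, the paper does not actually prove Lemma \ref{strongmodule}: it is stated among the recalled, well-known facts about modules of binary structures (in the spirit of \cite{courcelle-delhomme}), so there is no argument of the authors' to compare against, and your write-up supplies the standard one that was omitted. Both of the points you flag as delicate check out. In item (1), your case split is exhaustive and each branch lands in the required trichotomy for $A:=\bigcap\mathcal S$: if some $S\in\mathcal S$ is disjoint from $B$ then $A\cap B=\emptyset$; if some $S\subseteq B$ then $A\subseteq B$; otherwise strongness of each $S$ leaves only $B\subseteq S$ for every $S$, whence $B\subseteq A$. In item (2), directedness is used exactly where it must be: it makes the meeting relation on the (nonempty) members connected, so Lemma \ref{lem:intersection-union}(2) applies, and the common upper bound $E\in\mathcal D$ with $D_a\cup D_x\subseteq E$ meets $B$ at $x$ while missing $b\in B\setminus A$, so strongness of $E$ rules out both disjointness and $B\subseteq E$, forcing $E\subseteq B$ and hence $a\in B$. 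Two cosmetic remarks only: in (1) you tacitly take $\mathcal S\neq\emptyset$ (needed to invoke Lemma \ref{lem:intersection-union}(1); an empty family would give $V$, which is strong anyway), and in (2) the all-empty case gives $A=\emptyset$, which is trivially strong --- these are precisely the harmless edge cases you said could be dispatched in a line.
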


Let $A$ be a subset of $V$. Let  $S_{\M}(A)$ be the intersection of strong modules  of $M$ which contain $A$. We write
$S_{\M}(x,y)$ instead of $S_{\M}(\{x,y\})$. According to $(1)$ of Lemma \ref {strongmodule}, $S_{\M}(A)$ is a strong module. 

According to Courcelle, Delhomm\'e  2008 \cite{courcelle-delhomme}, a module $A$  is \emph{robust} if it  is either a singleton or the least strong module containing two distinct vertices; that is there are $x,y\in A$ such that $A$ is strong and every strong module containing $x$ and $y$ contains $A$. Alternatively, $A=S_{\M}(x,y)$ for some $x,y \in V$. 
 \begin{example}\label{prime-robust}
 If $A$ is a  module and $\M{\restriction A}$ is prime then $A$ is robust.
  \end{example}
 
%
%
%
%
%
   
\begin{definition} Let $A$ be a strong module. Let $x,y\in A$. We  set $x\equiv_A y$ if  either $x=y$ or there is a strong module containing $x$ and $y$ and properly contained in  $A$. 
\end{definition}

 \begin{lemma}\label{decomp-strong}The relation $\equiv_A$ is an equivalence relation on $A$ whose equivalence classes are strong modules. If $A$ has more than one element then there are at least two classes iff $A$ is robust. Furthermore, these classes are the maximal strong modules  properly included in $A$
 \end{lemma}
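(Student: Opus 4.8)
The plan is to verify the four assertions in turn, the common engine being the elementary fact that two strong modules which meet are comparable (immediate from the definition of \emph{strong}). First I would check that $\equiv_A$ is an equivalence relation. Reflexivity and symmetry are built into the definition, so only transitivity needs work. Suppose $x\equiv_A y$ and $y\equiv_A z$ with $x,y,z$ pairwise distinct (the degenerate cases are immediate), and choose strong modules $B,C\subsetneq A$ with $\{x,y\}\subseteq B$ and $\{y,z\}\subseteq C$. Since $y\in B\cap C$, the modules $B$ and $C$ meet, hence are comparable; say $B\subseteq C$. Then $C$ is a strong module properly contained in $A$ containing both $x$ and $z$, so $x\equiv_A z$.

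Next I would show that the classes are strong modules. Fix $x\in A$ and let $\mathcal S_x$ be the set of strong modules $B$ with $x\in B$ and $B\subsetneq A$. Every member of $\mathcal S_x$ contains $x$, so they meet pairwise and are therefore pairwise comparable; thus $\mathcal S_x$ is a chain, in particular a directed family. Unwinding the definition of $\equiv_A$, the class of $x$ equals $\{x\}$ if $\mathcal S_x=\emptyset$ and equals $\bigcup\mathcal S_x$ otherwise (a point $y\neq x$ satisfies $y\equiv_A x$ exactly when $y$ lies in some member of $\mathcal S_x$). In the first case the class is a singleton, which is a strong module; in the second it is a strong module by Lemma \ref{strongmodule}$(2)$, since a directed union of strong modules is strong. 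I expect this step to demand the most care, because it is here that the bookkeeping must cleanly distinguish the \emph{limit} situation in which $\bigcup\mathcal S_x$ exhausts $A$ (the single-class, non-robust case) from the proper case, and because the directedness hypothesis of Lemma \ref{strongmodule}$(2)$ must be extracted precisely from pairwise comparability.

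Finally, assuming $A$ has more than one element, I would settle the robustness criterion and the identification with maximal submodules together, as both rest on the computation above. If $A$ is robust, pick distinct $u,v$ with $A=S_{\M}(u,v)$, so that every strong module containing $u,v$ contains $A$; then no proper strong submodule of $A$ holds both, whence $u\not\equiv_A v$ and there are at least two classes. Conversely, if $x,y$ lie in distinct classes then $x\neq y$ and $x\not\equiv_A y$; the least strong module $S_{\M}(x,y)$ is contained in $A$ (as $A$ is itself a strong module containing $x,y$) and, were it proper, would witness $x\equiv_A y$, forcing $S_{\M}(x,y)=A$, so $A$ is robust.

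It remains to identify the classes in this two-class setting with the maximal proper strong submodules. Each class $F_x=\bigcup\mathcal S_x$ is a strong module properly contained in $A$ (proper because the classes partition $A$ into at least two pieces). It is maximal among such: if a strong module $B$ satisfies $F_x\subseteq B\subsetneq A$, then $x\in B$, so every $y\in B$ gives $y\equiv_A x$ and hence $B\subseteq F_x$, forcing $B=F_x$. Conversely, given any maximal proper strong submodule $B$ and a point $x\in B$, the same argument yields $B\subseteq F_x\subsetneq A$, whence $B=F_x$ by maximality. Thus the equivalence classes are exactly the maximal strong modules properly included in $A$, completing the proof.
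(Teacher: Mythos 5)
Your proof is correct, and its skeleton (transitivity via comparability of meeting strong modules, the robustness criterion via $S_{\M}(x,y)$, and the maximality argument) matches the paper's; the genuine difference is in the key step that the classes are strong. You exhibit the class of $x$ as $\{x\}\cup\bigcup\mathcal S_x$, where $\mathcal S_x$ is the chain of proper strong submodules of $A$ containing $x$, and invoke the directed-union lemma (Lemma \ref{strongmodule}(2)). The paper instead argues directly by contradiction: given a module $J$ overlapping a class $I$ improperly, it picks $a\in I\cap J$ and $b\in I\setminus J$ and shows that $S_{\M}(a,b)$, which lies inside $I$, would properly overlap $J$, contradicting its strongness. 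Notably, the paper itself remarks that strongness of the classes ``follows from (2) of Lemma \ref{lem:intersection-union}'' before giving its direct argument; that cited lemma concerns unions of mere modules and by itself only yields that the classes are modules, so your appeal to Lemma \ref{strongmodule}(2) (union of a directed family of \emph{strong} modules) is the precise way to carry out the route the paper only gestures at. Your version buys an explicit description of the classes that makes the identification with the maximal proper strong submodules immediate in both directions (you prove the converse inclusion that the paper leaves implicit), and it cleanly isolates the one-class limit case; the paper's direct argument has the compensating merit of being self-contained, using no closure property of strong modules under unions.
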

 \begin{proof} Since strong modules are disjoint or comparable, $\equiv_A$  is an equivalence relation. Suppose that $A$ has at least two elements. If $A$ is robust then there are two distinct elements such that $A=S_{\M}(x,y)$, hence $x\not \equiv_Ay$ proving that there are at least  two equivalence classes.  Conversely, if there are at least two classes, then pick $x$ in one and $y$ in another; since $x\not \equiv_Ay$ we have $S_{\M}(x,y)=A$. 

 Let $I$ be  an equivalence class. Suppose that there is some strong module $F$ such that $I \subseteq F\subset A$ with  $F$ strong. Then all members of  $F$ are $A$-equivalent. Since $I$ is an equivalence class,  $I=F$. This shows that  if $I$ is strong  then  it is maximal. The fact that  $I$ is strong follows from (2) of Lemma \ref{lem:intersection-union}, but can be obtained directly as follows. Let  $J$ be a module  which  meets  $I$.  We claim $J$ is comparable to $I$. Since $A$ is strong, we may suppose that $J\subseteq A$ and $J$ is incomparable to $I$. Let $a\in I\cap J$ et $b\in I\setminus J$.  Then $S_{\M}(\{a,b\})$, the least strong module containing $a$ and $b$,  which is necessarily contained in $I$, intersects  $J$ properly, contradicting the fact that it is strong. 
\end{proof}

We call  \emph{components} of $A$ the equivalence classes of the relation $\equiv_A$. 
Except if $A$ is finite, the components  need not   be robust.

 A notion equivalent to the notion of robust module was previously introduced by Kelly \cite{kelly} see also \cite {boussairi-al}. A module  $I$ is  \emph{non-limit} if it is strong and contains a non-empty strong module $J$ which is maximal among those contained in $I$ and distinct from $I$.   
 
 \begin{proposition}
Let $A$ be a  subset of $V$. Then   $A$ is a robust module with at least two elements  iff  $A$ is a non-limit module.   
\end{proposition}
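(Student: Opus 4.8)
The plan is to prove the two implications separately, leaning on the decomposition of a strong module into its components supplied by Lemma \ref{decomp-strong}. Note first that both sides of the equivalence automatically exclude singletons: a robust module with at least two elements is not a singleton, while a non-limit module must contain a \emph{non-empty} proper strong submodule and so has at least two elements.

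For the forward implication, suppose $A$ is a robust module with at least two elements. Then $A$ is strong, and by Lemma \ref{decomp-strong} the relation $\equiv_A$ has at least two equivalence classes, each of which is a non-empty strong module that is, moreover, maximal among the strong modules properly contained in $A$. Taking $J$ to be any one of these classes exhibits a non-empty strong module maximal among the proper strong submodules of $A$, which is precisely the definition of $A$ being non-limit. So this direction is essentially a direct reading of Lemma \ref{decomp-strong}.

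For the converse, suppose $A$ is non-limit, so $A$ is strong and there is a non-empty strong module $J$ with $J \subsetneq A$ that is maximal among the strong modules properly contained in $A$. Since $J$ is non-empty and $J \subsetneq A$, the module $A$ has at least two elements. It remains to show $A$ is robust, i.e. that $A = S_{\M}(a,b)$ for some pair of distinct vertices. I would pick $a \in J$ and $b \in A \setminus J$ (such a $b$ exists because $J \subsetneq A$) and consider the strong module $M := S_{\M}(a,b)$. Since $A$ is a strong module containing both $a$ and $b$, we have $M \subseteq A$. To finish, suppose for contradiction that $M \subsetneq A$. Because strong modules are pairwise comparable or disjoint and $M$ and $J$ share the element $a$, they are comparable; then $b \in M \setminus J$ forces a contradiction in each case: if $J \subseteq M$ then $J \subsetneq M \subsetneq A$ contradicts the maximality of $J$ among proper strong submodules, while $M \subseteq J$ contradicts $b \notin J$. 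Hence $M = A$, so $A = S_{\M}(a,b)$ is the least strong module containing the two distinct vertices $a$ and $b$, that is, $A$ is robust.

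The step I expect to require the most care is the case analysis in the converse establishing $S_{\M}(a,b) = A$: one must combine the defining disjoint-or-comparable property of strong modules with the maximality of $J$ to rule out any strong module lying strictly between $J$ and $A$. Everything else follows formally from the definitions of robust and non-limit modules together with Lemma \ref{decomp-strong}.
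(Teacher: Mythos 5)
Your proof is correct and takes essentially the same route as the paper: the forward direction reads off Lemma \ref{decomp-strong} (the components of a robust $A$ are maximal non-empty strong modules properly contained in $A$), and the converse forms $S_{\M}(a,b)$ with $a$ in the maximal proper strong submodule $J$ and $b\in A\setminus J$, using maximality of $J$ to force $S_{\M}(a,b)=A$. Your explicit comparable-or-disjoint case analysis merely spells out what the paper leaves implicit in the phrase ``Since $I$ is strong, $A'$ is a strong module properly containing $I$.''
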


\begin{proof}
Suppose that $A$ is  a non-limit module.  Let  $ x\in I\subset A$ with  $I$  maximal among the strong modules contained in $A$ and distinct from $A$. Let  $y \in A\setminus I$. Let  $A':=S_{\M}(x,y)$. Since $I$ is strong, $A'$ is a strong module properly containing  $I$. Due to the choice of  $I$,  it is equal to $A$ hence $A$ is robust.
Conversely, suppose that $A$ is robust with at least two elements.  Then the components of $A$, as defined above,  are the maximal non-empty strong modules properly contained  in $A$ and in particular $A$ is non-limit.  
\end{proof}

 If $A$ is a robust module with at least two elements of a binary structure $\M:= (V, d)$ then for two distinct components $I, J$ of $A$, the values $d(x,y)$ for $x\in I$ and $y\in J$ depends only upon $I$ and $J$. Hence, the binary structure on $A$ induces  a binary structure $\M/{\equiv_{A}}$ on the set $A/{\equiv_{A}}$ of components of $A$, called the \emph{Gallai quotient of $A$}, and $\M_{\restriction A}$ is the lexicographical sum of the $\M_{\restriction I}$'s indexed by $\M/{\equiv_{A}}$. This quotient $\M_{\restriction A/{\equiv A}}$ has a special structure: its  strong modules are trivial,  there are only  the empty set, the whole set ${A/\equiv}$ and the singletons.

 The  central result  of the decomposition theory of binary structures describes the structure of the Gallai quotient.  
It is  due to Gallai \cite{gallai} for finite graphs, to Ehrenfeucht and Rozenberg \cite{ehrenfeucht} for finite binary structures and to Harju and Rozenberg \cite {harju-rozenberg} for infinite binary structures (see also \cite{courcelle-delhomme} Corollary 4.4.)

 \begin{theorem} The strong modules of  a binary structure $\M$ are trivial iff either $\M$ is prime, or \emph{constant}, that is $d(x,y)= \alpha$ for all $x\not =y\in V$, or \emph{linear}, that is there are $\alpha \not =\beta$ such that $\{(x, y)\in V\times V: x\not =y, d(x,y)= \alpha \}$ is a linear (strict) order  and $\{(x, y)\in V\times V: x \not =y, d(x,y)= \beta \}$ is the opposite order.
\end{theorem}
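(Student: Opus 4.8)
The plan is to prove the structure theorem for binary structures whose strong modules are all trivial (the Gallai–Ehrenfeucht–Rozenberg–Harju description of the quotient). I would start with the easy direction: each of the three listed cases — prime, constant, linear — is readily checked to have only trivial strong modules. For the constant case, any subset is a module, so the only strong modules (those comparable to or disjoint from every module) are the trivial ones. For the linear case, a subset $A$ is a module iff it is a convex subset (an interval) for the induced linear order, and two intervals are always comparable or disjoint precisely when they are nested or separated, so the strong modules coincide with the whole set, singletons, and the empty set. The prime case is trivial by definition. This direction is routine.

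\medskip

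The substance is the forward direction: assuming every strong module of $\M := (V, d)$ is trivial, I would show $\M$ is prime, constant, or linear. The key reduction is that ``all strong modules trivial'' is a priori weaker than ``indecomposable'' (prime), since $\M$ could have many non-strong modules. So the dichotomy is exactly between the case where $\M$ \emph{is} indecomposable (giving prime, once $\vert V\vert > 2$, with the small cases folding into constant/linear) and the case where $\M$ has a nontrivial module but no nontrivial \emph{strong} module. The plan is to analyze the latter. Suppose $A$ is a nontrivial module that is not strong; then there is a module $B$ with $A$ and $B$ incomparable and meeting. Using Lemma \ref{lem:intersection-union}, the intersection $A \cap B$, the union $A \cup B$, and the differences $A \setminus B$, $B \setminus A$ are all modules, and by iterating overlap one generates a rich Boolean-type family of modules. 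The goal is to show these overlapping modules force a global homogeneity: either $d$ takes a single value off the diagonal (constant), or $d$ encodes a linear order (linear).

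\medskip

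Concretely, I would argue as follows. Fix two overlapping nontrivial modules $A, B$. Because $A$ is a module, the value $d(x,y)$ for $x \in B \setminus A$, $y \in A$ does not depend on $y$; because $B$ is a module, it does not depend on $x$ either once one also varies over $A \cap B$. Chasing these constraints across the four regions $A \cap B$, $A \setminus B$, $B \setminus A$, $V \setminus (A \cup B)$ pins down the pattern of values of $d$. The heart of the matter is a local trichotomy on three-element restrictions, transported globally by the overlapping-module relation, which is connected enough (by part $(2)$ of Lemma \ref{lem:intersection-union}) to propagate a single coherent pattern across all of $V$. If the pattern is symmetric one lands in the constant case; if it is asymmetric, the relation $\{(x,y) : d(x,y) = \alpha\}$ is shown to be transitive and total, hence a linear order, with the opposite value giving the reverse order.

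\medskip

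The main obstacle I anticipate is the infinite case: for finite $\M$ one can induct on $\vert V\vert$ and lean on the Gallai decomposition directly, but here $V$ may be infinite and the family of modules need not be well founded. The delicate point is ensuring the local value-pattern really does glue into a \emph{global} linear order without any compatibility failure at limits — that is, verifying totality and transitivity of the putative order relation everywhere, not just on the finitely many pieces visited by a single overlap. I expect to handle this by reducing every potential inconsistency to a restriction on three or four vertices and invoking that such a small restriction, together with the module conditions, already determines the value pattern; the connectedness in Lemma \ref{lem:intersection-union}$(2)$ then guarantees no two local determinations conflict. This is precisely where the cited references (Harju–Rozenberg for infinite binary structures, and Corollary 4.4 of \cite{courcelle-delhomme}) do the careful work, so I would either reproduce that argument or cite it.
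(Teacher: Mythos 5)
The first thing to say is that the paper contains no proof of this statement at all: it is quoted as the central known result of modular decomposition theory, attributed to Gallai \cite{gallai} for finite graphs, Ehrenfeucht--Rozenberg \cite{ehrenfeucht} for finite binary structures, and Harju--Rozenberg \cite{harju-rozenberg} for infinite ones, with a pointer to Corollary 4.4 of \cite{courcelle-delhomme}. So your closing move --- ``reproduce that argument or cite it'' --- is exactly the paper's own treatment, and to that extent your proposal is on par with the paper; your backward direction (prime/constant/linear have only trivial strong modules) is also correct and routine, as you say.

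However, if the sketch is meant as an actual proof of the forward direction, the gap sits precisely where the content of the theorem lives, and it is worth naming. First, the propagation step overstates what Lemma \ref{lem:intersection-union}(2) gives: that lemma says a union of a meeting-connected family of modules is a module; it does not by itself make any value pattern ``propagate across all of $V$.'' What is actually needed is this: take a maximal meeting-connected family $\mathcal{F}$ of nontrivial proper modules containing your overlapping pair $A$, $B$ (Zorn); its union $U$ is a module, and any module meeting $U$ but incomparable with it is necessarily nontrivial and proper and meets some member of $\mathcal{F}$, hence belongs to $\mathcal{F}$ by maximality, a contradiction --- so $U$ is a \emph{strong} module, and the hypothesis that all strong modules are trivial forces $U=V$. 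This is the only place the hypothesis enters the forward direction, and your sketch never invokes it beyond producing the first overlapping pair, so as written the ``global homogeneity'' is unsupported. Second, even granted $U=V$, the four-region analysis pins $d$ only on pairs in overlap position (one vertex in $A\setminus B$, the other in $B\setminus A$, etc.); for two vertices lying together inside $A\cap B$ nothing is yet determined, and one must iterate the construction inside intersections. That recursion terminates by induction on cardinality when $V$ is finite, but for infinite $V$ establishing totality and transitivity of the candidate linear order at \emph{every} pair is a genuine argument, not a compactness-style reduction to three- or four-element restrictions as you suggest. That is the part \cite{harju-rozenberg} and \cite{courcelle-delhomme} actually carry out, and it cannot be reconstructed from the sketch; so as a self-contained proof the proposal is incomplete, while as a citation it matches the paper.
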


We say that the \emph{type} of $A$, $t(A)$,  is prime  if $\M_{\restriction A/{\equiv}}$ is prime, otherwise its type is $\alpha$ if $\M_{\restriction A/{\equiv A}}$ is constant, and $\{\alpha, \beta\}$ if it is linear.

In the case of directed graphs, this yields:  

\begin{theorem}\label{thm:6.8} If $\M$ is a directed graph, every  robust module with at least two elements is the lexicographic sum of its components and the quotient is either a clique or an independent set or a chain or a prime graph. 
\end{theorem}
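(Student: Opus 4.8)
The plan is to reduce the statement to the general classification theorem stated just above for binary structures with only trivial strong modules, and then to read off what its three cases (prime, constant, linear) become for a directed graph. In fact all the structural work has already been done in the run-up to the statement; the only genuinely new content is the interpretation of the two-valued alphabet $W=\{0,1\}$ of a directed graph.

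First I would recall that $A$ is a robust module with at least two elements, so by Lemma \ref{decomp-strong} its $\equiv_A$-classes---the components of $A$---are precisely the maximal strong modules properly contained in $A$, and there are at least two of them. By the discussion immediately preceding the statement, for distinct components $I\neq J$ the values $d(x,y)$ with $x\in I$, $y\in J$ depend only on $I$ and $J$, so $\M_{\restriction A}$ is the lexicographic sum of the $\M_{\restriction I}$ indexed by the Gallai quotient $\M_{\restriction A}/{\equiv_A}$, and this quotient has only trivial strong modules. This already yields the lexicographic-sum half of the claim.

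Next I would apply the classification theorem for structures with trivial strong modules to the quotient $\M_{\restriction A}/{\equiv_A}$: it is prime, constant, or linear. It then remains only to translate the constant and linear cases into graph language, using that a directed graph has value set $W=\{0,1\}$. If the quotient is constant with value $1$, then every pair of distinct components is joined in both directions, so the quotient is a (symmetric) complete graph, i.e.\ a clique; if it is constant with value $0$, no two components are joined, so the quotient is an independent set. If the quotient is linear, there are $\alpha\neq\beta$ in $\{0,1\}$ with $\{(x,y):d(x,y)=\alpha\}$ a strict linear order and its reverse coded by $\beta$, which is exactly a transitive tournament, that is, a chain. The prime case needs no translation. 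These exhaust the three cases of the general theorem and deliver the announced alternative clique / independent set / chain / prime.

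The proof is thus essentially a dictionary entry layered on top of the general Gallai--Ehrenfeucht--Rozenberg--Harju--Rozenberg classification, and the single point that requires care is the last one: verifying that, over the two-letter alphabet of a directed graph, the \emph{constant} case genuinely splits into the two graph-theoretic possibilities (clique versus independent set, according to whether the repeated value codes an arc or its absence) while the \emph{linear} case collapses to the single possibility of a chain. I expect no real obstacle beyond keeping this translation precise.
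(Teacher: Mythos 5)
Your proposal is correct and follows exactly the route the paper intends: the paper states Theorem \ref{thm:6.8} with no separate proof, presenting it as the immediate specialization (``In the case of directed graphs, this yields'') of the preceding Gallai--Ehrenfeucht--Rozenberg--Harju--Rozenberg classification applied to the Gallai quotient, whose strong modules are trivial. Your write-up simply makes explicit the dictionary (constant $1$ gives a clique, constant $0$ an independent set, linear gives a chain) that the paper leaves implicit, and it does so accurately.
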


%
%
Let $X, Y$ be two disjoint non-empty subsets of $\M$. If $X, Y$ are two modules, the value $d(x,y)$ where $x\in X$ and  $y\in Y$ is independent of $X$ and $Y$, we will denote it by $d(X,Y)$. 

\begin{lemma}\label{lem:ad-hoc}
Let $X,Y$ be two disjoint non-empty modules of $\M$. If $Y$ is robust, non-trivial and $\{d(X,Y), d(Y,X)\} = t(Y)$ then $X\cup Y$ is not a module. \end{lemma}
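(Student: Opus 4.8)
The plan is to argue by contradiction: I assume that $X\cup Y$ is a module and manufacture a module $M$ that meets $Y$ but is incomparable with it, contradicting the fact that the robust module $Y$ is strong.

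First I would record the structure of $Y$. Since $Y$ is robust and non-trivial, Lemma \ref{decomp-strong} identifies its components, the equivalence classes of $\equiv_Y$, as the maximal strong modules properly contained in $Y$ and guarantees that there are at least two of them; moreover $Y$ is the lexicographic sum of these components $(I_l)_{l\in L}$ over the Gallai quotient $\M/{\equiv_Y}$. The hypothesis $\{d(X,Y),d(Y,X)\}=t(Y)$ forces $t(Y)$ to be a set of values, so by the structure of the Gallai quotient the type is either constant (type $\alpha$) or linear (type $\{\alpha,\beta\}$); in the prime case $t(Y)$ is not a value-set, the hypothesis cannot hold, and the statement is vacuous.

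Next I would build $M$ with $X\subseteq M$ and $\emptyset\neq M\cap Y\subsetneq Y$, splitting on the type. In the constant case the hypothesis reads $d(X,Y)=d(Y,X)=\alpha$, and I take $M:=X\cup I$ for a single component $I$. In the linear case I orient the components so that $d(I_l,I_{l'})=\alpha$ exactly when $l<l'$ and assume (without loss of generality) $d(X,Y)=\alpha$ and $d(Y,X)=\beta$; then, choosing $l_1<l_2$ in $L$ (possible since $\vert L\vert\geq 2$), I take $M:=X\cup D$ with $D:=\bigcup_{l\leq l_1}I_l$ a proper nonempty initial segment. In each case the verification that $M$ is a module is a routine check against a vertex $z\notin M$: when $z\in V\setminus(X\cup Y)$ it is handled by the assumption that $X\cup Y$ is a module, and when $z\in Y\setminus M$ it follows from the constant or linear values on the quotient together with the hypothesis that $d(X,Y),d(Y,X)$ realise those same values.

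Finally, since $X\subseteq M$ and $X\cap Y=\emptyset$ with $X\neq\emptyset$, we have $M\not\subseteq Y$; since $M\cap Y\subsetneq Y$ we have $Y\not\subseteq M$; and $M\cap Y\neq\emptyset$. Thus $M$ is incomparable with $Y$ yet not disjoint from it, contradicting that the robust (hence strong) module $Y$ is comparable to or disjoint from every module. I expect the main obstacle to be the linear case: one must not assume that $L$ has a least or greatest element, which is why $M$ is built from a proper initial segment $D$ rather than from a single extremal component, and one must match the orientation of the segment to which of $d(X,Y)$, $d(Y,X)$ equals $\alpha$ (a symmetric final-segment choice handling the reversed assignment).
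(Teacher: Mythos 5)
Your proof is correct and takes essentially the same route as the paper's: both assume $X\cup Y$ is a module, build from it a module of the form $X\cup Y'$ where $Y'$ is a proper non-empty union of components of $Y$ (an initial segment of the quotient order in the linear case, matched to $d(X,Y)=\alpha$), and then contradict the strongness of the robust module $Y$. The only cosmetic difference is that you split the constant and linear types into separate cases and note the prime case is vacuous, whereas the paper handles the two non-prime types in one uniform construction.
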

\begin{proof} Let  $\alpha:= d(X,Y)$ and $\beta:= d(Y,X)$. By hypothesis, we have $t(Y)= \{\alpha, \beta\}$. Let $L:= \{(p,q)\in Y/{\equiv_Y}: p\not = q \; \text{and} \; d(p,q)= \alpha\}$. If $\alpha \not = \beta$ this is a linear order, otherwise this is a complete graph or an independent set.  Since the quotient has at least two elements, we may divide it into two non-empty subsets, which in the case $\alpha\not = \beta$ are an   initial segment $I$ and a final segment $F$ w.r.t to this order. Let $Y'$ be the union of components of $Y$ which belong to $I$.  We claim that $X\cup Y'$ is a module whenever $X\cup Y$ is a module. Indeed, suppose that $X\cup Y$ is a module. Let  $x, x'\in X\cup Y'$ and $y\in V\setminus (X\cup Y')$.  We check that $d(x,y)=d(x',y)$ and $d(y,x)=d(y,x')$. If $y\not \in X\cup Y$ this holds since $X\cup Y$ is a module. Thus,  we may suppose $y\in Y\setminus Y'$. If $x,x'\in Y'$ this holds because due to our choice, $Y'$ is a module of $Y$. If $x,x'\in X$ this holds because $X$ is a module. Hence,  we may suppose $x\in X, x'\in Y'$. In this case, we have $d(x, y)=d(X, Y)= \alpha$ and $d(x', y)= \alpha$, hence $d(x,y)=d(x',y)$; similarly $d(y,x)=d(y,x')$, proving that $X\cup Y'$ is a module. But, this is impossible since it meets $Y$ properly   and $Y$ is strong. 
 \end{proof}
\begin{proposition}\label{lem:density} If two  robust modules $A$ and $B$ with at least two elements and such that  $B\subset A$ have the same non prime type $\{\alpha, \beta\}$ then there is a robust module $C$ with $B\subset C\subset A$ whose type is  distinct from  the type of $\{\alpha, \beta\}$.

\end{proposition}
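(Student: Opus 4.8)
The plan is to realise $C$ as a principal strong module $S_{\M}(x,y)$ with $x\in B$ and $y$ chosen just outside $B$, and to push its type off $\{\alpha,\beta\}$ by a purely local computation of the $d$-values joining $y$ to $B$.

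First I would pin down where $B$ sits inside $A$. Since $A$ is robust of non prime type it has at least two components, and by Lemma~\ref{decomp-strong} (the components of $A$ are the maximal strong modules properly contained in $A$) together with the defining property of $A=S_{\M}(a,b)$, a strong module properly contained in $A$ cannot meet two distinct components: if it met $K$ and $K'$ it would, being comparable to each, contain both, hence contain $\equiv_{A}$-inequivalent points and therefore all of $A=S_{\M}(a,a')$, a contradiction. As $B$ is strong and $B\subsetneq A$, there is a unique component $K_{0}$ of $A$ with $B\subseteq K_{0}$. Here $K_{0}$ is itself a module of $\M$, so for $z\notin K_{0}$ the values $d(z,B)$ and $d(z,y)$ agree for every $y\in K_{0}$; this lets me ignore the exterior of $K_{0}$ in all module computations.

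The core reduction is the claim that there is some $y\in K_{0}\setminus B$ with $\{d(y,B),d(B,y)\}\neq\{\alpha,\beta\}$. Granting it, set $C:=S_{\M}(x,y)$ for an $x\in B$. Since $x\equiv_{A}y$ (both lie in $K_{0}$) we have $C\subseteq K_{0}\subsetneq A$, and since $C$ is strong, contains $x\in B$ and contains $y\notin B$ it properly contains $B$; thus $B\subsetneq C\subsetneq A$ and $C$ is robust. To read off $t(C)$, note that $B$ lies in one component $K_{0}'$ of $C$ while $y$ lies in a distinct one $K_{1}'$ (they are separated because $C$ is the \emph{least} strong module containing $x$ and $y$, so $x\not\equiv_{C}y$). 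As $B\subseteq K_{0}'$ is a module, $d(K_{0}',K_{1}')=d(B,y)$ and $d(K_{1}',K_{0}')=d(y,B)$. If $t(C)$ were the linear type $\{\alpha,\beta\}$, the quotient $C/{\equiv_{C}}$ would realise between distinct components only the two opposite values $\alpha,\beta$, forcing $\{d(B,y),d(y,B)\}=\{\alpha,\beta\}$ against the choice of $y$. Hence $t(C)\neq\{\alpha,\beta\}$, which is the conclusion.

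The remaining, and main, difficulty is the claim itself, which I would attack by contradiction: suppose every $y\in K_{0}\setminus B$ satisfies $\{d(y,B),d(B,y)\}=\{\alpha,\beta\}$ (this also subsumes the degenerate possibility $B=K_{0}$, where one argues instead with a second component of $A$ in place of a point of $K_{0}\setminus B$). Writing $t(B)=\{\alpha,\beta\}$ with the $\alpha$-relation linearly ordering the components of $B$ (Theorem~\ref{thm:6.8}), every such $y$ is either \emph{above} $B$, meaning $d(B,y)=\alpha$, or \emph{below} $B$, meaning $d(B,y)=\beta$; so the points outside $B$ splice onto the internal chain of $B$ in exactly the pattern forbidden by Lemma~\ref{lem:ad-hoc}. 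The aim is then to exhibit a module of $\M$ overlapping $B$ incomparably — for example the union of a proper final segment of the chain of components of $B$ with the set of points lying above $B$ — contradicting that $B$ is strong; equivalently, to apply Lemma~\ref{lem:ad-hoc} with $Y=B$ and $X$ a module assembled from $K_{0}\setminus B$ whose two $d$-values to $B$ are $\{\alpha,\beta\}=t(B)$, so that $X\cup B$ is simultaneously a module and not one. I expect the genuine obstacle to lie precisely in producing this overlapping module: the mutual $d$-values between the ``above'' and ``below'' parts of $K_{0}\setminus B$ are not controlled by $B$ being a module, and because components need not be robust (limit modules occur) one cannot simply pass to a least strong module above $B$ and induct. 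The resolution should combine Lemma~\ref{lem:ad-hoc}, the linearity of the $\alpha$-order on the components of both $B$ and $A$, and a choice of segment designed so that the problematic comparisons across the ``above''/``below'' split never have to be evaluated.
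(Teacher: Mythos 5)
Your reduction machinery is sound as far as it goes: $B$ does lie in a single component $K_0$ of $A$, and if some $y\in K_0\setminus B$ with $\{d(y,B),d(B,y)\}\neq\{\alpha,\beta\}$ existed, then $C:=S_{\M}(x,y)$ with $x\in B$ would indeed be robust with $B\subset C\subset A$ and $t(C)\neq\{\alpha,\beta\}$. The trouble is the core claim itself: you leave its proof explicitly open (``I expect the genuine obstacle to lie precisely in producing this overlapping module''), and in the generality in which the proposition is stated it is actually \emph{false}, so no amount of work can close that gap along your route. Take $\M$ to be a tournament, so that $\{d(u,v),d(v,u)\}=\{\alpha,\beta\}$ with $\alpha\neq\beta$ for \emph{every} pair of distinct vertices: on $\{b_1,b_2,c_1,c_2,a\}$ put $b_1\to b_2$, let $B:=\{b_1,b_2\}$, let $C:=B\cup\{c_1,c_2\}$ have Gallai quotient the $3$-cycle $B\to c_1\to c_2\to B$, and let every vertex of $C$ beat $a$; set $A:=\M$. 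Then $A$ and $B$ are robust with the same linear type $\{\alpha,\beta\}$, the component of $A$ containing $B$ is $K_0=C$, and \emph{both} points of $K_0\setminus B$ satisfy $\{d(y,B),d(B,y)\}=\{\alpha,\beta\}$ --- yet the proposition holds, witnessed by $C$ itself, whose type is prime. So in the antisymmetric case a pointwise inspection of the pairs $\{d(y,B),d(B,y)\}$ can never locate the desired $C$: your core claim is not merely hard, it is the wrong target.

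The paper proceeds from a different, stronger contradiction hypothesis --- the negation of the conclusion itself: \emph{every} robust $C$ with $B\subset C\subset A$ has type $\{\alpha,\beta\}$. This bears on all of $A\setminus B$, not just $K_0$, and, applied to the robust modules $S_{\M}(x,y)$ with $y\in B$, it yields that $d(x,B)$ is a well-defined value in $\{\alpha,\beta\}$ for every $x\in A\setminus B$, giving a partition $A\setminus B=X_\alpha\cup X_\beta$. Crucially, the same hypothesis controls exactly the comparisons you flagged as uncontrollable, those between the ``above'' part and the ``below'' part: for $x\in X_\alpha$, $y\in X_\beta$, $z\in B$ one gets $d(x,z)=\alpha=d(z,y)$, and the linearity of the quotient of the intermediate robust module generated by $x,y,z$ (again of type $\{\alpha,\beta\}$ by the hypothesis) forces $d(x,y)=\alpha$. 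Hence $X_\alpha$, $X_\beta$ and $X_\alpha\cup B$ are all modules, and Lemma \ref{lem:ad-hoc}, applied with $X=X_\alpha$ and $Y=B$ (whose cross-values realize $t(B)$), contradicts the fact that $X_\alpha\cup B$ is a module. So the above/below comparisons are not ``never evaluated,'' as you hoped to arrange; they are computed, and the leverage to compute them comes precisely from negating the full conclusion rather than a pointwise surrogate of it. To salvage your plan you would have to replace your hypothesis by the paper's, at which point your argument becomes the paper's.
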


\begin{proof}
Suppose that this is not the case. That is $t(C)=\{\alpha, \beta\}$ for every  robust module $C$ with $B\subset C\subset A$. 
\begin{claim}\label {claimmodule1}
Let $x\in A\setminus B$ and $y,y'\in B$. Then $d(x,y)=d(x,y') \in \{\alpha, \beta \}$. 
\end{claim}
\noindent {\bf Proof of Claim \ref{claimmodule1}.}
Let $C$ be the least strong module containing $x, y$ and $y'$. We have $C= S_{\M} (x,y)=S_{\M} (x,y')$, hence $B\subseteq C \subseteq A$ thus $t(C)= \{\alpha, \beta\}$. Since $x$ and $y$ belong to two different components of $C$, $d(x,y)\in t(C)$ and $d(y,x)\in t(C)$; similarly $d(x,y'), d(y'x)\in t(C)$. Since $B$ is a module,  $d(x,y)=d(x,y')$ and $d(y,x)=d(y',x)$. The claim follows. 
\hfill $\Box$

Let $d(x, B):= d(x, y)$ where $y\in B$. Let $\gamma\in t(A)$ and $X_{\gamma}:= \{x\in A\setminus B: d(x, B)= \gamma\}$. 
Then, according to Claim \ref{claimmodule1},  $A= X_{\alpha}\cup X_{\beta} \cup B$.
\begin{claim}\label {claimmodule2}
$X_{\alpha}$ and $X_{\beta}$ are modules of $\M$. 
\end{claim}
\noindent {\bf Proof of Claim \ref{claimmodule2}.} Let $x, x'\in X_{\alpha}$ and $y\in V\setminus X_{\alpha}$. If $y\not \in A$ then since $A$ is a module of $\M$ we have $d(x,y)=d(x',y)$ and $d(y,x)=d(y,x')$ as required. If $y\in A$ then since $A= X_{\alpha}\cup X_{\beta} \cup B$, 
either $y\in B$ or $y\in X_{\beta}$ in which case  $\alpha\not = \beta$. If $y\in B$ then by definition of $X_{\alpha}$ we have $d(x,y)=d(x',y)= \alpha$ and hence $d(y,x)=d(y,x')=\beta$. If $y\in X_{\beta}$, pick $z\in B$. Since $x\in X_{\alpha}$ and $y\in X_{\beta}$ we have $d(x, z)=\alpha$ and $d(z, y)=\alpha$;  since $\alpha\not = \beta$, the Gallai quotient of $A$ is linear, hence $d(x, y)= \alpha$; similarly, $d(x',y)=\alpha$. Thus $X_{\alpha}$ is a module. The same holds for $X_{\beta}$.

Since $A= X_{\alpha}\cup X_{\beta} \cup B$ one of the sets $X_{\alpha}, X_{\beta}$ is non-empty. Suppose that this is $X_{\alpha}$.

\begin{claim}\label {claimmodule3} $X_{\alpha}\cup B$ is a module. 
\end{claim}
\noindent {\bf Proof of Claim \ref{claimmodule3}.}
If $\alpha=\beta$, $X_{\alpha}\cup B= A$ and  there is nothing to prove.  
Suppose $\alpha\not = \beta$. Let  $x, x'\in X_{\alpha}\cup B$ and $y\in V\setminus (X_{\alpha}\cup B)$.  We check that $d(x,y)=d(x',y)$ and $d(y,x)=d(y,x')$. If $y\not \in A$ this holds since $A$ is a module. Thus,  we may suppose $y\in A\setminus (X_{\alpha}\cup B)$, that is  $y\in X_{\beta}$. If $x, x'\in X_{\alpha}$ or $x,x'\in B$ these equalities holds since $X_{\alpha}$ and $B$ are modules.  Thus we may suppose $x \in  X_{\alpha}$ and $x'\in B$. In this case, we have $d(x, x')=d(x',y)= \alpha$ and since $L$ is linear,  $d(x, y)=\alpha=d(x',y)$;  by the same argument  we also have  $d(y, x)=d(y,x')=\beta$. This proves our claim.

\begin{claim}\label {claimmodule4} There is some non-empty proper subset $D$ of $B$ such that $X:=X_{\alpha}\cup D$ is a module. 
\end{claim}
\noindent {\bf Proof of Claim \ref{claimmodule4}.}
Case 1. $\alpha=\beta$. Let $D$ be a component of $B$. Since $B$ is non-trivial, $D$ is a proper subset of $B$. It is easy to check that  $X: = X_{\alpha}\cup D$ is a module. For that, pick $x,x'\in X$ and $y \in V\setminus X$. If $x\in X_{\alpha}, x'\in D$ we have $d(x, x')= \alpha$, hence, for every $y\in A$ we have $d(x, y)=d(x',y)=\alpha$; the equality $d(x, y)=d(x',y)$ holds trivially in all other cases. 

Case 2. $\alpha\not = \beta$. In this case, the Gallai quotient of $B$ is linear. The set $L:= \{(p,q)\in B/{\equiv_B}: p\not = q \; \text{and} \; d(p,q)= \alpha\}$ is a linear order. Since the quotient has at least two elements, we may divide it into a non-empty initial segment and a non-empty final segment w.r.t to this order. Let $D$ be the union of components of $B$ which belong to such an initial segment. As above one,  can check that $X$ is a module. 
\hfill $\Box$

This claim contradicts Lemma \ref{lem:ad-hoc}. 

\end{proof}
%
%
%
%

\subsection{Decomposition tree of cographs}

The presentation followed below is equivalent to that employed by Courcelle and Delhomme \cite{courcelle-delhomme}, except they use inclusion instead of reverse inclusion and find a join-lattice rather than a meet-lattice as we do.  

Once ordered by the reverse of inclusion, the collection of strong modules forms a tree. We prefer to consider a refinement of this tree made of robust modules. The collection of robust modules of a binary structure $\M:= (E, d)$, once ordered by the reverse of  inclusion, forms  a tree,  \emph{the decomposition tree of the $2$-structure}, see \cite{courcelle-delhomme} \cite{thomasse} for some use of this tree. We describe this tree in the case of cographs.

Let $P$ be a poset. We recall that $P$ is a  \emph{forest} if for every element $x\in P$ the initial segment $\downarrow x:= \{y\in P: y\leq x\}$ is a chain; this is a \emph{tree} if in addition every pair of elements has a lower bound. We say that $P$ is a \emph{meet-lattice} if every pair of elements $x, y \in P$ has a \emph{meet} that we denote by $x\wedge y$ (and which is the largest lower bound of $x$ and $y$). 

Let $T$ be a meet-tree. We observe that if an element $x$ of $T$ is the meet of a finite set $X$ of the maximal elements of T, denoted $\Max(T)$, then $x$ is the meet of a subset $X'$ of $X$ with at most two elements. 

We say that a meet-tree $T$ is \emph{ramified} if every element of $T$ is the meet of a finite set of maximal elements of $T$.

Let $T$ be a ramified meet-tree and $T':= T\setminus Max(T)$.  A $\{0,1\}$-valuation is a map $v: T' \rightarrow \{0,1\}$. The valuation is \emph{dense} if for every $a<b$ in $T'$ there is some $c$ with $a<c\leq b$ such that $v(c)\not = v(a)$. 

\begin{lemma}

Let $(T, v)$  be a densely valued ramified meet tree. Let $G:={\bf G}(T)$ be the graph with vertex set $V:= Max(T)$, two vertices $x$ and $y$ being joined by an edge if $v(x\wedge y)= 1$. Then $G$ is a cograph and ${\bf T}(G):= (R(G), v_G)$ is isomorphic to $(T, v)$. 

\end{lemma}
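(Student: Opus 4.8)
The plan is to treat the two assertions separately, both resting on the elementary \emph{tree property} of meets: for any three maximal elements $x,y,z$ of $T$, two of the three pairwise meets $x\wedge y,\ y\wedge z,\ x\wedge z$ coincide and lie below the third. This is immediate from the fact that each $\downarrow w$ is a chain (so $x\wedge y$ and $x\wedge z$ are comparable, and a short case check gives the conclusion). Since the edges of $G=\mathbf{G}(T)$ are read off from the value of $v$ at a single meet, and \emph{equal meets carry equal labels}, this property is exactly what couples the combinatorics of $G$ to the order of $T$.

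First I would show $G$ is a cograph. Suppose $a,b,c,d$ induce a $P_4$ with edges $ab,bc,cd$ and non-edges $ac,bd,ad$; writing $m_{xy}:=x\wedge y$ this reads $v(m_{ab})=v(m_{bc})=v(m_{cd})=1$ and $v(m_{ac})=v(m_{bd})=v(m_{ad})=0$. Applying the tree property to $a,b,c$, the two smallest of $m_{ab},m_{bc},m_{ac}$ are equal; as equal meets have equal labels while $v(m_{ac})\neq v(m_{ab})=v(m_{bc})$, the equal pair is $\{m_{ab},m_{bc}\}$, so $m_{ab}=m_{bc}=:p$ with $m_{ac}>p$. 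The triple $b,c,d$ gives likewise $m_{bc}=m_{cd}=p$ and $m_{bd}>p$. Now the triple $a,b,d$ must have two equal meets among $m_{ab}=p$, $m_{bd}>p$, $m_{ad}$: the pair $\{m_{ab},m_{bd}\}$ is excluded since $p<m_{bd}$, the pair $\{m_{ab},m_{ad}\}$ is excluded by differing labels, and $m_{bd}=m_{ad}$ forces these equal elements below the third $m_{ab}=p$, contradicting $m_{bd}>p$. Hence no induced $P_4$ exists.

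For the isomorphism $\mathbf{T}(G)\cong(T,v)$ I would use $\phi\colon t\mapsto U_t$, where $U_t:=\{x\in\Max(T):x\geq t\}$. The tree property shows each $U_t$ is a module: if $y\notin U_t$ and $x,x'\in U_t$ then $x\wedge y,\,x'\wedge y<t\leq x\wedge x'$, so by the tree property $x\wedge y=x'\wedge y$ and $y$ is adjacent to $x$ and $x'$ alike. Next, $\phi$ reverses the order of $T$ into reverse inclusion and is a bijection onto its image: if $s<t$ then $U_t\subsetneq U_s$ (a leaf above $t$ omits some leaf above $s$, which exists because $T$ is ramified), while incomparable $s,t$ give disjoint $U_s,U_t$, since a common leaf would force $s,t$ into one chain; this yields injectivity and that $\phi$ is an order isomorphism onto its image in $(R(G),\supseteq)$. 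For the labels, choosing $a,b$ in distinct components of $U_t$ one has $a\wedge b=t$, so $U_t=S_{\M}(a,b)$ and $v_G(U_t)=1$ iff $ab$ is an edge iff $v(t)=1$; since all cross-component pairs meet at $t$, this is independent of the chosen pair, giving $v_G\circ\phi=v$.

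The main obstacle is identifying $R(G)$ with the image of $\phi$: proving that for $t\in T'$ each $U_t$ is a \emph{strong} module and that $U_{a\wedge b}=S_{\M}(a,b)$, whence $\phi$ carries $T$ onto the robust modules (the singletons $U_x$ accounting for the maximal elements). This is precisely where \emph{density} of $v$ enters, and it cannot be dispensed with: if $v$ were constant on an interval of $T'$ an internal node would collapse — e.g. for $T$ a root and an internal node both labelled $1$ over three leaves one gets $G=K_3$, whose module $\{x,y\}=U_{\sigma}$ fails to be strong. I expect to prove strength by taking a module $N$ that meets $U_t$, with $U_t\not\subseteq N$ and $N\not\subseteq U_t$, extracting $p\in N\cap U_t$, $r\in N\setminus U_t$, $q\in U_t\setminus N$, so that $q\wedge p\geq t>q\wedge r$ and modularity forces $v(q\wedge p)=v(q\wedge r)$; then, using density below $t$ together with the fact that ramifiedness makes every node of $T'$ branch, I would manufacture a leaf $q'$ splitting from $t$ at some $c<t$ with $v(c)\neq v(t)$, yielding $v(q'\wedge p)=v(c)\neq v(t)=v(q'\wedge r)$ and hence a violation of modularity. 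The delicate part is ensuring the witness lies outside $N$ and packaging this uniformly (with the dual argument on the complementary side); once strength and surjectivity onto $R(G)$ are in hand, the order isomorphism and the identity $v_G\circ\phi=v$ from the previous paragraph complete the proof.
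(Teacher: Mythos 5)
Your proof that $G$ is $P_4$-free is complete and correct, and it is actually more self-contained than the paper, which never isolates the cograph claim but lets it follow from the block structure of robust modules; the module property of $U_t$ and the order-embedding part of $\phi$ are also fine. However, the two items you yourself flag as the core --- strongness of $U_t$ and the identity $U_{a\wedge b}=S_G(a,b)$ --- are where the proof really lives, and your treatment of them has genuine gaps. The strongness sketch fails as written: if $q'$ is a leaf with $q'\wedge t=c<t$ and $r\notin U_t$, then $q'\wedge r=\min(c,\,t\wedge r)$ (these two nodes both lie in the chain below $t$, hence are comparable), so $v(q'\wedge r)$ equals $v(c)$ or $v(t\wedge r)$; your asserted equality $v(q'\wedge r)=v(t)$ has no justification, and the displayed contradiction collapses. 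The repair, which is the paper's argument, runs as follows: modularity of $N$ (applied to $q\notin N$, $p,r\in N$) gives $v(b)=v(b')$ where $b:=q\wedge p\geq t$ and $b':=q\wedge r=b\wedge r=t\wedge r$; density must be applied to the pair $b'<b$ (not ``below $t$''), giving $c$ with $b'<c\leq b$ and $v(c)\neq v(b')=v(b)$ (if $c=b$ we are done at once); ramifiedness then yields a leaf $q'$ with $q'\wedge b=c$, hence $q'\wedge p=q'\wedge q=c$ and $q'\wedge r=b'$. Your ``delicate part'' --- ensuring $q'\notin N$ --- cannot be arranged and need not be: argue by cases. If $q'\notin N$, then $q'$ must see $p,r\in N$ alike, yet the labels are $v(c)\neq v(b')$; if $q'\in N$, then $q\notin N$ must see $q',p\in N$ alike, yet the labels are $v(c)\neq v(b)$. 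Either way the modularity of $N$ is violated.

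The second gap is larger: the identity $U_{a\wedge b}=S_G(a,b)$ is announced but never argued, and it does not follow from strongness, which gives only the inclusion $S_G(a,b)\subseteq U_{a\wedge b}$. Without the reverse inclusion you have an order embedding of $(T,v)$ into $(R(G),v_G)$, not surjectivity onto the robust modules, and your label computation (which presupposes $U_t=S_G(a,b)$) is also ungrounded. The paper closes this with a block argument you would need to reproduce: on $U_t$ declare $p\equiv q$ iff $p=q$ or $p\wedge q>t$; every equivalence class, and every union of classes, is a module of $G$ (vertices outside $U_t$ see $U_t$ uniformly because $U_t$ is a module, while vertices of $U_t$ in other classes meet every element of a given class exactly at $t$). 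A strong module $M$ containing $a$ and $b$ with $a\wedge b=t$ meets two distinct classes, hence, being strong, contains every class it meets, hence is a union of at least two classes; if some class $B'$ were disjoint from $M$, then $B_a\cup B'$ (with $B_a$ the class of $a$) would be a module overlapping $M$ properly, contradicting strongness. Hence $M=U_t$, and in particular $S_G(a,b)=U_{a\wedge b}$, which gives surjectivity and $v_G\circ\phi=v$. With these two repairs your argument coincides with the paper's proof.
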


\begin{proof}
Let $x,y\in V$.  Let $S_G(x,y)$ be the least strong module of $G$ containing $x$ and $y$, let $a:= x\wedge y$ and $B(x,y):= (\uparrow a) \cap V:= \{z\in V: z\geq x\wedge y\}$.

We prove that the following equality holds.

 \begin{equation}\label{eq:ball} S_G(x,y)=B(x,y). 
\end{equation}

\begin{equation}\label{eq:val}
v_G(S_G(x,y))= v(x\wedge y). 
\end{equation}

The fact that  $(R(G), v_G)$ and   $(T, v)$ are isomorphic follows.

\begin{claim} \label{claim:ball-module} $Z:= B(x,y)$ is a strong module of $G$.
\end{claim}

\noindent{\bf Proof of Claim \ref{claim:ball-module}.}
 We prove first that $Z$ is a module containing $x$ and $y$. Let $t\in V\setminus Z$. Let $a':= x\wedge y\wedge t$. We claim that $z\wedge t=a'$ for every $z\in Z$. Since $G(z,t)= c(z\wedge t)=c(a')$, if this   equality holds,  $G(z,t)$ is independent of $z$,  hence $Z$ is a module of $G$. To prove that this equality holds, let $z\in Z$. By definition, we have $x\wedge y\leq z$. From this inequality, we get  $a'=x\wedge y\wedge t\leq z\wedge t$. Since $x\wedge y\leq z$ and $z\wedge t\leq z$ and $T$ is a tree, $x\wedge y$ and $z\wedge t$ are comparable. We cannot have $x\wedge y \leq z\wedge t$ otherwise we would have $x\wedge y \leq t$ contradicting $t\not \in Z$. Thus,  we have $z\wedge t\leq x\wedge y$. Since $z\wedge t \leq t$ and $a'= x\wedge y\wedge t$ it follows that $z\wedge t \leq a'$. With the inequality $a'\leq z\wedge t$ obtained above, this gives $z\wedge t=a'$ as claimed. 
 
 Now we prove that $Z$ is a strong module. Suppose not. Let $I$ be a module which intersects  $Z$ properly. Let $x'\in Z\setminus I$, $y'\in  Z\cap I$ and $t\in I \setminus Z$. Let $Z':= B(x', y')$, $b:= x'\wedge y'$ and $b':= b\wedge t$. We have $Z'\subseteq Z$, hence $t \not\in Z'$. The set  $Z'$ is a module and $x'\wedge t=y'\wedge t$. Since $I$ is a module,  $G(x', y')= G(x', t)$ hence $v(b)=v(b')$. We have $b'< b$ hence according to Lemma \ref{lem:density} there is some element $c$ with $b'<c<b$ such that $v(b')\not = v(c)$. Since $T$ is ramified, there are two elements $x'',y''\in V$ such that $x''\wedge y''=c$. Let $Z'':= B(x'',y'')$. Then $Z\subset Z''$. Necessarily, $x''$ or $y''$ is not in $Z'$. Suppose that this is $y''$. In this case,  we have $x'\wedge y''=y'\wedge y''= x''\wedge y''= c$. If $y''\not \in I$  then, since $I$ is a module, we must have $v(y'\wedge y'')= v(t\wedge y'')$. This is impossible since  $y'\wedge y''=c$,  $t\wedge y''=b'$ and  $v(b')\not = v(c)$. Suppose that $y''\in I$ then since $I$ is a module we must have $v(x'\wedge y')= v(x'\wedge y'')$ which is impossible since $x'\wedge y'= b$,  $x'\wedge y''=c$ and  $v(b)= v(b')\not = v(c)$. Consequently, $I$ cannot intersect  $Z$ properly, proving that $Z$ is strong. 
 
 \hfill $\Box$

On $Z$ define the following  binary relation $\equiv_Z$:
\begin{equation}
p\equiv_Z q  \; \text{if}\;  p,q\in Z\; \text{and}\;  p=q\;  \text{or}\; p\wedge q\not = a. 
\end{equation}

\begin{claim}\label{claim:equivalence}
The relation $\equiv_Z$ is an equivalence relation whose blocks are strong modules. If $Z$ is not a singleton then there are at least two blocks and $G_{\restriction Z}$ is a lexicographical sum on these blocks indexed by a clique or an independent set.   
\end{claim}

{\bf Proof of Claim \ref{claim:equivalence}.} 
The relation $\equiv_Z$ is clearly reflexive and symmetric.  We check that it is transitive. 
Let $p,q,r\in Z$ such that $p\equiv q$ and $q\equiv r$. We may suppose that $p,q, r$ are pairwise distinct, hence  $p\wedge q>a$ and $q\wedge r>a$. Since $p\wedge q\leq q$ and $q\wedge r\leq q$ and $T$ is a tree, $p\wedge q\leq q$ and $q\wedge r\leq q$ are comparable, hence $a<Min \{p\wedge q,  q\wedge r\}\leq p\wedge r$ proving that $p \equiv_Z r$  is transitive. If $Z$ is not a singleton, then $x\not =y$ hence  $x\not \equiv_Z y$ and $x\wedge y =a$, hence,  there are at least  two blocks. Let $I$ be  a block. Pick $p\in I$, then $I= \bigcup_{q\in I} B(p, q)$. Since, according to Claim \ref{claim:ball-module}, each $B(p,q)$ is a strong module, $I$ is a strong module (as a union of strong modules with a common vertex, see Lemma \ref{strongmodule}). If $I$ and $J$ are two distinct blocks, let $p\in I$ and $q\in J$.  Since $p\wedge q=a$, $G(p,q)= v(a)$ hence $G_{\restriction Z}$ is a lexicographical sum on the  blocks indexed by a clique if $v(a)=1$ and indexed by an independent set if $v(a)=0$. 
\hfill $\Box$\\

\noindent{\bf Proof of Equation (\ref{eq:ball}).} If $x=y$, $B(x,y)=S_G(x,y)=\{x\}$. Suppose $x\not =y$. We have $S_G(x,y)\subseteq B(x,y)$. Indeed, by definition $S_G(x,y)$ is the least  strong module containing $x$ and $y$. According to Claim \ref{claim:ball-module},  $B(x,y)$ is a strong module. Since it contains  $x$ and $y$ it contains $S_G(x,y)$. Let us prove the converse. Set $Z:= B(x,y)$. Since each block of $\equiv_Z$ is a  module,   $S_G(x,y)$,  which is a strong module,  must be comparable to every block that it meets. Since it meets the block containing $x$ and the block containing  $y$,  it contains these two blocks. Due to that fact, it contains every other block that it meets. Hence $S_G(x,y)$ is a union of  at least two blocks. Since this is a strong module, it induces a strong module on the quotient. This quotient being a clique or an independent set, this strong module must be either  a singleton, that is $S_G(x,y)$  is a block, which is not the case, or the the whole set, in which case $S_G(x,y)= Z$ as claimed. \hfill $\Box$\\

\noindent{\bf Proof of Equation (\ref{eq:val}).} According to Equation (\ref{eq:ball}), we have $S_G(x,y)=B(x,y)$. We claim that the equivalence relations $\equiv_Z$ and $\equiv_a$ coincide. Indeed, let $p,q \in Z$. We have $S_G(p,q)= B(p,q)$. Hence, $p\equiv_ Zq$ amounting to $S_G(p,q)\not = Z$ is equivalent to $B(p,q)\not =Z$ amounting to $p\equiv_a q$.  It follows that  $v_G(S_G(x,y))= v(a)$ as claimed. 

\end{proof}

\begin{lemma}
Let $G$ be a cograph, $R(G)$ be the set of robust modules of $G$ ordered by reverse inclusion,  $R_{\geq 2}(G):=\{A\in R(G): \vert A\vert \geq 2 \}$ and $v_G:R_{\geq 2}(G)\rightarrow \{0,1\}$ defined by setting $v_G(A):= 0$ if the Gallai quotient of $A$ is an independent set and $v_G(A):=1$ otherwise. Then  ${\bf  T}(G):= (R(G), v_G)$ is a dense valued meet tree  and the graph ${\bf G}$ associated with ${\bf T} (G)$ is $G$. 

\end{lemma}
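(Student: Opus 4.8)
The plan is to establish the statement by verifying four ingredients in turn: that $(R(G),\subseteq^{-1})$ is a meet-tree, that it is ramified with maximal elements the singletons, that $v_G$ is a dense valuation, and finally that the reconstruction of the previous lemma returns $G$. For the tree and meet structure I would first recall that robust modules are strong, so by the comparability/disjointness property of strong modules (Lemma \ref{strongmodule}) any two robust modules sharing a point are comparable; hence for a fixed robust module the family of robust modules containing it forms a chain, which is exactly the forest condition for reverse inclusion. To produce meets, given robust $A,B$ I would treat the comparable case trivially (the meet is the larger module) and, in the disjoint case, pick $a\in A$, $b\in B$ and argue that $S_{\M}(a,b)$ is the least robust module containing $A\cup B$: it is strong and robust, and being strong and sharing $a$ with the strong module $A$ it is comparable to $A$; it cannot lie properly inside $A$ (that would force $b\in A$), so $A\subseteq S_{\M}(a,b)$, and symmetrically $B\subseteq S_{\M}(a,b)$. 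Since any robust module containing $A\cup B$ contains $a$ and $b$ and, being strong, then contains $S_{\M}(a,b)$, this is the meet. Thus $R(G)$ is a meet-tree.

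Next I would identify the maximal elements and check ramification. The minimal robust modules under inclusion (i.e.\ the maximal nodes of the tree) are exactly the singletons: every robust $A$ with $\vert A\vert\ge 2$ is $S_{\M}(x,y)$ for some distinct $x,y\in A$ and so strictly contains the robust singletons $\{x\},\{y\}$. Consequently every node with at least two elements is the meet $\{x\}\wedge\{y\}$ of two maximal nodes, while singletons are themselves maximal; hence $R(G)$ is ramified, and $\Max(R(G))$ is identified with $V$ via $x\mapsto\{x\}$.

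The density of $v_G$ is the step I expect to be the main obstacle, and it is where the hypotheses genuinely enter. Here I would first invoke the fact that a cograph contains no induced $P_4$, so no robust module has prime Gallai quotient; every non-trivial robust module is of clique type ($v_G=1$) or independent-set type ($v_G=0$), so $v_G$ is everywhere defined. Given robust $A\supsetneq B$ with $\vert A\vert,\vert B\vert\ge 2$, if $v_G(B)\ne v_G(A)$ I take $C=B$; otherwise $A$ and $B$ share a common non-prime type and Proposition \ref{lem:density} (its $\alpha=\beta$ case) furnishes a robust $C$ with $B\subsetneq C\subsetneq A$ and $v_G(C)\ne v_G(A)$. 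Translating reverse inclusion into the tree order, this is precisely an element $c$ with $a<c\le b$ and $v_G(c)\ne v_G(a)$, which is the density condition.

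Finally I would carry out the reconstruction. The associated graph ${\bf G}$ of $(R(G),v_G)$ has vertex set $\Max(R(G))$, identified with $V$, and joins $\{x\}$ to $\{y\}$ exactly when $v_G(\{x\}\wedge\{y\})=1$, that is when $v_G(S_{\M}(x,y))=1$. Since $S_{\M}(x,y)$ is the least strong module containing $x$ and $y$, these two vertices lie in distinct components of it, so the presence of the edge $\{x,y\}$ in $G$ is dictated by the Gallai quotient of $S_{\M}(x,y)$: an edge precisely when that quotient is a clique, i.e.\ precisely when $v_G(S_{\M}(x,y))=1$. Thus adjacency in ${\bf G}$ agrees with adjacency in $G$ on the bijective vertex sets, giving ${\bf G}=G$ and completing the argument.
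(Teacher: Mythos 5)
Your proposal is correct and follows essentially the same route as the paper's own proof: strong modules being pairwise comparable or disjoint gives the forest structure, $S_{\M}(x,y)$ realizes meets and ramification via singletons, density comes from Proposition \ref{lem:density} after ruling out prime Gallai quotients in a cograph, and reconstruction uses the clique/independent-set structure of the quotient (Theorem \ref{thm:6.8}). Your treatment is somewhat more detailed than the paper's (separating the comparable and disjoint cases for meets, and the two cases in the density argument), but the ideas and the key lemmas invoked are the same.
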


\begin{proof}

We prove successively:

$(1)$\emph{ The set $R(G)$ of robust modules of $G$ ordered by reverse inclusion  is a   meet-tree;   the maximal elements of 
$R(G)$ are the singletons of $V(G)$,  and thus $R(G)$ is ramified}. 

Since robust modules are strong, they are disjoint or comparable hence $R(G)$ is  a forest. Let $A, B \in T(G)$. Pick $x\in A$ and $y\in B$. Then $S_G(x,y)$ is the meet of $A$ and $B$ in $R(G)$ hence $R(G)$ is  a meet-tree. For each $x\in V:= V(G)$, $\{x\}\in R(G)$. Let $A\in R(G)$;  since $A$ contains $x, y$ such that  $A=S_G(x,y)$,  we have $A= \{x\}\wedge \{y\}$ in $R(G)$, hence $R(G)$ is ramified. 

$(2)$ \emph{$v_G$ is a dense valuation}. 

Since $G$ is a cograph, no  strong module can be prime; since $G$ is undirected  the type of a robust module is $\{0\}$ or $\{1\}$. Hence the valuation of a robust module is essentially its  type. The density property follows from 
Lemma \ref{lem:density}. 

 $(3)$ \emph{The graph ${\bf G}$ associated with ${\bf T} (G)$ is $G$}. 
 
 Let $x,y$ be two distinct vertices of  $G$ and $A$  be the least robust module containing $x$ and $y$. Since $G$ is a cograph, Theorem \ref{thm:6.8} asserts that $G_{\restriction A}$ is a lexicographic sum of its components and the quotient is a clique or an independent. Thus, $\{x,y\}$ is an edge iff $v_G(A)=1$. 
\end{proof}

From these two lemmas  we deduce:

\begin{theorem}\label{thm:correspondence}There is a one-to-one correspondence between cographs and ramified meet-trees densely valued  by $\{0, 1\}$. \end{theorem}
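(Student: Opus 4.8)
The plan is to assemble the theorem directly from the two preceding lemmas, which already carry out all the substantive work; what remains is only to verify that the two constructions are mutually inverse. First I would make the two maps explicit: the map $G \mapsto {\bf T}(G) = (R(G), v_G)$ sending a cograph to its decomposition tree, and the map $(T, v) \mapsto {\bf G}(T)$ sending a densely valued ramified meet-tree to the graph on its set of maximal elements. The second lemma guarantees both that ${\bf T}(G)$ is indeed a densely valued ramified meet-tree and that ${\bf G}({\bf T}(G)) = G$; the first lemma guarantees both that ${\bf G}(T)$ is a cograph and that ${\bf T}({\bf G}(T)) \cong (T, v)$.

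Next I would observe that each assignment respects isomorphism, so that both descend to well-defined maps between the isomorphism classes of cographs and the isomorphism classes of densely valued ramified meet-trees. Combining the two round-trip identities just recalled, these maps are inverse to one another, which is precisely the assertion that there is a one-to-one correspondence between the two collections.

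The only delicate point --- and the closest thing to an obstacle --- is the bookkeeping needed to see that the constructions respect isomorphism in both directions, so that the bijection is genuinely one between isomorphism classes and not merely between fixed representatives. Here I would note that the robust modules of $G$, their Gallai types, and the meet operation on $R(G)$ are all defined intrinsically from the abstract structure of $G$, so any isomorphism of cographs transports robust modules to robust modules and preserves their valuations, inducing an isomorphism of the associated trees; the converse direction follows symmetrically from the explicit recipe defining ${\bf G}(T)$. Once this is recorded, the theorem follows from the two lemmas with no further computation.
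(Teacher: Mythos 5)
Your proposal is correct and matches the paper's own argument: the paper likewise derives Theorem \ref{thm:correspondence} directly from the two preceding lemmas, which respectively show that ${\bf G}(T)$ is a cograph with ${\bf T}({\bf G}(T))\cong (T,v)$ and that ${\bf T}(G)$ is a densely valued ramified meet-tree with ${\bf G}({\bf T}(G))=G$. Your extra remark that both constructions are intrinsic and hence descend to isomorphism classes is a harmless (and welcome) elaboration of what the paper leaves implicit.
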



\begin{thebibliography}{99}
\bibitem{boudabbous} Y.~Boudabbous, Personnal communication, 2015. 
\bibitem{boudabbous-delhomme} Y.~Boudabbous, C.~Delhomm\'e, Prechains and self-duality, Discrete Math. 312 (2012) 1743-1765. 
\bibitem{Bo-Ta} A.~Bonato, C.~Tardif,  Mutually embeddable graphs and the tree alternative conjecture. J. Combin. Theory Ser. B 96 (2006), no. 6, 874--880.

\bibitem{Bo-al}A~Bonato, H.~Bruhn, R.~Diestel, P.~Spr\"ussel, Twins of rayless graphs. J. Combin. Theory Ser. B 101 (2011), no. 1, 60--65. 


\bibitem {boussairi-al} A.~Boussaïri,  P.~Ille, R.E.~Woodrow, Primitive bound of a $2$-structure. J. Comb. 7 (2016), no. 4, 543--594.

\bibitem{courcelle-delhomme}
B.~Courcelle, C.~Delhomm\'e,
The modular decomposition of countable graphs. Definition and
construction in monadic second-order logic, 
Theoretical Computer Science 394 (2008) 1-38.

\bibitem{delhomme} C.Delhomm\'e, Nicely BQO grounded categories and $2$-structures, preprint, 2014.

\bibitem{diestel} R.~Diestel, 
Graph theory. Fourth edition. Graduate Texts in Mathematics, 173. Springer, Heidelberg, 2010. xviii+437 pp.
\bibitem{dushnik-miller} B.~Dushnik and E.~W.~Miller, 
Concerning similarity transformations of linearly ordered
sets,  Bull. Amer. Math. Soc. 46, (1940).


\bibitem{ehrenfeucht} A.~Ehrenfeucht, G.~Rozenberg,  Theory of 2-structures. II. Representation through labeled tree families. Theoret. Comput. Sci. 70 (1990), no. 3, 305?342. 
\bibitem{ehrenfeucht1}
A.~Ehrenfeucht, T.~Harju, G.~Rozenberg,
 \newblock{\em The theory of 2-structures. A framework for decomposition and transformation of graphs.}
\newblock{\em World Scientific Publishing Co., Inc., River Edge, NJ, 1999.}



\bibitem{fraisse3}
R.~Fra\"{\i}ss\'{e}.
L'intervalle en th\'eorie des relations; ses g\'en\'eralisations; filtre intervallaire et cl\^oture d'une relation. 
\newblock Orders: description and roles. (L'Arbresle, 1982), 313--341, North-Holland Math. Stud., 99, North-Holland, Amsterdam, 1984.

\bibitem{fraisse}
R.~Fra\"{\i}ss\'{e}.
\newblock {\em Theory of relations}.
\newblock Second edition, North-Holland Publishing Co., Amsterdam, 2000.



\bibitem{gallai}
T.~Gallai.
Transitiv orientbare graphen.
\newblock Acta Math. Acad. Sci. Hungar. \textbf{18} (1967), 25--66 (English translation by F. Maffray and M. Preissmann in J.J. Ramirez-Alfonsin and B. Reed (Eds), Perfect graphs, Wiley 2001, pp.25--66.


\bibitem{harju-rozenberg} T.~Harju, G.~Rozenberg, Decomposition of infinite labeled 2-structures. Results and trends in theoretical computer science (Graz, 1994), 145--158, Lecture Notes in Comput. Sci., 812, Springer, Berlin, 1994. 


\bibitem{ille-woodrow1} P.~Ille and R.E.~Woodrow, Weakly Partitive Families on Infinite Sets, Contributions to Discrete Mathematics, Vol 4, Number 1, 2009  pp 54-79.

\bibitem{ille-woodrow2} P.~Ille and R.E.~Woodrow, Decomposition tree of a lexicographic product of binary structures,  Discrete Math 311 (2011) no 21 pp 2346-2358.




\bibitem{kelly}  D.~Kelly. \emph{Comparability graphs.} Graphs and Orders, I. Rival (ed), NATO ASI Series, Vol.147, D. Reidel, Dordrecht, 1985, pp. 3--40.


\bibitem{gagnon-hahn-woodrow} A.~Gagnon, G.~Hahn, R.~Woodrow, On self embedded graphs, Oct. 2019. 



\bibitem{laflamme-al}  C.Laflamme,  M.Pouzet, N.Sauer, I.Zaguia, Orthogonal countable ordinals, Discrete Math. 335(2014) 35-44.


\bibitem{LPW} C.~Laflamme, M.~Pouzet, R.~Woodrow, Equimorphy- The case of chains. Archive for Mathematical Logic. Arch. Math. Logic 56 (2017), no. 7-8, 811-829.

\bibitem{La-Po-Sa}C.~Laflamme, M.~Pouzet, N.~Sauer,  Invariant subsets of scattered trees. An application to the tree alternative property of Bonato and Tardif, Abhandlungen aus dem Mathematischen Seminar der Universit\"at Hamburg (Abh. Math. Sem. Univ. Hamburg) October 2017, Volume 87, Issue 2, pp 369-408. arXiv:1508.01123, 2015. 

\bibitem{Lafl-Pouz-Saue-Wood}C.~Laflamme, M.~Pouzet, N.~Sauer, R.~Woodrow, Siblings of an $\aleph_0$-categorical relational structure, 38 p, to appear in Contributions to Discrete Mathematics, 2019, arXiv:1811.04185, nov. 2018. 
 
 \bibitem{laver}R.~Laver, On Fra\"{\i}ss\'e's order type conjecture,  
Ann. of Math. (2)  (1971) 93 89--111.

\bibitem{laver2}R.~Laver, 
An order type decomposition theorem. 
Ann. of Math. (2) 98 (1973), 96--119. 


\bibitem{mckay} G.~Mckay, On better-quasi-orders of classes of partial orders, 2014, 32pp.  

\bibitem{mckay2} G.~Mckay, Better-quasi-orders: extensions and abstractions.
PhD thesis, 2015,  University of East Anglia.


\bibitem{nashwilliams}
C.St.J.A.~Nash-Williams, 
\newblock On well-quasi-ordering infinite trees, 
\newblock {\em Proc., Phil, Soc.}, {\bf 61} (1965), 697--720.


\bibitem{oudrar-pouzet} D.~Oudrar, M.~Pouzet, D\'ecomposition monomorphe des structures relationnelles et profil de classes h\'er\'editaires, sept. 2014, 7p. arXiv:1409.1432 Working document, April 2014. 

\bibitem{oudrar} D.~Oudrar, Sur l'\'enum\'eration de structures discr\`etes, une approche par la th\'eorie des relations,  Th\`ese de doctorat, Universit\'e d'Alger  USTHB  \`a Bab Ezzouar,  28 sept. 2015, 249p.,  arXiv:1604.05839.

\bibitem{pouzet70}Pouzet, Maurice Sur des conjectures de R.~Fra\"{\i}ss\'e. Publ. D\'ept. Math. (Lyon) 7 (1970), no. 3, 55--104.
\bibitem{sikaddour-pouzet}  M.~Pouzet, H.~Si-Kaddour, Isomorphy up to complementation, Journal of Combinatorics, Vol. 7, No. 2 (2016), pp. 285-305.


\bibitem{pouzet}M.~Pouzet, Equimorphy versus isomorphy, Nov 11, 2015, invited lecture to the  workshop on Homogeneous structures, Banff, Nov. 8-13, in honor of Norbert W. Sauer. 
\bibitem{pouzet-thiery} M.~Pouzet, N.~Thi\'ery, Some relational structures with polynomial growth and their associated algebras I. Quasi-polynomiality  of the profile, The Electronic J. of Combinatorics, 20(2) (2013), 35pp.


\bibitem{rosenstein} J.~G.~Rosenstein, Linear orderings, 
Pure and Applied Mathematics, 98. Academic Press, Inc. [Harcourt Brace
Jovanovich, Publishers], New York-London, 1982. 


\bibitem{shekarriz} M.H.~Shekarriz, Self-contained graphs, 18 pp, submitted, Nov. 2016.


\bibitem {sumner2} D.P.~Sumner, {\em Point determination in graphs}, Discrete Math.,
\textbf{5} (1973), 179--187.

\bibitem {sumner} D.P.~Sumner, {\em Graphs indecomposable with respect to the $X$-join}, Discrete Math.,
\textbf{6} (1973), 281--298.


\bibitem{thomasse} S.~Thomass\'e, On better-quasi-ordering countable series-parallel orders. Trans. Amer. Math. Soc. 352 (2000), no. 6, 2491--2505. 
\bibitem{thomasse1} S.~Thomass\'e, Conjectures on Countable Relations, circulating manuscript, 17p.  2000, and personal communication, November 2012. 


\bibitem{tyomkyn} M.~Tyomkyn, A proof of the rooted tree alternative conjecture, Discrete Math. 309 (2009) 5963-5967.
\end{thebibliography}
\end{document}